\documentclass[12pt]{iopart}

\usepackage[latin1]{inputenc}
\usepackage{iopams} 
\usepackage{amsthm}
\usepackage{lscape}
\usepackage{graphicx}
\usepackage{makeidx}
\usepackage{mathtools}
\usepackage[caption=false,font=footnotesize]{subfig}

\theoremstyle{plain}%
\newtheorem{thm}{Theorem}[section]
\newtheorem{lem}[thm]{Lemma}

\newtheorem*{cor}{Corollary}

\newtheorem{remark}{Remark}

\newcommand\norm[1]{\left\lVert#1\right\rVert}

\newcommand\expval[1]{\langle #1 \rangle}

\usepackage{xcolor}
\usepackage[ruled,vlined,boxed,commentsnumbered]{algorithm2e}

\usepackage[nomessages]{fp}%

\usepackage{scalerel,stackengine}
\stackMath
\newcommand\reallywidehat[1]{%
\savestack{\tmpbox}{\stretchto{%
  \scaleto{%
    \scalerel*[\widthof{\ensuremath{#1}}]{\kern.1pt\mathchar"0362\kern.1pt}%
    {\rule{0ex}{\textheight}}%
  }{\textheight}%
}{2.4ex}}%
\stackon[-6.9pt]{#1}{\tmpbox}%
}

\begin{document}

\title[Non-Stationary Multi-layered Gaussian Priors for Bayesian Inversion]{Non-Stationary Multi-layered Gaussian Priors for Bayesian Inversion}

\author{Muhammad Emzir$^{1)}$, Sari Lasanen$^{2)}$, Zenith Purisha$^{1)}$, Lassi Roininen$^{3)}$, Simo S\"arkk\"a$^{1)}$}
\address{1) Department of Electrical Engineering and Automation, Aalto University, P.O. Box 12200, FI-00076 Aalto, Finland}
\address{2) Sodankyl\"a Geophysical Observatory, University of Oulu, P.O. Box 8000, FI-90014 University of Oulu, Finland}
\address{3) School of Engineering Science, Lappeenranta-Lahti University of Technology, P.O. Box 20, FI-53851 Lappeenranta, Finland}

\ead{muhammad.emzir@aalto.fi}
\vspace{10pt}
\begin{indented}
\item[]March 2020
\end{indented}
\begin{abstract}
In this article, we study Bayesian inverse problems with multi-layered Gaussian priors. We first describe the conditionally Gaussian layers in terms of a system of stochastic partial differential equations. We build the computational inference method using a finite-dimensional Galerkin method. We show that the proposed approximation has a convergence-in-probability property to the solution of the original multi-layered model. We then carry out Bayesian inference using the preconditioned Crank--Nicolson algorithm which is modified to work with multi-layered Gaussian fields. We show via numerical experiments in signal deconvolution and computerized X-ray tomography problems that the proposed method can offer both smoothing and edge preservation at the same time.
\end{abstract}
\section{Introduction}
\label{sec:intro}

The Bayesian approach provides a consistent framework to obtain solutions of inverse problems. By formulating the unknown as a random variable, the degree of information that is available can be encoded as a statistical prior. The ill-posedness of the problem is mitigated by reformulating the inverse problem as a well-posed extension in the space of probability distributions \cite{Kaipio2004}. Among statistical priors that are commonly used in Bayesian inverse problem is the Gaussian prior which is relatively easy to manipulate, has a simple structure, and also has a close relation with traditional Tikhonov regularization. This approach has also get a growing interest from a machine learning community, where the use of Gaussian prior for Bayesian inference is known as Gaussian process regression \cite{CarlEdwardRasmussen2005}.

When the unknown is a multivariate function, it is natural to model it as a random field. There is a vast amount of studies on Gaussian random fields and their applications where the random field is assumed to be stationary \cite{Heaton2018}. Stationary Gaussian fields have uniform spatial behavior. As a result, stationary Gaussian fields fail in the cases where the smoothness of the target varies spatially in unexpected ways \cite{Paciorek2004,Fuglstad2015}.  To model variable spatial behaviour, the covariance structure needs to be tuned appropriately. There have been a number of proposals to increase flexibility of the non-stationary Gaussian fields. One of the earliest strategies is to construct an anisotropic variant of an isotropic covariance function \cite{Paciorek2003,Paciorek2004,snelson2004warped}. Another approach is to reformulate the fields as stochastic partial differential equations and let some of the coefficients vary in space \cite{Lindgren2011}. 
In \cite{Roininen2016}, a similar idea is used, where instead of a predetermined length-scale function, a random field is used. They explicitly choose the Gaussian fields to have Mat\`{e}rn covariance functions and varying their length-scale according to another Gaussian field. This approach is recently extended to allow some flexibility in measurement noise model and hyperprior parameters \cite{Monterrubio_G_mez_2020}. A different approach is used in \cite{Damianou2013} where the model is formed as a cascaded composition of Gaussian fields (see also \cite{Duvenaud2014}).

There are some recent findings analyzing how adding more layers translate to the ability of the overall hierarchical Gaussian field to describe random fields with complex structures. It has been demonstrated in \cite{Duvenaud2014} that as the number of layers increases, the density of the last Gaussian field shrinks to a one-dimensional manifold. This might prevent cascaded Gaussian fields to model phenomena where the underlying dimension is greater than one. Ergodicity and effective depth of a hierarchical Gaussian fields has also been analyzed in \cite{Dunlop2018}. The consequence of their result is that, there might be only little benefit in increasing number of layers after reaching certain number.
\begin{figure}[!h]
\centering
	\includegraphics[width=0.9\textwidth,trim={0cm 0cm 0cm 0cm},clip]{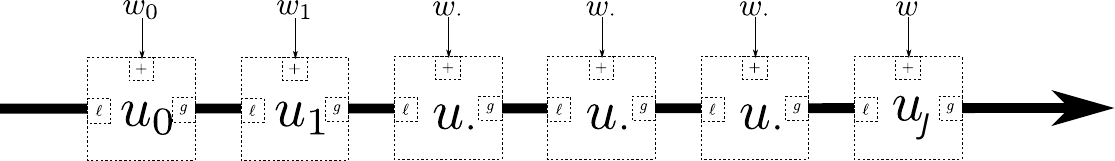} 
	\caption{Illustration of a chain of Gaussian fields. Each node (field) has an independent white noise input field and a length-scale parameter $\ell$. The length-scale $\ell$ is obtained as a function  evaluated on the fields above it. These Gaussian fields are approximated in finite-dimensional Hilbert space using $\mathbf{L}(\mathbf{u}_{j-1})\mathbf{u}_j = \mathbf{w}_j$, see \eref{eqn:prior}.
	}\label{fig:Lu_relations}
\end{figure}

In many spatio-temporal inverse problems, it is often useful to start by  working in an infinite-dimensional space. There are essentially two approaches to construct a Bayesian inference algorithm. The first approach is to discretize the forward map (e.g., via grid partitions or finite element meshes) and apply a Bayesian inference method  to the finite-dimensional setting \cite{Kaipio2004}. The second approach \cite{Stuart2010,Dashti2017} is to directly apply Bayesian inference to the infinite-dimensional problem, and afterward, apply a discretization method. The latter approach is possible through realizing that the posterior and prior probability distribution can be related via the Radon--Nikodym derivative which can be generalized to function spaces \cite{Stuart2010}.

For any of the aforementioned approaches, the sampling technique has to be carefully designed. The traditional Markov chain Monte Carlo (MCMC) algorithms suffer from slow mixing times upon the grid partitioning refinement \cite{Cotter2013}. These methods dictate to reduce the MCMC step size which becomes computationally expensive. There are several MCMC algorithms designed specifically to deal with infinite-dimensional problems so that mixing time will, up to some extent, be almost independent of the dimensionality \cite{Cotter2013,Law2014,Beskos2017,Rudolf2018}. %
Among these algorithms, the preconditioned Crank-Nicolson (pCN) \cite{Cotter2013} is a very simple one to be implemented. The main idea of this algorithm is to design a random walk such that the discretization of this random walk is invariant for the target measure which can be designed to be the posterior measure. Assuming that the target measure has a density with respect to a Gaussian reference measure, the pCN algorithm takes advantage of a clever selection of the Markov transition kernel. It has been shown in \cite{Hairer2014} via spectral gap analysis that pCN has dimension-independent sampling efficiency. This benefit comes in contrast to the standard random walk proposal where the probability of the proposal acceptance will be almost zero in infinite-dimensional case. Recently, a non-centered version of this algorithm was introduced in \cite{Chen2018}. Their work was developed using a non-centered reparametrization developed in \cite{Papaspiliopoulos2007}. 
This transformation is important in the hierarchical prior Bayesian inversion cases since it breaks the dependency between parameters in different levels which simplifies the calculation of posterior. There are also a generalization of the pCN algorithm to take into account the information of the measure and an adaptive version of it \cite{Rudolf2018,Hu_2017}.

The main contribution of this article is to present a method for Bayesian inverse problems with multi-layered Gaussian field prior models via a Galerkin method. The motivation is to have enough complexity in the model to allow for both smoothing and edge preserving properties while keeping relatively low number of layers at the same time. In particular, we follow the approach recently described in \cite{Roininen2016}, where the Gaussian fields are represented as stochastic partial differential equations (SPDEs), in which the length-scale parameters depend on the solution of SPDEs for the layer above. However, instead of using a grid partition, we will apply Galerkin method, on which we have already obtained preliminary results in \cite{Emzir2019}. %
Using this approach, we can avoid evaluating SPDE forward problem via finite difference equations, and the number of parameters to be evaluated is greatly reduced. This can be considered as a compromise between the accuracy and the computational complexity. Our approach is also related to the one described in \cite{Solin2019}, where Gaussian fields with stationary covariance functions are approximated in finite-dimensional Hilbert spaces. Such approach relies on the fact that the covariance function of a stationary Gaussian field is expandable via Mercer's theorem. %
In this work, we construct several Gaussian fields in a hierarchical structure. Using this structure, we show how to transform this model into a chain of Gaussian fields driven by white noise fields. This formulation enable us to use the Bayesian inference framework described in \cite{Dunlop2018}. We then implement the proposed approximation into an MCMC algorithm. 
Our MCMC algorithm is based on the non-centered version of the preconditioned Crank--Nicolson algorithm modified to work with multi-layered Gaussian fields \cite{Dunlop2018,Chen2018}.

Since translating the inverse problem into finite-dimensional formulation will introduce a discretization problem that could exacerbate the reconstruction errors \cite{Lassas2004,Kaipio2007}, we rigorously show that the proposed approximation enjoys a nice convergence-in-probability property to the weak solution of the forward model. To show the convergence result, we start by establishing an upper bound for the square root of the precision operator of the Gaussian field. From here, we develop another upper bound for the error between this operator and its finite-dimensional approximation. We then show that the approximate solution satisfies a H\"older continuity property. Using these results and additional tightness conditions, we finally show that the proposed approximation converges in probability to the original weak solution of the forward model. As a consequence, we can guarantee that the approximated prior and posterior probability distribution converges weakly to the original prior and posterior, respectively.

In this article, we also present an application of the proposed method to a computerized X-ray tomography problem. Computerized tomography problems are very challenging since they are ill-posed \cite{Natterer2001}. One way is to compute maximum a posteriori estimate for a general X-ray tomography problem using a Gaussian prior in finite-dimensional setting \cite{Tarantola2005}. In \cite{Li2013}, a Bayesian method using Gaussian fields with non-stationary covariance functions described in \cite{Paciorek2004,Plagemann2008} is developed for plasma fusion and soft X-ray tomography. Recently in \cite{Purisha2019}, a Hilbert space approximation technique described in \cite{Solin2019} is used in a sparse tomographic inverse problem. Bayesian tomographic reconstruction with non-Gaussian prior has been studied in \cite{Frese2002,Chen2011}. There are also some recent results in X-ray tomography using  deep learning methods. However, contrary to the Bayesian and regularization approaches, these methods are prone to instabilities when exposed to a small perturbation and structural changes \cite{Antun_2020}.

Previously, we have presented a subset of our contributions in \cite{Emzir2019}. In the present work, we extend the methods presented in \cite{Emzir2019} to Bayesian inverse problems and we have also added a throughout convergence analysis of the methods. The algorithm presented in this work also generalizes the algorithm presented in \cite{Emzir2019} to the case of multiple hyperprior layers. 

The outline of the article is the following. In Section \ref{sec:Hilbert}, we present a formulation of Bayesian inverse problems using multi-layered Gaussian priors via Galerkin method. The convergence analysis is presented in Section~\ref{sec:Convergence}.
In Section \ref{sec:Sampling}, we propose a Markov chain Monte Carlo algorithm to sample the Fourier coefficients from their posterior distribution. In Section \ref{sec:Application}, we present an application of the proposed method to a one-dimensional example model and to a tomographic inverse problem. Finally, Section ~\ref{sec:conclusions} concludes the article.

\subsection{Notation}
Let $\left\{\phi_l \right\}$ be a basis formed from the orthogonal eigenfunctions of the Laplace operator with respect to some domain $\Omega$ with suitable boundary conditions. Let $N$ be the number of basis functions $\left\{\phi_l \right\}$ used in the Galerkin method and let $H_N$ denote their span. For multi-layer Gaussian field priors, we use $u_j$ to denote the random field at layer $j$, where the total number of layer is $J+1$, that is, $J$ is the number of hyperprior layers. The collection of $J+1$ random fields $\left(u_0,u_1,\ldots,u_{J} \right)$ is denoted by $u$. The Fourier transform of any random field $z$ is denoted by $\widehat{z}$, where the Fourier coefficient for index $l$ is given by $\widehat{z}(l)$, that is, $\widehat{z}(l) = \langle z, \phi_l\rangle$, where $\langle \cdot , \cdot \rangle$ is the standard $L^2$ inner product on the domain $\Omega$. We use bold characters to denote vectors or matrices with elements in $\mathbb{C}$ or $\mathbb{R}$. Fourier coefficient for random field $u_j$ for index $-N$ to $N$ is given by $\mathbf{u}_j$, that is, $\mathbf{u}_j = \left(\widehat{u}_j(-N) \dots \widehat{u}_j(N)\right)$. We denote $J+1$ collections of the Fourier coefficients of random fields $u$ as $\mathbf{u} = \left( \mathbf{u}_0,\ldots,\mathbf{u}_J \right)$.
The identity operator is denoted with $I$.

\section{Finite-dimensional approximations}

\label{sec:Hilbert}
Consider a Bayesian inverse problem on a Gaussian field where the unknown is a real-valued random field $\upsilon(\mathbf{x}): \Omega \rightarrow \mathbb{R}$ on a  bounded domain $\Omega \subset \mathbb{R}^d$. We assume in the inverse problem that $\upsilon$ belongs to a Hilbert space $H$, specifically, $\upsilon \in H := L^2(\Omega)$. To carry out the Bayesian inference, a set of measurements is taken (either direct or indirect in multiple locations). In this article, the measurement is assumed to be a linear operation on $\upsilon$ corrupted with additive noises, that is, $y_k = \expval{\upsilon,h_k}  + e_k$, where $h_k$ is an element in $H$ represents a real linear functional on $H$, and $e_k$ is a zero-mean white noise with a covariance matrix $\mathbf E$.

In the Bayesian framework, the estimation problem is equivalent to exploring the posterior distribution of $\upsilon$ given the measurements $\{y_k\}$. The Bayesian inversion approach for this problem starts with assuming that $\upsilon$ is a Gaussian field with a certain mean (assumed zero for simplicity) and covariance function $C(\mathbf{x},\mathbf{x}')$.
In the case when $C(\mathbf{x},\mathbf{x}')$ is a Mat\'ern covariance function, the Gaussian field $\upsilon$ can be generated from a stochastic partial differential equation of the form \cite{Lindgren2011,Roininen2016}
\begin{equation}
\left(1-\ell^2 \Delta \right)^{\alpha/2} \, \upsilon(\mathbf{x}) = \sqrt{\beta \ell ^d} \, w(\mathbf{x}), \label{eq:SPDE_Stationary}
\end{equation}
where $\alpha = \nu+d/2$, $d$ is the dimension of the space, $\nu$ is a smoothness parameter, $w(\mathbf{x})$ is a white noise on $\mathbb{R}^d$, $\ell$ is the length-scale constant of the Mat\'ern covariance function $C$, and $\beta = \sigma^2 2^d \pi^{d/2} \Gamma(\alpha)/\Gamma(\nu)$ with $\sigma^2$ being a scale parameter.

To obtain a non-stationary field, we modify SPDE \eref{eq:SPDE_Stationary} so that the length-scale $\ell$ is modeled via another Gaussian field $u$ with Mat\`{e}rn covariance function. Namely, we select $\ell(\mathbf{x}) = g(u(\mathbf{x}))$, where $g$ is a smooth  positive function $g : \mathbb{R} \rightarrow \mathbb{R}_+$. As in \cite{Roininen2016}, we also require that $\ell$ should satisfy $\sup_{\mathbf{x}\in \Omega} \ell(\mathbf{x}) < \infty$ and $ \inf_{\mathbf{x}\in \Omega} \ell(\mathbf{x})>0$ with probability one. 
For notational and mathematical convenience we restrict $\alpha=2$. The results of this article could also be extended to other cases.
Introducing the spatially varying length-scale $\ell(\mathbf{x})=g(u(\mathbf{x}))$ into \eref{eq:SPDE_Stationary}, and since the length-scale is always greater than zero, with probability one, using $\kappa = 1/\ell$ we obtain the following SPDE %

\begin{equation}
	\left(\kappa(u(\mathbf{x}))^{2} - \Delta \right) \, \upsilon(\mathbf{x}) =  \, \sqrt{\beta} \kappa(u(\mathbf{x}))^{\nu} w(\mathbf{x}). \label{eq:SPDE_Nstationary_kappa}
\end{equation}

In order to facilitate easy operation with the Laplace operator, we choose to expand $v$ using the eigenfunctions of the Laplacian. With a suitable boundary condition, the Laplace operator can be expressed $-\Delta \upsilon = \sum_{j=-\infty}^{\infty} \lambda_j \expval{\upsilon,\phi_j} \phi_j$, 
where ${\phi_j}$ is a complete set of orthonormal eigenfunctions of $\Delta$
and $\lambda_j>0$, where $\lim\limits_{j \rightarrow \infty} \lambda_j = \infty$ \cite{Evans2014}. Observe that, in the sense of \eqref{eq:SPDE_Nstationary_kappa}, $\kappa(u(\mathbf{x})) := 1/g(u(\mathbf{x})) \in L^\infty(\Omega)$ is a multiplication operator acting pointwise, that is, $(\kappa(u) \upsilon)(\mathbf{x}) = \kappa(u(\mathbf{x}))\upsilon(\mathbf{x}), \forall \mathbf{x} \in \Omega$ \cite{Lasanen2018}.  %

In what follows, we will first describe the matrix representation of a chain of Gaussian fields generated from SPDEs in the form of \eref{eq:SPDE_Nstationary_kappa} for $d$-dimensional domain. Then in Section \ref{sec:Convergence}, we will develop a convergence result of the Galerkin method developed here to the weak solution of \eref{eq:SPDE_Nstationary_kappa}.

\subsection{Matrix representation}

Let us examine a periodic boundary condition on $d$-dimensional box $\Omega$ with side length $1$. Within this boundary condition, it is useful to consider $H$ as a complex Hilbert space, so that we can set $\phi_l = \exp(i\;c_d\;\mathbf{x}^\top \mathbf{k}(l))$ as Fourier complex basis functions for $d$ dimensions, for some constant $c_d$ and a multi-index $\mathbf{k}(l)$ which is unique for every $l$. Let the finite-dimensional Hilbert subspace $H_N$ of $H$ be the span of ${\phi_{-N},\ldots,\phi_0,\ldots,\phi_{N}}$.
In what follows, we will explicitly construct the multi-index $\mathbf{k}(\cdot)$.

Without losing generality, let us assume that every entry in $\mathbf{k}(l)$ is between $-n$ to $n$. 
For any $-N \leq l\leq N$, where $N = \frac{(2n+1)^d-1}{2}$, we would like to construct $\mathbf{k}(l) =  (k_1(l), \ldots,k_d(l))$ such that it is unique for each $-N\leq l \leq N$ and $\mathbf{k}(l+m) = \mathbf{k}(l)+\mathbf{k}(m)$, $-N\leq l,m \leq N$ and $\max(|{k_r(l+m)}|) < n, r\leq d$. %
The construction of $\mathbf{k}: [-N,N] \rightarrow [-n,n]^d$ is as follows. Let the matrix $\mathbf{K}(n)$ be given as
\begin{align*}
	\mathbf{K}(n) = 
   \begin{pmatrix}
   \mathbf{z}_n \otimes \mathbf{e}_n^{\otimes d-1} \\  \mathbf{e}_n \otimes z_n \otimes \mathbf{e}_n^{\otimes d-2} \\ \vdots \\ \mathbf{e}_n^{\otimes d-1} \otimes \mathbf{z}_n
   \end{pmatrix},
\end{align*}
where $\mathbf{z}_n = (-n,-n+1,\ldots,n-1,n)$, $\mathbf{e}_n = (1,1,\ldots,1,1)$, and $\mathbf{e}_n^{\otimes d}$ is a Kronecker product of $\mathbf{e}_n$ repeated for $d$ times.
The multi index $\mathbf{k}(l)$ is given by selecting $l+(N+1)$-th column of $\mathbf{K}$. The linear relation is defined only if $-N\leq l+m \leq N$ and every element of the summation $\mathbf{k}(l)+\mathbf{k}(m)$ has values in $[-n,n]$. As an example let $d=2$ and $n=1$. This gives us:
\begin{align*}
	\mathbf{K}(1) = 
	\begin{pmatrix}
	-1&-1&-1&0&0&0&1&1&1\\
	-1&0&1&-1&0&1&-1&0&1
	\end{pmatrix}.	
\end{align*}

It can be verified that $\mathbf{k}(l)$ selected this way is both unique and linear, given that the summation result is inside the range. For example, $\mathbf{k}(1)+\mathbf{k}(2) = \mathbf{k}(3)$. However, $\mathbf{k}(1)+\mathbf{k}(1)$ is not defined since the summation is outside the limit. In the following, if the context is clear, we will also use the multiple index $\mathbf{k}(l)$ for Fourier component of $u$, that is $\widehat{u}(\mathbf{k}(l)) := \widehat{u}(l)$.

Since we have no information outside of the frequencies of interest, under the periodic boundary condition, for $-N\leq l,m \leq N$, $\expval{\phi_m u \;,\phi_l} = \widehat{u}(l-m)$, when $\max(|\mathbf{k}(l) - \mathbf{k}(m)|) \leq n$, and zero elsewhere.

Let us denote with $\mathbf{u}$, $\mathbf{v}$, and $\mathbf{w}$ finite-dimensional representations of $u$, $\upsilon$, and $w$, respectively. Let $M_N(u)$ be the matrix representation of the multiplication operator $u(\mathbf{x})$ on $H_N$. For a random field $r$ with Fourier coefficients $\mathbf{r} = (\widehat{r}(-m) \cdots \widehat{r}(m))^\top \in \mathbb{C}^{2m+1}$, let us write a Toeplitz matrix $T\in \mathbb{C}^{(m+1)\times (m+1)}$ with elements from $\mathbf{r}$ as follows:
\begin{align*}
	T(\mathbf{r}) = 
	\begin{pmatrix}
		\widehat{r}(0)& \cdots & \widehat{r}(-m)\\
		\vdots & \ddots & \vdots\\
		\widehat{r}(m)& \cdots & \widehat{r}(0)
	\end{pmatrix}.
\end{align*}
The previous discussion allows us to write for $d=1$, $M_N(u)  := T(\tilde{\mathbf{u}}) \in \mathbb{C}^{(2N+1) \times (2N+1)}$, 
where $\tilde{\mathbf{u}} = (\mathbf{0}_{1\times N} , \mathbf{u}^\top , \mathbf{0}_{1\times N})^\top \in \mathbb{C}^{(4N+1) \times 1}$. It is also possible to construct $M_N(u)$ for $d>1$. However, instead of working directly on $\mathbf{u}$, we need to work on the frequency indices. Let $\mathbf{J} \in \mathbb{R}^{(2n+1)\times (2n+1)}$ be a square matrix where all of its entry equal to one. Let also $\mathbf{Z}^{(1)} = T(\mathbf{z}_{2n}) \otimes \mathbf{J}^{\otimes d-1},\mathbf{Z}^{(2)} = \mathbf{J} \otimes T(\mathbf{z}_{2n}) \otimes \mathbf{J}^{\otimes d-2},\ldots, \mathbf{Z}^{(d)} = \mathbf{J}^{\otimes d-1} \otimes T(\mathbf{z}_{2n})$, respectively. 
Using these matrices, the $(l,m)$-th entry of $M_N(u)$ is given by 
\begin{align}
 M_N(u)_{l,m} = \widehat{u}(\tilde{\mathbf{k}}(l,m)),  \label{eq:Mn_d_more_than_1} 
\end{align}
where $\tilde{\mathbf{k}}(l,m) = \left(\mathbf{Z}^{(1)}_{l,m},\cdots,\mathbf{Z}^{(d)}_{l,m}\right)$. In this equation, we assign $\widehat{u}(\tilde{\mathbf{k}}(l,m)) = 0 $ when $\max(|\tilde{\mathbf{k}}(l,m)|) > n$.

The sparsity of $M_N$ as $n$ approaches infinity is $(\frac{3}{4})^d$. %
The weak solution to \eref{eq:SPDE_Nstationary_kappa} in the span of $H_N$ is equivalent to the following equation,
\begin{align}\label{eqn:prior}
\mathbf{L}(u)\mathbf{v} &=  \mathbf{w}, 
\end{align}
where $\mathbf{L}(u):= \frac{1}{\sqrt{\beta}}( M_N(\kappa(u)^{d/2})-M_N(\kappa(u)^{-\nu}) \mathbf{D})$ is the square root of the precision operator corresponds to $\upsilon$ in matrix form, $\mathbf{D}$ is a diagonal matrix, and $\mathbf{v},\mathbf{w}$ are complex vectors with appropriate dimensions. The diagonal entries of $\mathbf{D}$ are given by $\mathbf{D}_{i,i} = -\lambda_i$. Upon computing $M_N(\kappa(u)^\gamma)$, we approximate $u$ by its projection onto $H_N$ if $u \notin H_N$. An important numerical issue to note is that we cannot use an approximation of $M_N(\kappa(u^N)^\gamma)$ obtained by spectral decomposition, that is, $M_N(\kappa(u^N)^\gamma) \approx \mathbf{U}^\top \kappa(\mathbf{D}_u)^\gamma \mathbf{U}$, for a diagonal matrix $\mathbf{D}_u$ with the diagonal entries are the eigenvalues of $M_N(u_N)$, and $\mathbf{U}$ is the orthonormal matrix. The reason is that the resulting matrix will not be in the form of \eqref{eq:Mn_d_more_than_1}. Instead, we could obtain $M_N(\kappa(u^N)^\gamma)$ matrix by applying Fourier transform directly to $\kappa(u^N)^\gamma$ and make use of \eqref{eq:Mn_d_more_than_1}. %
Writing \eqref{eqn:prior} as $\mathbf{v} = \mathbf{L}(u)^{-1}\mathbf{w}$, we obtain a composition of a Gaussian field from a unit Gaussian field given in \cite{Dunlop2018}. 

In what follows, for simplicity, with a slight abuse of notation for $\mathbf{L}(u)$, if $r \in H_N$ we also use $\mathbf{L}(\mathbf{r}) := \mathbf{L}(\sum_{l=-N}^{N}\widehat{r}(l)\phi_l) = \mathbf{L}(r)$.
Using this notation, the $J$ Gaussian field hyperpriors with zero mean assumption can be written in the following form:
\begin{align}
	\mathbf{L}(\mathbf{u}_{j-1}) \mathbf{u}_{j} &= \mathbf{w}_{j}. \label{eq:L_u_i_matrix}
\end{align}

The hyperprior layers consist of the random fields $u_0,\ldots, u_{J-1}$, where the random fields $u_0$ will be stationary. Within this multilayer hyperprior setting, the unknown field $\upsilon$ is equivalent to $u_J$.
With the assumption that each random field involved is real-valued, the number of element in $\mathbf{u}_{i}$ is only $N+1$ since the remaining element can be obtained by complex conjugation.
\section{Convergence analysis}\label{sec:Convergence}

We will show that  the solution of the original SPDE system  can be approximated with  Galerkin methods on $d$-dimensional torus $\mathbb{T}^d$. In our convergence analysis, we will restrict to $d\leq 3$, since we will rely on continuity of the sample paths (see Lemma \ref{lem:Hoelder}).  We start by carefully presenting the notation of  Galerkin method for analysis purposes. We  denote the constant $\kappa_0$ with 
 $\exp(u_{-1})$  for notational ease. The function $\kappa$ is taken to be a  smooth function, which is bounded from below and above  by  exponential functions. That is, 
 $c_1\exp(- a_1| x|)\leq \kappa(x),\kappa'(x)\leq c_2\exp(a_2 x)$ for some $c_1,c_2,a_1,a_2>0$ for all $x\in\mathbb R$.
   
Almost surely bounded functions  $u_{0},..., u_{J}$  are  a  weak solution of SPDE system
\begin{equation}\label{eqn:spde1}
 -\Delta u_{i} +  \kappa^{2}(u_{i-1})
u_{i} =  \beta_{i}^{1/2}\kappa^\nu (u_{i-1}) w_{i}, \; \text{ where }i=0,\dots, J, 
\end{equation}
if they satisfy 
\begin{equation} \nonumber
 	- \langle u_{i},  \Delta \phi\rangle   
	+  \langle \kappa^{2}(u_{i-1}) u_{i}, \phi\rangle  =  \beta^{1/2}_i  \sum_{p=-\infty }^\infty \widehat{w}_i(p) \langle  \kappa^{\nu}(u_{i-1})
	\phi_p ,  \phi\rangle,  \text{ where }  i=0,\ldots,J,
\end{equation}
 for all $\phi\in C^2(\mathbb T^d)$. Assuming that the functions $u_i$ are almost surely bounded guarantees that the inner product of $u_i$ and $\Delta \phi$ is well-defined as compared to  the inner product of $\nabla u_i$ and $\nabla \phi$, which  may not be well-defined. Here we  expressed  independent white noises $w_i$  on $\mathbb T^d$  with the help of an orthonormal  basis $\{\phi_p\}$ in $L^2(\mathbb T^d)$ as
random series  $w_{i}=\sum_{p=-\infty }^\infty \widehat{w}_i(p) \phi_p$, where the random coefficients  $\widehat{w}_i(p) \sim N(0,1)$ are independent.

In the first approximation step for the SPDE system \eref{eqn:spde1}, we will  approximate white noise  $w_{i}$  with its projections  $w^N_{i}(\mathbf{x})$ onto the subspace $H_N$.   That is, $
    w_i^N(\mathbf{x})=  \sum_{p=-N}^N  w_{i} \phi_p(\mathbf{x}).
  $ %
Then the Galerkin approximations $u_{i}^N$ of $u_{i}$ satisfy the system 
\begin{equation} 	\label{eq:WeakSolution_U0_N}
\langle \nabla u_{i}^N, \nabla  \phi\rangle 
	+  \langle \kappa^{2}(u_{i-1}^N)) u^N_{i}, \phi\rangle  =   \beta^{1/2}_i  \langle  \kappa^{\nu}(u^N_{i-1}) w^N_{i},\phi\rangle,  \; i = 0, \dots, J
\end{equation}
for all $\phi\in H_N$, where $u_{-1}^N(\mathbf{x}):= \ln (\kappa_0)$. Here we are allowed to use to use inner products of $\nabla u_i^{N}$ and $\nabla \phi$, since the approximated white noise $w^N_i$ belongs to 
$H_N$.
  
 We will denote with $L(u_{i-1})^{-1}$ the solution operator, which maps 
 $f$ from the negatively indexed Sobolev space $H^{-1}(\mathbb T^d)$  to the weak solution of
 $-\Delta u  +\kappa^2(u_{i-1}) u  = \beta^{1/2}_i\kappa^\nu (u_{i-1}) f$. 
Similarly, we will denote with $L_N(u_{i-1}^N)^{-1}$ the solution operator, which maps 
 $f\in H^{-1}(\mathbb T^d) $ to the Galerkin approximation $u^N\in H_N$ of the equation
$-\Delta u+\kappa^2(u_{i-1}^N)u = \beta^{1/2}_i \kappa^\nu(u_{i-1}^N)  f$. The matrix form of $L_N(u_{i-1}^N)^{-1}$ is given by $\mathbf L(\mathbf u_{i-1})^{-1}$ from Equation \eqref{eq:L_u_i_matrix}.
The solution operators  $L(u_{i-1})^{-1}$ and 
$L_N(u_{i-1}^N )^{-1}$ satisfy the following elementary norm estimates.
For simplicity, we will take $\beta_i=1$ from now on. 
\begin{lem}\label{lem:norm}
Let $u_{i-1}$ and $u_{i-1}^N$ be bounded functions and let  $\kappa$ be a  positive continuous function.
The mappings $L(u_{i-1}): L^2(\mathbb T^d) \rightarrow 
H^2(\mathbb T^d) $ and  $L_N(u_{i-1}^N): L^2(\mathbb T^d) \rightarrow 
H^2(\mathbb T^d) $ satisfy norm estimates
\begin{align*}
&\Vert L(u_{i-1})^{-1} \Vert_{L^2, H^2}
\leq  C \Vert \kappa(u_{i-1})\Vert_\infty ^\nu  \frac{\max(1,\Vert \kappa^2(u_{i-1})\Vert_\infty)}{ 
\min(1,\inf_{\mathbf x}\kappa^2(u_{i-1}(\mathbf{x})))^2}  \text{ and }\\
&\Vert L_N(u_{i-1}^N)^{-1}  \Vert_{L^2, H^2}
\leq  C \Vert \kappa(u_{i-1}^N )\Vert_\infty ^\nu  \frac{\max(1,\Vert \kappa^2(u_{i-1}^N )\Vert_\infty)}{ 
\min(1,\inf_{\mathbf x}\kappa^2(u_{i-1}^N (\mathbf{x})))^2} ,  \end{align*}
respectively.
\end{lem}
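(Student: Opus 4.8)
The plan is a textbook elliptic-regularity bootstrap, carried out in parallel for the exact solution operator and its Galerkin truncation. Fix the layer index $i$, write $\kappa := \kappa(u_{i-1})$, and set $m := \min(1,\inf_{\mathbf x}\kappa^2(\mathbf x))$ and $M := \max(1,\|\kappa^2\|_\infty)$; since $u_{i-1}$ is bounded and $\kappa$ is positive and continuous one has $0 < m \le 1 \le M < \infty$, so all quantities below are finite. Restricting attention to $f \in L^2(\mathbb T^d)$, let $u = L(u_{i-1})^{-1}f$ be the weak solution of $-\Delta u + \kappa^2 u = \kappa^\nu f$; existence and uniqueness follow from Lax--Milgram, the relevant sesquilinear form being bounded on $H^1(\mathbb T^d)$ and coercive with constant $m$.

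The first step is the $H^1$ energy estimate: testing the weak formulation against $\phi = u$ gives $\|\nabla u\|_{L^2}^2 + \int_{\mathbb T^d}\kappa^2|u|^2\,d\mathbf x = \langle \kappa^\nu f, u\rangle$, and bounding the left-hand side below by $m\|u\|_{H^1}^2$ and the right-hand side above by $\|\kappa\|_\infty^\nu\|f\|_{L^2}\|u\|_{H^1}$ yields $\|u\|_{H^1} \le m^{-1}\|\kappa\|_\infty^\nu\|f\|_{L^2}$. The second step promotes this to $H^2$. Rewrite the equation as a Poisson problem $-\Delta u = F$ with $F := \kappa^\nu f - \kappa^2 u \in L^2(\mathbb T^d)$, and observe that testing the weak form against the constant function shows $\langle F, \phi_0\rangle = 0$, so $F$ lies in the range of $-\Delta$. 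Expanding in the Laplacian eigenbasis, $\widehat u(p) = \lambda_p^{-1}\widehat F(p)$ for every mode with $\lambda_p > 0$ while the single zero mode is bounded by $\|u\|_{L^2}$, and since $(1+\lambda_p)/\lambda_p$ is bounded uniformly over the modes with $\lambda_p>0$ one obtains the elementary torus estimate $\|u\|_{H^2} \le C(\|u\|_{L^2} + \|F\|_{L^2})$. Inserting $\|F\|_{L^2} \le \|\kappa\|_\infty^\nu\|f\|_{L^2} + \|\kappa^2\|_\infty\|u\|_{L^2}$ together with the $H^1$ bound gives $\|u\|_{H^2} \le C\|\kappa\|_\infty^\nu\|f\|_{L^2}\bigl(m^{-1} + 1 + \|\kappa^2\|_\infty m^{-1}\bigr)$, and since each of the three summands is at most $Mm^{-2}$ this is exactly the claimed bound.

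For the Galerkin operator the argument is identical with $\kappa := \kappa(u_{i-1}^N)$, plus one observation. The energy estimate goes through verbatim with test function $\phi = u^N \in H_N$. For the regularity step, note that integration by parts is exact on $H_N$ and $-\Delta$ maps $H_N$ into itself, so the Galerkin equations are equivalent to $-\Delta u^N = P_N F^N$, where $F^N := \kappa^\nu f - \kappa^2 u^N$ and $P_N$ is the orthogonal projection onto $H_N$; in particular $\|\Delta u^N\|_{L^2} = \|P_N F^N\|_{L^2} \le \|F^N\|_{L^2}$, and the same eigen-expansion bound (now a finite sum) gives $\|u^N\|_{H^2} \le C(\|u^N\|_{L^2} + \|F^N\|_{L^2})$. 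The remaining bookkeeping is the same as before.

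There is no real difficulty here; the only points needing care are the treatment of the zero Fourier mode on the torus --- one must use the full equation, not merely the Laplacian part, both to see solvability of the Poisson reformulation and to control $\widehat u(0)$ --- the fact that the Galerkin scheme is a lossless projection so that it reduces to a projected Poisson equation to which the same estimate applies, and the final arithmetic confirming $m^{-1} + 1 + \|\kappa^2\|_\infty m^{-1} \le 3Mm^{-2}$ so that the constants collapse into precisely the stated form. Note that this lemma, unlike Lemma \ref{lem:Hoelder}, does not require $d\le 3$: the $L^2$-to-$H^2$ estimate for $-\Delta$ on $\mathbb T^d$ holds in every dimension.
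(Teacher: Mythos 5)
Your proof is correct and follows essentially the same route as the paper's: an $H^1$ coercivity (Lax--Milgram/energy) estimate, followed by an elliptic bootstrap in which the lower-order terms are moved to the right-hand side and a constant-coefficient operator is inverted from $L^2$ to $H^2$. The only cosmetic differences are that the paper inverts the shifted operator $-\Delta + a$ with $a=\inf_{\mathbf x}\kappa^2(u_{i-1}(\mathbf x))/2$, which sidesteps your separate treatment of the zero Fourier mode, and that it handles the Galerkin bound by a ``similar procedure'' where you make the reduction $-\Delta u^N = P_N F^N$ explicit.
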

\begin{proof}
By the Lax-Milgram theorem \cite{Brenner2008}, 
$\Vert (-\Delta + \kappa^2(u_{i-1}) I )^{-1} \Vert _
{H^{-1},H^1} \leq C  /\min(1,$
$\inf \kappa^2 (u_{i-1})) $ and the multiplication operator has  norm  
$\Vert \kappa^\nu (u_{i-1})\Vert _{L^2,L^2}\leq  
\Vert \kappa(u_{i-1}\Vert_\infty^\nu$. Hence, the solution operator has norm
\begin{align*}
\Vert L(u_{i-1})^{-1} \Vert _{L^2,H^1}\leq C
\frac{\Vert \kappa(u_{i-1}(\mathbf{x}))\Vert_\infty^{\nu}}{\min(1,\inf_{\mathbf x}\kappa^2(u_{i-1}(\mathbf x)))}.
\end{align*} 
We rewrite the PDE  in the form
\begin{align*}
(-\Delta+ a ) u^f _{i} =  (a-\kappa^2(u_{i-1} )
u^f_{i} + \kappa^\nu(u_{i-1} ) f,  
\end{align*}
where the right-hand side belongs now to $L^2(\mathbb T^d)$. By inverting the operator $-\Delta+ a I$, we obtain  an equation for the  solution $u^f_{i}$, which leads to the norm estimate 
\begin{align*}\nonumber
\Vert u^f_{i}\Vert_{H^2} \leq & \frac{2}{\min(1,\inf_{\mathbf{x}}  \kappa^2(u_{i-1}(\mathbf{x})))} 
\left(
  \frac{C \Vert \kappa^2(u_{i-1})\Vert_\infty   \Vert \kappa(u_{i-1})\Vert _\infty^\nu   }{\min(1,\inf_{\mathbf{x}}\kappa ^2 (u_{i-1}(\mathbf{x}))) } +
 \Vert \kappa(u_{i-1})\Vert _\infty  ^\nu \right)  \Vert f\Vert_{L^2} \\
\leq & C \Vert \kappa(u_{i-1})\Vert_\infty ^\nu  \frac{\max(1,\Vert \kappa^2(u_{i-1})\Vert_\infty)}{ 
\min(1,\inf_{\mathbf{x}}\kappa^2(u_{i-1}(\mathbf{x})))^2} \Vert f\Vert_{L^2}
\end{align*}
after choosing  $a= \inf_{\mathbf x} \kappa^2 (u_{i-1}(\mathbf {x}) /2$.
Similar procedure leads to the desired estimate for $L_N(u_{i-1})$.
\end{proof}
We will later need the following technical lemma to establish convergence. 

\begin{lem}\label{lem:tech}
Let $u_{i-1}$ and $u_{i-1}^N$ be bounded functions and let  $\kappa$ be a continuously differentiable  positive function.
The mappings $L_N(u_{i-1}^N)^{-1}$ and $L(u_{i-1})^{-1}$ 
satisfy
\begin{align*}
\Vert  L_N  (u_{i-1}^N )^{-1} - L (u_{i-1})^{-1} 
\Vert_{L^2,L^2}  \leq   \frac{1}{N}  G_1(u_{i-1}) +   G_2(u_{i-1},u_{i-1}^N) \Vert u_{i-1}^N- u_{i-1}\Vert _{L^1}^{1/6},
\end{align*}
where 
\begin{align*}
G_1(u_{i-1}) &= C \frac{\max(1,\Vert \kappa^2(u_{i-1})\Vert_{\infty})^2 \max(  \Vert \kappa^{\nu }(u_{i-1} )\Vert_\infty, \Vert \kappa^{-\nu }(u_{i-1} )\Vert_\infty)}{\min(1, \inf \kappa^2(u_{i-1}\mathbf{(x)}))^3} \\    
 G_2(u_{i-1},u^N_{i_1}) &=C \frac{  \max(1,\Vert \kappa^\nu (u_{i-1})\Vert_\infty)
 \max  \left( \Vert \kappa  (u_{i-1}^N)\Vert _\infty,
\Vert\kappa   (u_{i-1})\Vert_\infty\right)^{5/3+5\nu/6}   }{\min(1,\inf_{\mathbf{x}} \kappa^2(u_{i-1}^N (\mathbf{x}))) \min(1,\inf_{\mathbf{x}} \kappa^2(u_{i-1} (\mathbf{x}))) }\\
& \times  \max_{t\in B}  \left(  \kappa'(t) \right)^{1/6} 
\end{align*}    
and the set $B$ is the interval $[ \min ( \inf_{\mathbf{x}} u^N(\mathbf x),\inf_{\mathbf{x}} u(\mathbf x)), \max(\Vert u^N\Vert_\infty, \Vert u \Vert _\infty )]$.
\end{lem}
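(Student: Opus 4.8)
The plan is to insert the intermediate solution operator $L_N(u_{i-1})^{-1}$, i.e.\ the Galerkin operator built with the \emph{true} coefficient field $u_{i-1}$, and to split the error by the triangle inequality,
\begin{align*}
\Vert L_N(u_{i-1}^N)^{-1}-L(u_{i-1})^{-1}\Vert_{L^2,L^2}
&\leq \Vert L_N(u_{i-1}^N)^{-1}-L_N(u_{i-1})^{-1}\Vert_{L^2,L^2}\\
&\quad + \Vert L_N(u_{i-1})^{-1}-L(u_{i-1})^{-1}\Vert_{L^2,L^2}.
\end{align*}
The second term is the pure Galerkin discretisation error for a fixed coefficient and will produce the $\frac{1}{N}G_1(u_{i-1})$ contribution; the first term is a pure coefficient perturbation within the fixed space $H_N$ and will produce the $G_2(u_{i-1},u_{i-1}^N)\Vert u_{i-1}^N-u_{i-1}\Vert_{L^1}^{1/6}$ contribution. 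The bound then follows by adding the two.

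For the Galerkin error term I would run the classical Aubin--Nitsche duality argument. Given $f\in L^2(\mathbb T^d)$, let $z=L(u_{i-1})^{-1}f$ and $z^N=L_N(u_{i-1})^{-1}f$; recalling the variational characterisation of $L_N(\cdot)^{-1}$ behind \eqref{eq:WeakSolution_U0_N}, $z^N$ is the Galerkin approximation of $z$ in the symmetric coercive bilinear form $a(\phi,\psi)=\langle\nabla\phi,\nabla\psi\rangle+\langle\kappa^2(u_{i-1})\phi,\psi\rangle$, so $a(z-z^N,\psi)=0$ for all $\psi\in H_N$. C\'ea's lemma together with the spectral bound $\Vert z-P_Nz\Vert_{H^1}\leq\lambda_{N+1}^{-1/2}\Vert z\Vert_{H^2}$ for the $L^2$-projection $P_N$ gives $\Vert z-z^N\Vert_{H^1}\leq C\lambda_{N+1}^{-1/2}\Vert z\Vert_{H^2}$, and a second duality step against the adjoint problem $-\Delta\chi+\kappa^2(u_{i-1})\chi=z-z^N$ supplies a further $\lambda_{N+1}^{-1/2}$ together with the $L^2\to H^2$ bound of Lemma~\ref{lem:norm}. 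Bounding $\Vert z\Vert_{H^2}$ by $\Vert f\Vert_{L^2}$ through Lemma~\ref{lem:norm} once more, and using $\lambda_{N+1}^{-1}\leq C/N$, collects the remaining constants into $G_1(u_{i-1})/N$: the powers of $\max(1,\Vert\kappa^2(u_{i-1})\Vert_\infty)$ and $\min(1,\inf_{\mathbf x}\kappa^2(u_{i-1}(\mathbf x)))^{-1}$ come from the continuity and coercivity constants of $a$ and from the two elliptic regularity estimates, and $\max(\Vert\kappa^\nu(u_{i-1})\Vert_\infty,\Vert\kappa^{-\nu}(u_{i-1})\Vert_\infty)$ from the $\kappa^\nu$ source multiplier and its role in the duality.

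For the coefficient perturbation term, fix $f\in L^2(\mathbb T^d)$ and set $v=L_N(u_{i-1})^{-1}f$, $v^N=L_N(u_{i-1}^N)^{-1}f$, $w=v^N-v\in H_N$. Subtracting the two corresponding instances of \eqref{eq:WeakSolution_U0_N} and rearranging, $w$ satisfies, for all $\psi\in H_N$,
\begin{align*}
\langle\nabla w,\nabla\psi\rangle+\langle\kappa^2(u_{i-1}^N)w,\psi\rangle
&=\langle(\kappa^\nu(u_{i-1}^N)-\kappa^\nu(u_{i-1}))f,\psi\rangle\\
&\quad -\langle(\kappa^2(u_{i-1}^N)-\kappa^2(u_{i-1}))v,\psi\rangle,
\end{align*}
and testing with $\psi=w$ bounds $\min(1,\inf_{\mathbf x}\kappa^2(u_{i-1}^N(\mathbf x)))\Vert w\Vert_{H^1}^2$ by the right-hand side at $\psi=w$. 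The crucial point is that each multiplier difference $g_\gamma:=\kappa^\gamma(u_{i-1}^N)-\kappa^\gamma(u_{i-1})$, $\gamma\in\{2,\nu\}$, is controlled \emph{both} in $L^\infty$ --- crudely, by $2\max(\Vert\kappa(u_{i-1}^N)\Vert_\infty,\Vert\kappa(u_{i-1})\Vert_\infty)^\gamma$, with no dependence on $u_{i-1}^N-u_{i-1}$ --- and in $L^1$, by the mean value theorem, by a constant multiple of $\max_{t\in B}\kappa'(t)$ (times a power of $\sup_{t\in B}\kappa(t)$, controlled via the exponential bounds on $\kappa$ and the uniform bounds on $u_{i-1}^N,u_{i-1}$) times $\Vert u_{i-1}^N-u_{i-1}\Vert_{L^1}$, the interval $B$ being precisely the range containing the intermediate points. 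Interpolating gives $\Vert g_\gamma\Vert_{L^6}\leq\Vert g_\gamma\Vert_{L^1}^{1/6}\Vert g_\gamma\Vert_{L^\infty}^{5/6}$, which is the origin of the exponent $1/6$ on $\Vert u_{i-1}^N-u_{i-1}\Vert_{L^1}$ and on $\max_{t\in B}\kappa'(t)$, and of the exponents $5/3$ and $5\nu/6$ on $\max(\Vert\kappa(u_{i-1}^N)\Vert_\infty,\Vert\kappa(u_{i-1})\Vert_\infty)$. Since $d\leq 3$, the continuous embedding $H^1(\mathbb T^d)\subset L^6(\mathbb T^d)\subset L^3(\mathbb T^d)$ lets H\"older's inequality with exponents $(6,2,3)$ estimate the two pairings by $C\Vert g_\nu\Vert_{L^6}\Vert f\Vert_{L^2}\Vert w\Vert_{H^1}$ and $C\Vert g_2\Vert_{L^6}\Vert v\Vert_{L^2}\Vert w\Vert_{H^1}$. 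Cancelling one factor $\Vert w\Vert_{H^1}$, dividing by the coercivity constant, estimating $\Vert v\Vert_{L^2}\leq\Vert v\Vert_{H^1}\leq\Vert L_N(u_{i-1})^{-1}\Vert_{L^2,H^1}\Vert f\Vert_{L^2}$ via the $L^2\to H^1$ bound inside the proof of Lemma~\ref{lem:norm} (this contributes the second $\min(1,\inf\kappa^2(u_{i-1}))^{-1}$ and the $\max(1,\Vert\kappa^\nu(u_{i-1})\Vert_\infty)$ factor), and using $\Vert w\Vert_{L^2}\leq\Vert w\Vert_{H^1}$, yields the $G_2$ term.

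I expect the main obstacle to be the coefficient perturbation step, and in particular the requirement that $u_{i-1}^N-u_{i-1}$ appear only through its weak $L^1$-norm --- which is all that the subsequent tightness argument will deliver. One is forced to pair the low-regularity multiplier difference $g_\gamma$, known only in $L^1$ and (crudely) in $L^\infty$, against two $H^1$-factors through an interpolation ($L^6$) that is still compatible with the Sobolev embedding available in $d\leq 3$, while keeping every $\kappa$-, $\kappa'$- and $\inf\kappa^2$-dependent constant explicit so that $G_1$ and $G_2$ can later be controlled on the event that the fields are uniformly bounded. The Galerkin error step, by contrast, is the textbook duality argument once Lemma~\ref{lem:norm} supplies the $L^2\to H^2$ regularity of the solution operators.
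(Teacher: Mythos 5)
Your proposal follows essentially the same route as the paper: the identical splitting through the intermediate operator $L_N(u_{i-1})^{-1}$, with the discretization part controlled via C\'ea's lemma and the $H^2$-regularity of Lemma~\ref{lem:norm}, and the coefficient-perturbation part reduced to the $L^6$-norm of $\kappa^\gamma(u_{i-1}^N)-\kappa^\gamma(u_{i-1})$, $\gamma\in\{2,\nu\}$, bounded by $L^\infty$--$L^1$ interpolation, which is exactly where the paper's exponents $1/6$ and $5/3+5\nu/6$ come from. The only deviations are cosmetic and harmless: you add an Aubin--Nitsche duality step where the paper simply bounds the Fourier tail of the $H^2$ solution to get the $1/N$ factor (your route gives a stronger rate at the cost of slightly larger $\kappa$-dependent constants than the stated $G_1$, which does not affect how the lemma is used), and your subtract-and-test energy identity for the perturbation term is the variational form of the paper's resolvent-identity argument with its $L^{3/2}\to L^2$ operator-norm bookkeeping.
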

\begin{proof}
We partition $T :=L_N (u_{i-1}^N )^{-1} - L (u_{i-1})^{-1}$ into two parts 
\begin{equation}\nonumber
T=
(L_N (u_{i-1}^N ) ^{-1}- L_N (u_{i-1})^{-1})
+(L_N (u_{i-1})^{-1} - L (u_{i-1})^{-1})= 
T_1+ T_2.
\end{equation}
By Cea's lemma \cite{Brenner2008}, the term $T_2$ has an upper bound
\begin{align*}
\Vert T_2 \Vert_{L^2,L^2} \leq C \frac{\max(1,\Vert \kappa^2(u_{i-1})\Vert_{\infty})}{\min(1, \inf_{\mathbf{x}} \kappa^2(u_{i-1}(\mathbf{x})))} \Vert (I- \widetilde P_N) L(u_{i-1})^{-1} \kappa^{-\nu }(u_{i-1}) \Vert_{L
^2,H^1} \Vert \kappa^{\nu }(u_{i-1}) \Vert_\infty,
\end{align*}
where $\widetilde P_N$ is the orthogonal projection onto $H_N$ in 
$ H^1$ and  $I$ is the identity operator. Let $f\in L^2$ and denote  $g:=L(u_{i-1})^{-1} \kappa^{-\nu }(u_{i-1} ) \widetilde  P_Nf$. Then  
\begin{align*}\nonumber
\Vert (I - \widetilde P_N)g \Vert_{H^1}^2
=& \sum_{|k_1|,|k_2|>n} (1+k_1^2+k_2^2)
|\widehat g_{k_1,k_2}|^2 
 =  \sum_{|k_1|,|k_2|>n} (1+k_1^2+k_2^2)^{-1} 
 |(-\Delta +1 )g)^{\wedge}|^2 _{k_1,k_2} \\ %
\leq& \frac{1}{n^2} \Vert (-\Delta + 1)g\Vert_{L^2}^2
\leq \frac{C}{N^2}  \Vert L(u_{i-1})^{-1}\Vert_{L^2,H^2}^2 \Vert \kappa(u_{i-1} (\mathbf{x}))^{-\nu}\Vert _\infty ^2 \Vert  f\Vert_{L^2}^2,
\end{align*}
where $-\Delta+1:H^2\rightarrow L^2$ is continuous, $g^\wedge $ denotes the Fourier transform and  $N=(2n+1)d$. Hence,
\begin{equation}\nonumber
\Vert  T_2 \Vert_{L^2,L^2}\leq \frac{C}{N} 
\frac{\max(1,\Vert \kappa^2(u_{i-1})\Vert_{\infty})^2}{\min(1, \inf \kappa^2(u_{i-1}\mathbf{(x)}))^3} \max(  \Vert \kappa^{\nu }(u_{i-1} )\Vert_\infty, \Vert \kappa^{-\nu }(u_{i-1} )\Vert_\infty).
\end{equation}
In the term $T_1$, we need to tackle the difference of 
$\kappa$-terms. We aim to use $L^p$-estimates in order to later  allow induction with respect to different layers. We partition  $T_1$ into simpler terms 
\begin{align*}
\Vert T_1\Vert_{L^2,L^2}  %
&\leq \Vert L_N(u_{i-1}^N)^{-1} \kappa^{-\nu} (u_{i-1}^N)  \left( \kappa^{\nu} (u_{i-1}^N)-\kappa^\nu (u_{i-1})\right)\Vert_{L^2,L^2}  \\
& + \Vert 
\left(L_N(u_{i-1}^N)^{-1} \kappa^{-\nu}(u_{i-1}^N) - L_N(u_{i-1})^{-1} \kappa^{-\nu}(u_{i-1})\right)  \kappa^{\nu} (u_{i-1})\Vert_{L^2,L^2} \\
&= \Vert T_{11}\Vert
+\Vert T_{12}\Vert.
\end{align*}
In the term  $T_{12}$, we apply the resolvent identity 
\begin{align*}
&L_N(u_{i-1}^N)^{-1} \kappa^{-\nu}(u_{i-1}^N)  = L_N (u_{i-1})^{-1} \kappa^{-\nu}(u_{i-1})\\ & +
L_N(u_{i-1})^{-1} \kappa^{-\nu}(u_{i-1})
(\kappa^{2}(u_{i-1})- 
\kappa^{2}(u_{i-1}^N )) L_N(u_{i-1}^N)^{-1} \kappa^{-\nu}(u_{i-1}^N), 
\end{align*}
which leads to
\begin{align*} %
\Vert T_{12}\Vert_{L^2,L^2} &\leq  \Vert  L_N(u_{i-1})^{-1} \kappa^{-\nu}(u_{i-1}) 
\Vert_{L^{3/2},L^2 }\Vert  \kappa^{2}(u_{i-1}) -  \kappa^{2}(u_{i-1}^N )  \Vert_{L^2,L^{3/2}}   \nonumber\\
& \times \Vert  L_N(u_{i-1}^N)^{-1} \kappa^{-\nu}(u_{i-1}^N) 
\Vert_{H^{-1},H^1} 
 \Vert  \kappa^{\nu} (u_{i-1})\Vert_\infty. 
\end{align*}
The space $L^{3/2}$ embeds continuously into $H^{-1}$, and 
$\Vert A\Vert_{L^{3/2},L^2}\leq  \Vert A\Vert_{H^{-1},H^1}$ for any operator $A$.  Hence by the Lax-Milgram theorem,
\begin{align*}
\Vert L_N(u_{i-1})^{-1} \kappa^{-\nu} (u_{i-1})\Vert_{L^{3/2},L^2}\leq 
\frac{1}{\min(1,\inf_{\mathbf{x}} \kappa^2(u_{i-1} (\mathbf{x}))) }.
\end{align*}
We are now treating $\kappa^{2}(u_{i-1}) -  \kappa^{2}(u_{i-1}^N )$ as a multiplication operator from  $L^2$ to $ L^{3/2} $. That is, a function $g$ defines a multiplication operator, which takes a function  $f$ to  the product of functions $g$ and $f$. 
By  H\"older's  inequality, any multiplication operator $g: L^2\rightarrow 
L^{3/2} $ has norm $\Vert  g\Vert_{L^6}$. Similar treatment for the term 
$T_{11}$ gives
\begin{align*}%
\Vert T_{11}\Vert_{L^2,L^2} %
\leq 
\Vert L_N(u_{i-1}^N)^{-1} \kappa^{-\nu} (u_{i-1}^N)\Vert_{L^{3/2},L^2} 
\Vert \kappa^{\nu} (u_{i-1}^N)-\kappa^\nu (u_{i-1})\Vert _{L^2,L^{3/2}}.
\end{align*}
The difference of  $\kappa$ terms in the estimates for $T_{11}$ and $T_{12}$ reduces to the difference of the functions $u_{i-1}$ through series of elementary estimates 
\begin{align*}
\Vert \kappa^a(u_{i-1}^N)  -\kappa^a (u_{i-1}
)\Vert_{L^6}^6&= 
\int_{\mathbb T^d} |\kappa^a( u_{i-1}^N(\mathbf{x}))- \kappa^a(u_{i-1}(\mathbf{x}))|^6 d\mathbf{x}  \\ &\leq      (2 \max\left( \Vert \kappa^a  (u_{i-1}^N)\Vert_\infty,  
\Vert \kappa^a (u_{i-1})\Vert_\infty\right))^5 \int  \left|\int_{u_{i-1}(\mathbf x) }^{u_{i-1}^N(\mathbf x)} \kappa'(t) dt \right|d\mathbf x \\
&\leq C \max  \left( \Vert \kappa^a  (u_{i-1}^N)\Vert^5 _\infty,
\Vert\kappa^a  (u_{i-1})\Vert_\infty^5\right)  \max_{t\in B}  \left(  \kappa'(t) \right)  \Vert u_{i-1}^N- u_{i-1}\Vert _{L^1},
\end{align*} 
where $a=\nu,2$ and  the set $B$ is the interval $[ \min ( \inf_{\mathbf{x}} u^N(\mathbf x),\inf_{\mathbf{x}} u(\mathbf x)), \max(\Vert u^N\Vert_\infty, \Vert u \Vert _\infty )]$.
\end{proof}
\begin{remark}\label{rem:expbounds}
When $\kappa$ is a continuously differentiable function with bounds $c_1\exp(- a_1 |t|) \leq \kappa(t),|\kappa'(t)| \leq 
c_2 \exp(a_2 |t|)$, where $c_1,c_2, a_1,a_2>0$, the functions $G_1$ and $G_2$ can be taken to be 
\begin{align*}
G_1(u_{i-1})&= C_1\exp(C_2(\Vert u_{i-1}\Vert_\infty))\\
G_2(u_{i-1},u_{i-1}^N)&= C_3 \exp(C_4 (\Vert u_{i-1}\Vert_\infty +  \Vert u_{i-1}^N\Vert_\infty).
\end{align*}
\end{remark}

The next lemma shows  that the SPDE system \eref{eqn:spde1} forces the  layers  to be H\"older-continuous.
\begin{lem}\label{lem:Hoelder}
Let $\kappa$ be a positive continuous function. 
Let $0<\alpha<1$ for $d=1,2$ and  $0<\alpha<1/2$ for $d=3$. The bounded weak solution $u_{i}$, $i=0,\ldots, J$, of the SPDE system \eref{eqn:spde1} is $\alpha$-H\"older continuous  with probability 1, and has the form 
\begin{equation}\label{eqn:Layer_form}
u_{i}(\mathbf{x})= \sum_{p=-\infty} ^\infty \widehat{w}_i(p)  L(u_{i-1})^{-1}\phi_p   (\mathbf {x}) =: 
L(u_{i-1})^{-1} w_{i} (\mathbf {x}), \; i=0,\dots,J.
\end{equation}
\end{lem}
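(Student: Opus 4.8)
The plan is to fix a layer index $i$ and argue conditionally on the field $u_{i-1}$ directly above it, exploiting that, given $u_{i-1}$, the layer $u_i$ is Gaussian. Since $w_i$ is independent of $u_{i-1}$ (and for $i=0$ the field $u_{-1}$ is the deterministic constant $\ln\kappa_0$), conditioning on $u_{i-1}$ leaves $w_i$ a white noise while $\kappa(u_{i-1})$ becomes a fixed function which, because $u_{i-1}$ is bounded by hypothesis and $\kappa$ is positive and continuous, is bounded away from both $0$ and $\infty$. Hence every constant in Lemma~\ref{lem:norm} is a.s.\ finite, so $L(u_{i-1})^{-1}\colon L^2(\mathbb{T}^d)\to H^2(\mathbb{T}^d)$ is, with probability one, a bounded operator with an a.s.\ finite norm; this is the only input I need from the earlier lemmas.

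I would first establish the series representation \eref{eqn:Layer_form}. Since $d\le 3$, the embedding $H^2(\mathbb{T}^d)\hookrightarrow L^2(\mathbb{T}^d)$ is Hilbert--Schmidt (because $\sum_p (1+\lambda_p)^{-2}<\infty$), hence so is $L(u_{i-1})^{-1}$ on $L^2$, and therefore the random series $\sum_p \widehat{w}_i(p)\,L(u_{i-1})^{-1}\phi_p$ converges in $L^2(\mathbb{T}^d)$ almost surely; denote its sum by $\widetilde u_i$. Each summand lies in $H^2$ and solves $-\Delta(\cdot)+\kappa^2(u_{i-1})(\cdot)=\kappa^\nu(u_{i-1})\phi_p$ weakly, so integration by parts turns $\langle\nabla\cdot,\nabla\phi\rangle$ into $-\langle\cdot,\Delta\phi\rangle$; summing against the coefficients and using continuity of $\phi\mapsto\Delta\phi$ on $C^2(\mathbb{T}^d)$ and of multiplication by the bounded function $\kappa^2(u_{i-1})$ shows that $\widetilde u_i$ satisfies the weak formulation of \eref{eqn:spde1}. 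Uniqueness of bounded weak solutions follows since the difference $z$ of two of them solves the homogeneous equation weakly, so that $\Delta z=\kappa^2(u_{i-1})z\in L^2$ promotes $z$ to $H^2$ and the energy identity $\|\nabla z\|^2+\langle\kappa^2(u_{i-1})z,z\rangle=0$ forces $z=0$. Thus $u_i=\widetilde u_i=L(u_{i-1})^{-1}w_i$.

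For the H\"older regularity I would run a conditional Kolmogorov--Chentsov argument. Since $H^2(\mathbb{T}^d)\hookrightarrow C(\mathbb{T}^d)$ for $d\le 3$, point evaluations of each $L(u_{i-1})^{-1}\phi_p$ make sense, and conditional independence of the $\widehat{w}_i(p)$ yields
\[
\mathbb{E}\!\left[\,|u_i(\mathbf{x})-u_i(\mathbf{y})|^2\,\big|\,u_{i-1}\right]
=\sum_p \big|(L(u_{i-1})^{-1}\phi_p)(\mathbf{x})-(L(u_{i-1})^{-1}\phi_p)(\mathbf{y})\big|^2
=\big\|(L(u_{i-1})^{-1})^{\ast}(\delta_{\mathbf{x}}-\delta_{\mathbf{y}})\big\|_{L^2}^2,
\]
where $\{\phi_p\}$ serves as an orthonormal basis and $\delta_{\mathbf{x}}-\delta_{\mathbf{y}}$ is the evaluation-difference functional. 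Now $(L(u_{i-1})^{-1})^{\ast}\colon H^{-s}\to L^2$ is bounded for every $0\le s\le 2$ with norm at most $\|L(u_{i-1})^{-1}\|_{L^2,H^2}$, while a standard Fourier-series computation on the torus gives $\|\delta_{\mathbf{x}}-\delta_{\mathbf{y}}\|_{H^{-s}}^2\le C\,|\mathbf{x}-\mathbf{y}|^{2s-d}$ for $d/2<s\le 2$ (with at worst a logarithmic loss at $s=d/2+1$). Taking $s$ as large as these two constraints allow gives
\[
\mathbb{E}\!\left[\,|u_i(\mathbf{x})-u_i(\mathbf{y})|^2\,\big|\,u_{i-1}\right]\le C(u_{i-1})\,|\mathbf{x}-\mathbf{y}|^{2\gamma}
\]
for any $\gamma<1$ when $d\in\{1,2\}$ and any $\gamma<1/2$ when $d=3$, with $C(u_{i-1})$ a.s.\ finite by Lemma~\ref{lem:norm}. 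Conditional Gaussianity upgrades this to $\mathbb{E}[\,|u_i(\mathbf{x})-u_i(\mathbf{y})|^{2k}\mid u_{i-1}]\le C_k\,C(u_{i-1})^k|\mathbf{x}-\mathbf{y}|^{2k\gamma}$ for every $k\in\mathbb{N}$, and Kolmogorov--Chentsov then produces a modification of $u_i$ that is $\alpha$-H\"older for all $\alpha<\gamma-d/(2k)$; sending $k\to\infty$ and letting $\gamma$ run up to its supremum gives exactly the claimed exponents. Since this modification agrees a.e.\ with the $L^2$-limit above, it is the bounded weak solution, and removing the conditioning gives the statement with probability one.

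The delicate point is the estimate of the third paragraph: one must reconcile $s\le 2$ (all that the $H^2$-range from Lemma~\ref{lem:norm} affords) with $s>d/2$ (needed so that $\delta_{\mathbf{x}}-\delta_{\mathbf{y}}\in H^{-s}$) and verify that the resulting admissible window for $s$ reproduces precisely the stated H\"older exponents --- in particular the borderline choice $s=2$ in dimension $d=3$. A secondary nuisance is that a bounded weak solution is a priori only an $L^\infty$-function and must be carefully identified with the continuous representative appearing in \eref{eqn:Layer_form}.
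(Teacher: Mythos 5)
Your proposal is correct and follows essentially the same route as the paper: condition on $u_{i-1}$ so that $u_i$ is Gaussian, compute the conditional increment variance via the It\=o isometry, bound it using the $L^2\to H^2$ mapping property of $L(u_{i-1})^{-1}$ from Lemma \ref{lem:norm} together with Sobolev regularity for $d\le 3$, and conclude with Kolmogorov--Chentsov; your dual estimate on $\Vert\delta_{\mathbf{x}}-\delta_{\mathbf{y}}\Vert_{H^{-s}}$ is just the dual form of the paper's direct use of the embedding $H^2\hookrightarrow C^{0,\alpha}$ (and the exponent $2s-d$ should be capped at $2$, which does not change the stated range of $\alpha$). Your treatment of the representation \eref{eqn:Layer_form} (Hilbert--Schmidt convergence of the series, verification of the weak formulation, energy-identity uniqueness) spells out details the paper compresses into ``insert the candidate and use Lax--Milgram,'' but it is the same argument in substance.
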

\begin{proof}
We will show that if $u_{i-1}$ is continuous, then 
$u_{i}$ is H\"older-continuous. This will prove continuity inductively,  since the zeroth layer $u_{0}$ has constant $u_{-1}$.  
It is enough to verify continuity after conditioning with $u_{i-1}$,  since $\mathbb{P}(u_{i} \in C^{0,\alpha}(\mathbb T^d))=  \mathbb E [ \mathbb{P}(u_{i}\in C^{0,\alpha}(\mathbb T^d)  \mid u_{i})] =1$ with probability 1 if and only if $\mathbb P(u_{i}\in C^{0,\alpha}(\mathbb T^d) \mid u_{i-1})=1$ . After conditioning, $u_{i}$ will be a Gaussian field for which
we will apply Kolmogorov continuity criterium. To this end, we calculate
\begin{align*}
\mathbb E [|u_{i}(\mathbf{x})-u_{i}(\mathbf{x'})|^{2b} \mid  u_{i-1}] 
=  C_b \sup_{\Vert f \Vert_{L^2} \leq 1} |L(u_{i-1})^{-1} f(\mathbf{x})- L(u_{i-1})^{-1}f(\mathbf{x'})|^{2b},
\end{align*}
which follows from the  It\=o isometry and the equivalent way to calculate   $\Vert g\Vert _{L^2}$ as  $\sup_{\Vert f\Vert_{L^2}\leq 1}
\langle f,g \rangle$. %
When    $u_{i-1}$ is  continuous,  the function  $L(u_{i-1})^{-1} f (\mathbf{x})$ belongs to  $H^2$ by Lemma \ref{lem:norm}.  By Sobolev's  embedding theorem \cite{Evans2014},  $H^2$ embeds into 
$C^{0,\alpha}$ for $d\leq 3$. Hence,
\begin{align*}
  \mathbb E [|u_{i}(\mathbf{x})-u_{i}(\mathbf{x'})|^{2b} \mid  u_{i-1}] 
\leq  C  |\mathbf{x}-\mathbf{x'}|^{2b\alpha},
\end{align*}
which implies H\"older continuity with index smaller than  $(2b\alpha-d)/2b= \alpha-d/2b$, where $b$ can be arbitrarily  large.  

To complete the proof for existence of the solution, we insert  the solution candidate \eref{eqn:Layer_form} into the SPDE system  \eref{eqn:spde1} and do direct calculations. Uniqueness of the solution follows from Lax-Milgram theorem.

\end{proof}

We will show that  $u^N_{i}$ converges in probability to 
$u_{i}$. The proof uses  uniform tightness of the distributions of $u_{i}^N-u_{i}$, which is  a necessary condition for the convergence.   We recall  sufficient conditions for  the uniform tightness (see  p. 61 in \cite{Bogachev_Weak}).
\begin{lem}\label{lem:tight}
The random fields  $U^N$ are uniformly tight
on $C(\mathbb T^d)$ if and only if there exists a function $K:C(\mathbb T^d)\rightarrow [0,\infty)$ with the following 
properties.
\begin{itemize}
\item[(1)] The set $\{ g\in C(\mathbb T^d): K(g)\leq C\}$ is
compact for any $C>0$,
\item [(2)]  $K(U^N) <\infty$ almost surely  for every $N$, and  
\item [(3)]  $\sup_N \mathbb E[ K(U^N)] <\infty$.
\end{itemize}
\end{lem}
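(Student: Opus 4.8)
The plan is to treat this as a repackaging of Prokhorov's theorem together with the Arzel\`a--Ascoli description of the precompact subsets of $C(\mathbb T^d)$ (as in \cite{Bogachev_Weak}), and to establish the two implications separately.

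For the sufficiency direction, which is the one we will actually invoke, I would argue as follows. First observe that property~(1) forces $K$ to be lower semicontinuous, since its sublevel sets $\{K\le C\}$ are compact, hence closed; in particular $K$ is Borel measurable and each $K(U^N)$ is a genuine (extended) random variable, so the expectations in~(3) are well defined. Setting $M:=\sup_N\mathbb E[K(U^N)]<\infty$, Markov's inequality gives $\mathbb P\big(K(U^N)>M/\varepsilon\big)\le\varepsilon$ uniformly in $N$, for every $\varepsilon>0$. Then $\mathcal K_\varepsilon:=\{g\in C(\mathbb T^d):K(g)\le M/\varepsilon\}$ is compact by~(1) and satisfies $\inf_N\mathbb P(U^N\in\mathcal K_\varepsilon)\ge1-\varepsilon$; letting $\varepsilon\downarrow0$ yields uniform tightness.

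For the necessity direction I would run the converse construction. Given uniform tightness, choose for each $k\ge1$ a compact set $\mathcal K_k\subset C(\mathbb T^d)$ with $\sup_N\mathbb P(U^N\notin\mathcal K_k)\le2^{-k}$, and, after replacing $\mathcal K_k$ by $\bigcup_{j\le k}\mathcal K_j$ (a finite union of compacts, still compact), assume the $\mathcal K_k$ are increasing. Define $K(g):=\sum_{k\ge1}\mathbf 1\{g\notin\mathcal K_k\}\in[0,\infty]$. Monotonicity of the $\mathcal K_k$ makes each sublevel set $\{K\le C\}$ equal to one of the $\mathcal K_k$, hence compact, which gives~(1); summing tail probabilities gives $\mathbb E[K(U^N)]=\sum_{k\ge1}\mathbb P(U^N\notin\mathcal K_k)\le1$ for every $N$, which gives~(3); and~(2) follows because a nonnegative random variable with finite mean is finite almost surely.

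The step that carries the content --- and the only place where anything must be checked in applications --- is the compactness in~(1). By Arzel\`a--Ascoli, precompactness in $C(\mathbb T^d)$ amounts to boundedness together with uniform equicontinuity, so the natural choice is a H\"older-type functional, for instance $K(g)=\|g\|_{C^{0,\alpha}(\mathbb T^d)}$, or $\|g\|_\infty$ plus a H\"older seminorm, whose sublevel sets are bounded and equicontinuous by construction; in the light of Lemma~\ref{lem:Hoelder} this is exactly what one expects to be available for the Galerkin fields. With such a choice the lemma reduces the verification of uniform tightness of $U^N$ to a single moment estimate of the form $\sup_N\mathbb E\big[\|U^N\|_{C^{0,\alpha}}\big]<\infty$. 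I do not anticipate a real obstacle here: the only delicate points are the measurability of $K$ (handed to us for free by lower semicontinuity) and the possibility that $K$ attains the value $+\infty$ (absorbed by property~(2)), together with the cost-free step of making the extracted sequence of compacts increasing.
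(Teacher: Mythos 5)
Your proof is correct, but it is worth pointing out that the paper does not prove this lemma at all: it is recalled verbatim from the literature (p.~61 of \cite{Bogachev_Weak}) and used only in the ``if'' direction, inside Lemma \ref{lem:ut}. What you supply is the standard proof of that cited criterion, and both halves are sound: sufficiency is exactly Markov's inequality applied to $K(U^N)$ combined with compactness of the sublevel sets (your observation that (1) forces lower semicontinuity, hence Borel measurability of $K$, correctly disposes of the only measurability issue), and necessity is the usual construction $K=\sum_k \mathbf 1\{\cdot\notin\mathcal K_k\}$ with an increasing exhaustion of compacts. One small caveat on the converse: since $C(\mathbb T^d)$ is not $\sigma$-compact, your $K$ genuinely takes the value $+\infty$ off $\bigcup_k\mathcal K_k$ and cannot be modified to be finite everywhere without destroying (1); this is a quirk of the statement as formulated (which asks for $K:C(\mathbb T^d)\to[0,\infty)$) rather than of your argument, and it is immaterial for the paper since only sufficiency is invoked. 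Finally, your closing suggestion to take $K(g)=\Vert g\Vert_{C^{0,\alpha}}$ is what the paper does only for the Gaussian zeroth layer; for the deeper layers it instead takes $K=F_i(\Vert\cdot\Vert_\alpha)$, an iterated logarithm of the H\"older norm, precisely because a uniform moment bound on the H\"older norm itself is not available there --- but that concerns the application in Lemma \ref{lem:ut}, not the present statement.
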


We will  need several iterations of the logarithm so we define the iterated 
composition by setting $F(x)=\ln(1+x)$, 
$F_0(x)=x$ and $F_{n+1}(x)=F\circ F_n(x)$. 
\begin{remark}\label{rem:subadd}
The function $F_i$ is increasing and, moreover,  subadditive on non-negative numbers. That is, $F_n(x+y)\leq F_n(x)+F_n(y)$ for all $x,y\geq$, which follows by induction from subadditivity  
$\ln(1+x+y)\leq \ln((1+x)(1+y))= \ln(1+x)+ \ln(1+y)$ of each $F$. Similar procedure shows that $F_n(xy)\leq F_n(x)+ F_n(y)$.
\end{remark}

\begin{lem}\label{lem:ut}
Let $d=1,2$ or $3$. Let $\kappa$ be a continuous function with bounds $c_1 \exp(- a_1 |t|) \leq \kappa (t) \leq 
c_2\exp(a_2 |t|)$, where $c_1,c_2,a_1,a_2>0$. Let $u_{i}^N$, $i=0,\ldots,J$, solve 
the system \eref{eq:WeakSolution_U0_N} and let $u_i$, $i=0,\ldots,J$ solve the system \eqref{eqn:spde1}.  Then  the random fields  $u_{i}^N$  are uniformly tight on $C(\mathbb T^d)$, the random fields  $u_{i}^N-u_{i}$  are uniformly tight on $C(\mathbb T^d)$, 
and the vector-valued random fields $(u_{i}^N,u_{i})$ are uniformly tight on $C(\mathbb T^d;\mathbb R^2)$.
\end{lem}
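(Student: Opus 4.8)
The plan is to verify, for each layer $i=0,\dots,J$, the sufficient conditions of Lemma~\ref{lem:tight} with the functional $K_i(g):=F_{i-1}\big(\Vert g\Vert_{C^{0,\alpha}(\mathbb T^d)}\big)$, where $\alpha$ is as in Lemma~\ref{lem:Hoelder} and we set $F_{-1}(x):=e^x-1$ (so $K_0(g)=\exp(\Vert g\Vert_{C^{0,\alpha}})-1$). Since each $F_n$ is continuous and strictly increasing, the sublevel set $\{K_i\le C\}$ is a closed ball of $C^{0,\alpha}(\mathbb T^d)$, which is compact in $C(\mathbb T^d)$ by Arzel\`a--Ascoli, so property~(1) holds; property~(2) is immediate because $u_i^N\in H_N$ is smooth. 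The substance of the proof is property~(3), i.e.
\begin{equation}\nonumber
\sup_N \mathbb E\big[ F_{i-1}\big(\Vert u_i^N\Vert_{C^{0,\alpha}}\big)\big] <\infty, \qquad i=0,\dots,J ,
\end{equation}
and I would establish this by induction on $i$.

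The engine of the induction is a conditional moment bound. Conditionally on $u_{i-1}^N$, the field $u_i^N=L_N(u_{i-1}^N)^{-1}w_i^N$ is a centred Gaussian field, because $w_i$ is independent of $u_{i-1}^N$; arguing as in the proof of Lemma~\ref{lem:Hoelder} (It\=o isometry and $\Vert P_N h\Vert_{L^2}\le\Vert h\Vert_{L^2}$, so that the divergent factor $\Vert w_i^N\Vert_{L^2}=\sqrt{\dim H_N}$ never appears), together with the Kolmogorov--Chentsov moment estimate, the Sobolev embedding $H^2(\mathbb T^d)\hookrightarrow C^{0,\alpha}(\mathbb T^d)$ for $d\le 3$, Lemma~\ref{lem:norm}, and the exponential bounds on $\kappa$ of Remark~\ref{rem:expbounds}, one gets constants $C_b$ and $C'$ independent of $N$ with
\begin{equation}\nonumber
\mathbb E\big[\Vert u_i^N\Vert_{C^{0,\alpha}}^{2b}\,\big|\,u_{i-1}^N\big]\le C_b\,\exp\!\big(b\,C'\,\Vert u_{i-1}^N\Vert_\infty\big), \qquad b>0 .
\end{equation}
For $i=0$ the function $u_{-1}^N=\ln\kappa_0$ is constant, so this gives polynomial moments of $\Vert u_0^N\Vert_{C^{0,\alpha}}$ uniform in $N$; since $u_0^N$ is Gaussian, Fernique's theorem (with a constant uniform in $N$ by that polynomial bound) upgrades them to $\sup_N\mathbb E[\exp(\lambda\Vert u_0^N\Vert_{C^{0,\alpha}})]<\infty$ for every $\lambda>0$, which is the claim for $i=0$ and, fed into the conditional bound with $2b=1$, also the claim for $i=1$.

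For the inductive step with $i\ge2$, I would push $F_{i-1}$ inside the conditional expectation. Each $F_n$ is concave and increasing on $[0,\infty)$, so conditional Jensen and the conditional bound (which gives $\mathbb E[\Vert u_i^N\Vert_{C^{0,\alpha}}\mid u_{i-1}^N]\le C_*\exp(C'\Vert u_{i-1}^N\Vert_\infty)$) yield
\begin{equation}\nonumber
\mathbb E\big[F_{i-1}\big(\Vert u_i^N\Vert_{C^{0,\alpha}}\big)\,\big|\,u_{i-1}^N\big]\le F_{i-1}\!\big(C_*\exp(C'\Vert u_{i-1}^N\Vert_\infty)\big) .
\end{equation}
Three elementary facts then reduce the right-hand side to $c_i+F_{i-2}\big(\Vert u_{i-1}^N\Vert_{C^{0,\alpha}}\big)$ for some $c_i$: the subadditivity $F_n(xy)\le F_n(x)+F_n(y)$ of Remark~\ref{rem:subadd}, which peels off the constants; the bound $F_n(e^y)\le C_n+F_{n-1}(y)$ for $n\ge1$, proved by induction from $\ln(1+e^y)\le\ln2+y$, which cancels exactly one exponential against one logarithm; and $\Vert u_{i-1}^N\Vert_\infty\le\Vert u_{i-1}^N\Vert_{C^{0,\alpha}}$. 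Taking expectations,
\begin{equation}\nonumber
\mathbb E\big[F_{i-1}\big(\Vert u_i^N\Vert_{C^{0,\alpha}}\big)\big]\le c_i+\mathbb E\big[F_{i-2}\big(\Vert u_{i-1}^N\Vert_{C^{0,\alpha}}\big)\big] ,
\end{equation}
and the induction hypothesis bounds the last term uniformly in $N$; this closes the induction and gives the uniform tightness of $\{u_i^N\}_N$ on $C(\mathbb T^d)$.

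For the other two assertions, the identical computation with $L(u_{i-1})^{-1}$ in place of $L_N(u_{i-1}^N)^{-1}$ and the representation $u_i=L(u_{i-1})^{-1}w_i$ of \eref{eqn:Layer_form} (conditioning on $u_{i-1}$, using Lemma~\ref{lem:norm} for $L(u_{i-1})^{-1}$) shows $\mathbb E[F_{i-1}(\Vert u_i\Vert_{C^{0,\alpha}})]<\infty$. Then $\Vert u_i^N-u_i\Vert_{C^{0,\alpha}}\le\Vert u_i^N\Vert_{C^{0,\alpha}}+\Vert u_i\Vert_{C^{0,\alpha}}$ together with one more use of $F_{i-1}$ and its subadditivity gives $\sup_N\mathbb E[F_{i-1}(\Vert u_i^N-u_i\Vert_{C^{0,\alpha}})]<\infty$, so Lemma~\ref{lem:tight} also yields uniform tightness of $\{u_i^N-u_i\}_N$; and uniform tightness of $(u_i^N,u_i)$ on $C(\mathbb T^d;\mathbb R^2)$ follows because a sequence of laws on a product of Polish spaces is tight once both marginal sequences are. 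I expect the crux to be the uniformity in $N$ against a tower of exponentials: the conditional bound makes the $i$-th layer exponentially heavier-tailed than the $(i-1)$-st, whose norm (already a couple of layers down) has no finite polynomial moment, and it is precisely the iterated logarithms $F_n$ --- one $\log$ absorbing one layer's exponential via concavity and $F_n(e^y)\le C_n+F_{n-1}(y)$, plus subadditivity --- that let the induction go through.
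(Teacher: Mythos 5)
Your proposal is correct and follows essentially the same route as the paper: tightness via the criterion of Lemma \ref{lem:tight} with iterated-logarithm functionals of the H\"older norm, compact sublevel sets by Arzel\`a--Ascoli, a conditional Gaussian estimate from Lemma \ref{lem:norm} giving a bound exponential in $\Vert u_{i-1}^N\Vert_\infty$, and an induction over layers in which conditional Jensen plus the subadditivity of $F_n$ (one logarithm absorbing one layer's exponential) closes the argument, with differences and the joint tightness handled by the triangle inequality. The only cosmetic differences are that you bound the conditional H\"older-norm moments by a Kolmogorov--Chentsov moment estimate (and treat layer $0$ via Fernique-type exponential moments), whereas the paper dominates the conditioned field by an auxiliary Gaussian field with $H^{-2}$ covariance and invokes Fernique together with a Gaussian comparison result, and your $F$-tower is index-shifted by one.
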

\begin{proof}
We equip the H\"older space $C^{0,\alpha}(\mathbb T^d;\mathbb R^p)$ with its usual norm \begin{equation}\nonumber
\Vert g\Vert _\alpha = \sup_{x\not= y} 
\frac{|g(x)-g(y)|}{|x-y|^\alpha} +\sup_{x} |g(x)|.
\end{equation}
Here $\alpha$ is chosen as in Lemma \ref{lem:Hoelder}. We will use Kolmogorov-Chentsov tightness criterium (see \cite{Kallenberg})
to show the  uniform tightness
of the  zeroth order  layers, which are Gaussian. The desired estimate 
\begin{align*}
\mathbb E[ |u_{0}^N(\mathbf{x})-u_{0}^N(\mathbf{x'})|^a] \leq C |\mathbf{x}-\mathbf{x'} |^{d+b}
\end{align*} 
follows from choosing large enough $a$ in 
\begin{align*}
\mathbb E[ |u_{0}^N(\mathbf{x})-u_{0}^N(\mathbf{x'})|^{a}] &\leq C  \sup_{\Vert f\Vert_{L^2}\leq 1}  
\frac{|L_N(\ln(\kappa_0))^{-1}
f (\mathbf{x}) - L_N(\ln(\kappa_0))^{-1}f (\mathbf{x'}) |^{a}}{|\mathbf{x}-\mathbf{x'}|^{a\alpha}}  |\mathbf{x}-\mathbf{x'}|^{a\alpha} \\
& \leq  C\Vert L_N(\ln(\kappa_0))^{-1}\Vert_{L^2,C^{0,\alpha}}^a |\mathbf{x}-\mathbf{x'}|^{a\alpha} ,
\end{align*} 
where we applied It\=o isometry and the definition of $L^2$-norm as a supremum.  By Lemma \ref{lem:norm} and  Sobolev's embedding theorem, the operator norm of 
$L_N(\ln(\kappa_0))^{-1}$  is bounded for any $0<\alpha<1/2$.  Since     $\kappa_0$ is a constant, the bound is uniform. The case for 
$u_{0}^N -u_{0}$ and $(u_{0}^N,u_{0})$  follow similarly with the help of the triangle inequality. 

For other layers, we use Lemma \ref{lem:tight}, where we 
choose  $K(g)= F_i(g) $. For simplicity, we demonstrate Condition 1 only for  $i=3$, since the generalization is clear. The set
\begin{equation}\nonumber
\{g: K(g)\leq C\} =\{ g: \Vert g\Vert _{\alpha}  \leq \exp(\exp(\exp(C) -1)-1)-1 \}
\end{equation}
is clearly a closed set, which  contains bounded equicontinuous  functions.   The set $A\subset C(\mathbb T^d)$ is then compact by  the Arzel\'a-Ascoli theorem (see \cite{Rudin1976}).  Moreover, the fields   $u_{i}^N-u_{i}$ are almost surely $\alpha$-H\"older continuous, by Lemma \ref{lem:Hoelder} and  since the approximations belong to 
$H_N$. Hence,  Condition 2  holds. 
To show Condition 3, we write 
$u_{i}^N$  as $L_N (u_{i-1}^N)^{-1}  w_{i}^N$ and 
$u_{i}$ as  $L(u_{i-1})^{-1} w_{i}$. By the triangle inequality and the subadditivity of $F_i$ (see Remark \ref{rem:subadd}), we can 
check the boundedness for  $u_{i}^N$ and $u_{i}$ separately. Since the  procedure is the same for both of the terms, we only show here the case for $u_{i}^N$. 
By Jensen's inequality
\begin{equation}\label{eqn:jensen}
\mathbb E[\ln(1+ \Vert u_{i}^N\Vert_{\alpha}) \mid 
 u_{i-1}^N] \leq  
  \ln \left(
 \mathbb E[ 1+ \Vert L_N(u_{i-1}^N)^{-1} w_{i}^N \Vert_{\alpha}  
\mid  u_{i-1}^N]  \right).
\end{equation}
The conditioning with $u_{i-1}^N$ lets us compute expectations of Gaussian variables. Instead of attacking directly the expectation in Equation \eref{eqn:jensen}, we will seek a Gaussian zero mean random field $U$ with larger variance than the conditioned $u_{i}^N$.  Then also certain other expectations of $u_i^N$ will be bounded by expectations of $U$. Under conditioning, the variances of the  random variables
 $\int\phi(\mathbf{x})  L_N(u_{i-1}^N)^{-1}w^N_{i}  (\mathbf {x}) d\mathbf{x}$ are 
 \begin{equation}\nonumber %
\mathbb E[ \langle w_{i}^N , (L_N (u_{i-1}^N)^{-1})^* \phi\rangle ^2 \mid  u_{i-1}^N] = 
\Vert P_N (L_N (u_{i-1}^N)^{-1})^* \phi \Vert _{L^2}^2  \leq 
\Vert L_N (u_{i-1}^N)^{-1}\Vert_{L^2,H^2} ^2 \Vert \phi\Vert_{H^{-2}}^2,  
\end{equation}
where the last inequality follows from properties of the adjoint operator
and  Lemma \ref{lem:norm}.  Set $U$ to be a zero mean Gaussian random field $U$ on $\mathbb T^d$ whose
covariance is defined by equations  
$\mathbb E[\langle U,\phi\rangle ^2]= \Vert \phi\Vert_{H^{-2}}^2$ for all smooth $\phi$. 
Then $U$ has  sample paths in 
H\"older space  $C^{0,\alpha}(\mathbb T^d)$ by Sobolev's embedding theorem and   Kolmogorov's continuity theorem (see \cite{Marcus2006}). By  Fernique's theorem (e.g. \cite{Bogachev_Gaussian}), the expectation $\mathbb E[ \Vert U\Vert_{\alpha}]$  is finite.  
Since  norms  are  absolutely continuous functions,   also  the conditional  expectation of $\Vert u_{i}^N\Vert_{\alpha}$ is bounded by  $\Vert L(u_{i-1}^N) ^{-1}\Vert_{L^2,H^2}\mathbb E[\Vert U\Vert _{\alpha}]$  (see   Corollary 3.3.7  in \cite{Bogachev_Gaussian}).
 
Further application of  Lemma \ref{lem:norm}  in Equation \eref{eqn:jensen} gives our main estimate
\begin{align}
\mathbb E[ \ln ( 1+ \Vert  u_{i}^N\Vert _\alpha)  \mid
u_{i-1}^N] 
&\leq   \ln \left [1+ 
 c \Vert \kappa(u_{i-1}^N (\mathbf{x}))\Vert_\infty ^\nu  \frac{\max(1,\Vert \kappa^2(u_{i-1}^N (\mathbf{x}))\Vert_\infty)}{ 
\min(1,\inf\kappa^2(u_{i-1}^N (\mathbf{x})))^2}\mathbb E[\Vert U \Vert _{\alpha}] \right] \nonumber \\
& \leq  c_{\alpha,\nu} (1+ \Vert u_{i-1}^N\Vert_\infty ), \label{eqn:main_est}
\end{align} 
where  we applied the bounds of $\kappa$ and the elementary inequalities  
$\max(a,b\exp(c))
\leq \max(a,b)\exp(|c|)$,  $\min(1,\inf \exp(g(x)))\geq 
\exp(-\Vert g\Vert_\infty)$, $ (1+ab)\leq (1+a)(1+b)$ and  $1+a\leq 2\max(1,a)$. Here we can choose  constants larger than 1. 

When $i=1$,  the expectations  \eref{eqn:main_est}  are uniformly bounded,  since   expectations of $\Vert u_{0}^N\Vert_\infty$  are bounded.  Then 
Lemma \ref{lem:tight} shows that $u_{1}^N$
are uniformly tight. For the subsequent layers  we need to operate multiple times with the logarithm and  Jensen's inequality through inductive steps
\begin{align*}
\mathbb E[ F_i( \Vert  u_{i}^N\Vert_\alpha)) ]
&\leq \mathbb E[F_{i}(\mathbb E[\Vert  u_{i}^N\Vert_\alpha\mid  
u_{i-1}^N])]
\leq \mathbb E [ F_{i}(c(1+\Vert u_{i-1}^N\Vert_\alpha))]\\
& \leq C+ \mathbb E[F_{i-1}(\Vert u_{i-1}^N\Vert_\alpha))]
\end{align*}
with the help of the additivity properties of $F_i$ from  Remark \ref{rem:subadd}.

 \end{proof}

\begin{thm}\label{thm:main}
Let $d=1,2,$ or $3$. Let $\kappa$ be a continuously differentiable  function with bounds $c_1 \exp(-a_1| t|) \leq \kappa (t), |\kappa'(t)| \leq 
c_2\exp(a_2| t|)$, where $c_1,c_2,a_1,a_2>0$. Let $u_{i}^N$, $i=0,\ldots,J$.
The solution  $(u_{0}^N, \ldots, u_{J}^N)$  of the Galerkin system \eref{eq:WeakSolution_U0_N} converge in probability 
to the weak solution  $(u_{0},\ldots, u_{J})$ of the SPDE system \eref{eqn:spde1} on $L^2(\mathbb T^d;\mathbb R^{J+1})$  as $N\rightarrow \infty$. 
\end{thm}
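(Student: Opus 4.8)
The plan is to induct on the layer index $i=0,\dots,J$, showing at each stage that $u_i^N\to u_i$ in probability on $L^2(\mathbb T^d)$; convergence in probability of the full vector on $L^2(\mathbb T^d;\mathbb R^{J+1})$ then follows because the target space is a finite product. Two facts proved earlier feed into every step: $\Vert u_i^N\Vert_\infty$ is bounded in probability uniformly in $N$ (Lemma \ref{lem:ut}), and $\Vert u_i\Vert_\infty<\infty$ almost surely (Lemma \ref{lem:Hoelder}); consequently every $\kappa$-dependent prefactor appearing in Lemmas \ref{lem:norm} and \ref{lem:tech} and in Remark \ref{rem:expbounds} is tight in $N$. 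The base case $i=0$ is essentially free: here $u_{-1}^N=u_{-1}=\ln\kappa_0$ is deterministic, so Lemma \ref{lem:tech} gives $\Vert L_N(\ln\kappa_0)^{-1}-L(\ln\kappa_0)^{-1}\Vert_{L^2,L^2}\le C/N$, and the two estimates below (with the $L^1$-term absent) yield $\mathbb E\Vert u_0^N-u_0\Vert_{L^2}^2\to 0$.

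For the inductive step assume $u_{i-1}^N\to u_{i-1}$ in probability on $L^2(\mathbb T^d)$, hence also on $L^1(\mathbb T^d)$. Using $u_i=L(u_{i-1})^{-1}w_i$ (Lemma \ref{lem:Hoelder}), $u_i^N=L_N(u_{i-1}^N)^{-1}w_i^N$ and $w_i^N=P_Nw_i$, write
\[
u_i^N-u_i=D_N\,w_i^N-L(u_{i-1})^{-1}(w_i-w_i^N),\qquad D_N:=L_N(u_{i-1}^N)^{-1}-L(u_{i-1})^{-1}.
\]
Since the white noise $w_i$ is independent of $(u_{i-1},u_{i-1}^N)$, I condition on that pair and apply the It\=o isometry. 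The second term contributes $\sum_{|p|>N}\Vert L(u_{i-1})^{-1}\phi_p\Vert_{L^2}^2$, the tail of $\Vert L(u_{i-1})^{-1}\Vert_{\mathrm{HS}(L^2,L^2)}^2$; this norm is finite almost surely because $L(u_{i-1})^{-1}$ factors through $H^2$ (Lemma \ref{lem:norm}) and $H^2\hookrightarrow L^2$ is Hilbert--Schmidt when $d\le 3$, so the tail vanishes almost surely. The first term contributes $\Vert D_NP_N\Vert_{\mathrm{HS}(L^2,L^2)}^2$, and controlling this is the heart of the argument.

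The crude bound $\Vert D_NP_N\Vert_{\mathrm{HS}}\le\Vert D_N\Vert_{L^2,L^2}\Vert P_N\Vert_{\mathrm{HS}}$ costs a factor $\sqrt N$, which is fatal since Lemma \ref{lem:tech} only yields $\Vert D_N\Vert_{L^2,L^2}\le\tfrac1N G_1(u_{i-1})+G_2(u_{i-1},u_{i-1}^N)\Vert u_{i-1}^N-u_{i-1}\Vert_{L^1}^{1/6}$, whose second summand carries no rate in $N$. The fix is to exploit the elliptic smoothing: $D_N$ is bounded $L^2\to H^2$ with norm $B_N\le\Vert L_N(u_{i-1}^N)^{-1}\Vert_{L^2,H^2}+\Vert L(u_{i-1})^{-1}\Vert_{L^2,H^2}$, which is tight in $N$ by Lemma \ref{lem:norm}. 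Interpolating $\Vert g\Vert_{H^s}\le\Vert g\Vert_{L^2}^{1-s/2}\Vert g\Vert_{H^2}^{s/2}$ for a fixed $s\in(d/2,2)$ (a nonempty window because $d\le 3$) gives $\Vert D_NP_N\Vert_{L^2,H^s}\le\varepsilon_N^{1-s/2}B_N^{s/2}$ with $\varepsilon_N:=\Vert D_N\Vert_{L^2,L^2}$; since $H^s\hookrightarrow L^2$ is Hilbert--Schmidt for $s>d/2$, it follows that $\Vert D_NP_N\Vert_{\mathrm{HS}(L^2,L^2)}\le C\,\varepsilon_N^{1-s/2}B_N^{s/2}$. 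Now $\varepsilon_N\to 0$ in probability: the $\tfrac1N G_1$ part because $G_1(u_{i-1})<\infty$ almost surely, the other part because $G_2$ is tight (Remark \ref{rem:expbounds} together with tightness of $\Vert u_{i-1}^N\Vert_\infty$) while $\Vert u_{i-1}^N-u_{i-1}\Vert_{L^1}\to 0$ in probability by the induction hypothesis. As $B_N$ is tight and $1-s/2>0$, the bound tends to $0$ in probability. Hence $\mathbb E[\Vert u_i^N-u_i\Vert_{L^2}^2\mid u_{i-1},u_{i-1}^N]\to 0$ in probability, and a Markov-inequality plus dominated-convergence argument upgrades this to $u_i^N\to u_i$ in probability on $L^2(\mathbb T^d)$, completing the induction.

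The delicate point is exactly the treatment of $\Vert D_NP_N\Vert_{\mathrm{HS}}$: the white-noise input forces a Hilbert--Schmidt rather than operator-norm control of $D_N$, and the naive mode count introduces a $\sqrt N$ that the rate-free bound of Lemma \ref{lem:tech} cannot absorb; it is the interpolation between the (small but rate-free) $L^2\to L^2$ estimate and the (tight) $L^2\to H^2$ estimate --- available only for $d\le 3$ --- that makes the proof close. Secondary care is needed in the independence bookkeeping for the It\=o isometry and in checking that the tightness of Lemma \ref{lem:ut} is genuinely uniform in $N$ at every layer.
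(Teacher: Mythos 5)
Your argument is correct, and it reaches the conclusion by a genuinely different mechanism than the paper at the decisive step. Both proofs induct over the layers, condition on the earlier noises so that $u_i^N$ and $u_i$ become conditionally Gaussian, apply the It\=o isometry, and feed in Lemmas \ref{lem:norm}, \ref{lem:tech} and \ref{lem:ut}; but where you bound the conditional second moment $\mathbb E[\Vert u_i^N-u_i\Vert_{L^2}^2\mid u_{i-1},u_{i-1}^N]$ directly, the paper never needs Hilbert--Schmidt control of $D_N=L_N(u_{i-1}^N)^{-1}-L(u_{i-1})^{-1}$ at all: it tests against a fixed $\phi\in L^2$, so the It\=o isometry only produces $\Vert (P_N(L_N(u_{i-1}^N)^{-1})^*-(L(u_{i-1})^{-1})^*)\phi\Vert_{L^2}$, which is controlled by the operator-norm estimate of Lemma \ref{lem:tech} alone; convergence in probability on $L^2$ is then recovered from the uniform tightness of $u_i^N-u_i$ (Lemma \ref{lem:ut}) by identifying all weak subsequential limits as $\delta_0$ through these one-dimensional projections, with the $\min(1,\cdot)$ truncation and a compact-set splitting absorbing the unbounded exponential prefactors. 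Your route replaces that tightness-plus-weak-limit-identification machinery by the interpolation $\Vert D_N\Vert_{L^2,H^s}\leq \Vert D_N\Vert_{L^2,L^2}^{1-s/2}\Vert D_N\Vert_{L^2,H^2}^{s/2}$ for $s\in(d/2,2)$ together with the Hilbert--Schmidt embedding $H^s\hookrightarrow L^2$ --- a clean observation that correctly locates where $d\leq 3$ enters and that the crude $\sqrt{\dim H_N}$ bound would indeed destroy the rate-free part of Lemma \ref{lem:tech}. What your approach buys is a more direct, slightly more quantitative conditional $L^2$ estimate and a lighter use of Lemma \ref{lem:ut} (only tightness of $\Vert u_{i-1}^N\Vert_\infty$, not of the differences $u_i^N-u_i$ or of the pairs $(u_i^N,u_i)$); what the paper's approach buys is that it gets by with the weaker operator-norm information, at the cost of invoking Prohorov-type compactness and characteristic-functional arguments. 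Your independence bookkeeping (conditioning on $\sigma(u_{i-1},u_{i-1}^N)\subset\sigma(w_0,\ldots,w_{i-1})$, with $w_i$ independent of that $\sigma$-algebra), the treatment of the tail term via the a.s.\ finite Hilbert--Schmidt norm of $L(u_{i-1})^{-1}$, and the conditional-Chebyshev plus bounded-convergence upgrade are all sound, so I see no gap.
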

\begin{proof}
It is enough to show componentwise convergence. 
Uniform tightness on $C(\mathbb T^d)$ implies uniform tightness on $L^2(\mathbb T^d)$, which in turn implies  relative compactness in weak topology of distributions. Hence, by Lemma \ref{lem:ut} each subsequence of the distributions of   $u_{i}^N - u_{i}$ has a weakly convergent subsequence, say $u_{i}^{N_{k}} - u_{i}$. Recall, that the convergence in probability is equivalent to the convergence of 
$
\tau_{N}:=\mathbb E[\min(1,\Vert u_{i}^N -u_{i}\Vert_{L^2})],
$
where $\min(1,\Vert\cdot\Vert_{L^2})$ is now  a bounded continuous function. By weak convergence of distributions, the subsequence  $\tau_{N_k}$  has  some limit. It remains to verify that limits of  $u_{i}^{N_k}-u_{i}$  in distribution are  zero.  The characteristic functions of 
$u_{i}^N-u_{i}$ converge to 1, if 
$
\tau_N(\phi) := \mathbb E [ \min(1, |\langle  u_{i}^N -u_{i},\phi\rangle|),
$
converge to zero  for every $\phi\in L^2$.   This formulation   makes calculation of  expectations manageable.   The essential  difference to other approaches  arises from the monotonicity of conditional expectations. Namely,  the  property  $\mathbb E[ 1- \min(1, G) |\Sigma_0]\geq 0 $ for $G\geq 0$ implies that 
\begin{equation}\label{eqn:mono}
\mathbb E[ \min(1, G) \mid \Sigma_0]
= \min(1, \mathbb E[ \min(1, G) \mid \Sigma_0])
\leq \min(1, \mathbb E[G \mid \Sigma_0]).
\end{equation}
Especially,
\begin{equation}\label{eqn:F1}
\tau_N(\phi) \leq  \mathbb  E [ \min(1, \mathbb E [ |\langle  u_{i}^N -u_{i},\phi\rangle | \mid  w_{0}, \ldots,w_{i-1}  ])],
\end{equation}
where we applied \eref{eqn:mono} after taking a conditional expectation inside the expectation. We 
condition with white noises in order to handle both 
$u_{i}$ and its approximation $u_{i}^N$ easily.
Moreover, the cutoff function $\min(1,\cdot)$ is nondecreasing and subadditive.  
 
Under conditioning with $w_{0}, \ldots,w_{i-1}$, the 
random fields  $u_{i}^N$ and $u_{i}$ in Equation  \eref{eqn:F1}  become 
Gaussian, which enables us  to   compute
\begin{align*}
\tau_N(\phi)
=  & \mathbb E [ \min (1, c\Vert ( P_N (L_N(u_{i-1}^N)^{-1})^*  -(L(u_{i-1})^{-1})^* ) \phi\Vert_{L^2} ) ]  \\
\leq&   \mathbb E\left[ \min\left(1,   c_\phi  \Vert  L_N(u_{i-1}^N)^{-1} -L(u_{ i-1})^{-1}\Vert_{L^2,L^2} \right)+ \min\left(1, 
c \Vert  (P_N-I) (L (u_{i-1})^{-1})^*\phi \Vert_{L^2} \right)\right]\\ =&:\tau_N^1(\phi) +
\tau_N^2(\phi)
\end{align*}
via  the It\=o isometry and  the properties of adjoints. Since $L (u_{i-1})^*\phi\in L^2 $, the term $\tau_N^2(\phi)$ converges to zero. %
We will apply  Lemma \ref{lem:tech} for the difference of operators $L_N(u_{i-1}^N)^{-1}$ and $L^{-1}(u_{i-1})$ in  $\tau_N^1(\phi)$, which leads to  a well-behaving estimate 
\begin{align*}
\tau_N^1(\phi)\leq &\mathbb E \bigg[ \min\left( 1,   
 \frac{C_\phi }{N}  \exp(C_1\Vert u_{i-1}\Vert_\infty )\right) \bigg ]
+ \mathbb E[ \min(1,C_\phi \exp( C_2 (\Vert u_{i-1}\Vert_\infty  +  \Vert u_{i-1}^N \Vert_\infty ))\\
\times &  \Vert u_{i-1}^N - u_{i-1}\Vert _{L^2}^{1/6})] =:\tau_N^{11}(\phi)+\tau_N^{12}(\phi).
\end{align*}
The first term $\tau_N^{11}(\phi)$ converges to zero by Lebesgue's dominated convergence theorem.   For the term $\tau_N^{12}(\phi)$,   we will use  the uniform tightness  of the the vector-valued random fields $(u^{N}_{i-1},u_{i-1})$ shown in Lemma \ref{lem:ut}.  Let  $K_{i-1}=K_{i-1}(\epsilon) \subset C(\mathbb T^d;\mathbb R^2)$ be a compact set for which  $P((u_{i-1}^N, u_{i-1}) \in K^C)<\epsilon$. Then  
\begin{align*}
\tau_N^{12}(\phi) &= 
\mathbb E[\min( 1,C_\phi\exp(C_2 (\Vert u_{i-1}\Vert_\infty  +  \Vert u_{i-1}^N \Vert_\infty)) \Vert u_{i-1}^N - u_{i-1}\Vert _{L^2}^{1/6}) (1_{K}+1_{K^C})]\\
&\leq \mathbb E[\min(1, C_\epsilon \Vert u_{i-1}^N-u_{i-1}\Vert_{L^2})]^{1/6}   +  \mathbb E [1_{K^C}]
\end{align*}
by Lemma \ref{lem:tech},  Remark \ref{rem:expbounds} and 
Jensen's inequality.
Thus $\tau_N^{12}(\phi)$ converges to zero if $u_{i-1}^N-u_{i-1}$ 
converge to zero in probability on $L^2$. 

When $i=0$,  the above procedure shows then that $u_{0}^N$ converges to $u_{0}$ in probability,  because the sublayers are then constants.  
By induction, $u_{i}^N $ converges in probability to  $u_{i}$. 
\end{proof}

Prohorov's theorem \cite{Kallenberg} hands us weak convergence on 
the space of continuous functions. 
\begin{cor}
The distributions of  $u_{i}^N$ converge weakly to 
the distribution of $u_{i}$ on $C(\mathbb T^d)$ and the joint distribution of $(u_0^N,\ldots,u_J^N)$ converges weakly to the joint distribution of $(u_0,\ldots,u_J)$
 on $C(\mathbb T^d, \mathbb R^{J+1})$. 
\end{cor}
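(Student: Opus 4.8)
The plan is to combine the convergence in probability from Theorem~\ref{thm:main} with the uniform tightness from Lemma~\ref{lem:ut} and invoke Prohorov's theorem \cite{Kallenberg} via a standard subsequence argument. First I would note that each family $\{u_i^N\}_N$ is uniformly tight on $C(\mathbb T^d)$ by Lemma~\ref{lem:ut}, and since a finite Cartesian product of tight families is tight (a finite product of compact sets being compact under the Arzel\`a--Ascoli characterisation used in that lemma), the family of joint laws of $(u_0^N,\ldots,u_J^N)$ is uniformly tight on $C(\mathbb T^d;\mathbb R^{J+1})\cong C(\mathbb T^d)^{J+1}$. By Prohorov's theorem this family is relatively compact in the topology of weak convergence, so every subsequence admits a further subsequence $(u_0^{N_k},\ldots,u_J^{N_k})$ whose laws converge weakly on $C(\mathbb T^d;\mathbb R^{J+1})$ to some probability measure $\mu$.

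Next I would identify $\mu$ with the law of $(u_0,\ldots,u_J)$. The inclusion $\iota:C(\mathbb T^d;\mathbb R^{J+1})\hookrightarrow L^2(\mathbb T^d;\mathbb R^{J+1})$ is continuous, so the laws of $\iota(u_0^{N_k},\ldots,u_J^{N_k})$ converge weakly on $L^2(\mathbb T^d;\mathbb R^{J+1})$ to $\iota_*\mu$ (continuous mapping theorem). On the other hand, Theorem~\ref{thm:main} gives convergence in probability, hence in distribution, on $L^2(\mathbb T^d;\mathbb R^{J+1})$ to the law of $(u_0,\ldots,u_J)$, so uniqueness of weak limits forces $\iota_*\mu=\mathrm{Law}(u_0,\ldots,u_J)$ as measures on $L^2$. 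Since $\iota$ is a continuous injection between Polish spaces it is a Borel isomorphism onto its (Borel) image by the Lusin--Souslin theorem, hence $\iota_*$ is injective on Borel probability measures and $\mu=\mathrm{Law}(u_0,\ldots,u_J)$ on $C(\mathbb T^d;\mathbb R^{J+1})$. Equivalently, one may avoid this measure-theoretic detour by observing that the functionals $g\mapsto(\langle g_0,\phi\rangle,\ldots,\langle g_J,\phi\rangle)$, with $\phi$ ranging over a countable $L^2$-dense subset of $C(\mathbb T^d)$, are continuous on $C(\mathbb T^d;\mathbb R^{J+1})$, separate its points, and converge in distribution by Theorem~\ref{thm:main}; together with tightness this again pins down $\mu$.

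Since every subsequence of the joint laws has a further subsequence converging weakly to the same limit $\mathrm{Law}(u_0,\ldots,u_J)$, the whole sequence converges weakly on $C(\mathbb T^d;\mathbb R^{J+1})$, giving the second assertion. The first assertion then follows by applying the continuous coordinate projection $C(\mathbb T^d;\mathbb R^{J+1})\to C(\mathbb T^d)$, $g\mapsto g_i$, and the continuous mapping theorem for weak convergence.

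I expect the only genuinely delicate point to be the identification of the weak limit $\mu$: one must argue that the $C(\mathbb T^d)$-valued limit is completely determined by the weaker $L^2$-distributional information supplied by Theorem~\ref{thm:main}. The Lusin--Souslin Borel-isomorphism argument (or, equivalently, the finite-dimensional-functional argument) settles this cleanly; the remainder is the routine Prohorov relative-compactness-plus-subsequences machinery.
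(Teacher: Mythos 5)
Your proposal is correct and follows essentially the same route as the paper: uniform tightness from Lemma~\ref{lem:ut} combined with Prohorov's theorem yields subsequential weak limits on $C(\mathbb T^d;\mathbb R^{J+1})$, and the convergence in probability on $L^2$ from Theorem~\ref{thm:main} identifies every such limit as the law of $(u_0,\ldots,u_J)$, with marginals handled by continuous projections. The only cosmetic difference lies in the identification step, where you invoke injectivity of the pushforward under the continuous embedding $C(\mathbb T^d;\mathbb R^{J+1})\hookrightarrow L^2(\mathbb T^d;\mathbb R^{J+1})$ (Lusin--Souslin) or a countable separating family of linear functionals, while the paper uses characteristic functions $\mathbb E[\exp(i\langle u,\phi\rangle)]$ with smooth $\phi$; these are equivalent devices serving the same purpose.
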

\begin{proof}
The random fields $u_{i}^N$ are tight on 
$C(\mathbb T^d)$ by Lemma \ref{lem:ut}. By Prohorov's theorem, the closure of their distributions forms a sequentially compact set, implying the existence of weak limit. By Theorem \ref{thm:main}, each weakly converging subsequence of the distributions has the same limit, that is, the distribution of $u_{i}$.  Indeed, since convergence in probability on $L^2(\mathbb T^d)$ implies weak convergence on $L^2(\mathbb T^d)$, the characteristic functions 
$
\mathbb E[\exp(i\langle u^N_{i}, \phi\rangle )]
$ 
converge to $\mathbb E[\exp(i\langle u_{i}, \phi\rangle )]$ for all $\phi\in L^2(\mathbb T^d)$. The set of bounded continuous functions $u\mapsto \exp(i\langle u, \phi\rangle)
$, where $\phi$ are smooth on $\mathbb T^d$, separate the functions in $C(\mathbb T^d)$. Hence, the limits of these characteristic functions are enough to identify the weak limit on $C(\mathbb T^d)$. The joint distribution is handled similarly.
\end{proof}

We recall a posterior convergence result from
 \cite{Lasanen2012} with  notation used in 
 \cite{Stuart2010}.
\begin{thm}
Let the posterior distribution $\mu^\mathbf{y}$ of an unknown  $u$ given an observation $\mathbf{y}$ have the Radon-Nikodym density 
\begin{align*}
\frac{d\mu^{\mathbf y}}{d\mu^0} (u)
\propto  \exp(-\Phi(u,\mathbf{y})) 
\end{align*}
with respect to the prior distribution $\mu^0$ of $u$, where 
 $\Phi(\cdot, \mathbf{y}) \geq -C_{\mathbf y}$.  Let $u^N$ be an approximation of $u$ and let the posterior distribution of $u^N$ given an observation   $\mathbf{y}_N$
have also the Radon-Nikodym density 
\begin{align*}
\frac{d\mu^{\mathbf y_N}_N }{d\mu^0_N} (u)
\propto  \exp(-\Phi(u, \mathbf{y}_N)) 
\end{align*}
with respect to the prior distribution $\mu^0_N $ of $u^N$. 

If the prior distributions  $\mu^0_N$ converge  weakly to $\mu^0$, then the posterior distributions  $\mu_N^\mathbf{y}$ converge weakly to $\mu^\mathbf{y}$. 
\end{thm}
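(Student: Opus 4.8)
The plan is to test the posteriors against an arbitrary bounded continuous functional and to reduce everything to the assumed weak convergence of the priors. Fix a bounded continuous $f$ on the relevant function space, and write $Z_N = \int \exp(-\Phi(u,\mathbf{y}_N))\,d\mu^0_N(u)$ and $Z = \int \exp(-\Phi(u,\mathbf{y}))\,d\mu^0(u)$ for the normalising constants. Then
\begin{equation*}
\int f\,d\mu_N^{\mathbf y_N} = \frac{1}{Z_N}\int f(u)\exp(-\Phi(u,\mathbf{y}_N))\,d\mu^0_N(u),
\qquad
\int f\,d\mu^{\mathbf y} = \frac{1}{Z}\int f(u)\exp(-\Phi(u,\mathbf{y}))\,d\mu^0(u),
\end{equation*}
so it is enough to prove that both the numerator and the denominator converge and that $Z>0$.

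First I would record that $u\mapsto\exp(-\Phi(u,\mathbf{y}))$ is a bounded continuous function: the lower bound $\Phi(\cdot,\mathbf{y})\geq -C_{\mathbf y}$ gives $0<\exp(-\Phi(\cdot,\mathbf{y}))\leq \exp(C_{\mathbf y})$, while continuity in $u$ is part of the standing hypotheses on the potential $\Phi$ borrowed from \cite{Stuart2010} (in the inverse problems of this paper $\Phi(u,\mathbf{y})=\frac12|\mathbf{y}-\mathcal G(u)|_{\mathbf E}^2$ with $\mathcal G$ continuous). Hence $f(u)\exp(-\Phi(u,\mathbf{y}))$ is again bounded and continuous, and applying the assumed weak convergence $\mu^0_N\rightharpoonup\mu^0$ to the two test functions $\exp(-\Phi(\cdot,\mathbf{y}))$ and $f\exp(-\Phi(\cdot,\mathbf{y}))$ gives $Z_N\to Z$ and $\int f\exp(-\Phi)\,d\mu^0_N\to\int f\exp(-\Phi)\,d\mu^0$. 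Since $\mu^{\mathbf y}$ is a genuine probability measure we must have $Z>0$, so $Z_N$ is eventually bounded away from $0$, the ratio above converges to $\int f\,d\mu^{\mathbf y}$, and, $f$ being arbitrary, $\mu_N^{\mathbf y}\rightharpoonup\mu^{\mathbf y}$.

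The only real subtlety, and the place to be careful, is the dependence on the data index. In the application of this paper the observation is fixed, $\mathbf{y}_N\equiv\mathbf{y}$, so the argument is exactly the three-line ratio computation above combined with the Corollary, which supplies $\mu^0_N\rightharpoonup\mu^0$. If $\mathbf{y}_N$ is genuinely allowed to vary one cannot use the single test function $\exp(-\Phi(\cdot,\mathbf{y}))$ directly; I would then additionally assume $\mathbf{y}_N\to\mathbf{y}$ together with the local Lipschitz bound on $\Phi$ in its data argument from \cite{Stuart2010}, bound $\exp(-\Phi(u,\mathbf{y}_N))-\exp(-\Phi(u,\mathbf{y}))$ uniformly on the mass-carrying compact sets provided by tightness of $\{\mu^0_N\}$ (itself a consequence of weak convergence via Prohorov's theorem), and check that the error introduced by replacing $\mathbf{y}_N$ by $\mathbf{y}$ vanishes, reducing to the fixed-data case. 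No step beyond this last reduction presents any difficulty, since everything else is a direct application of the portmanteau characterisation of weak convergence.
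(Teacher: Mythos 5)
Your argument is correct, but it does not really have a counterpart in the paper to be measured against: the paper does not prove this theorem at all, it merely recalls it from \cite{Lasanen2012} (with the notation of \cite{Stuart2010}) and immediately applies it to the weakly convergent priors supplied by the Corollary. Your ratio-plus-portmanteau proof is the standard elementary route for the case actually needed here: write $\int f\,d\mu_N^{\mathbf y}$ as the quotient of $\int f e^{-\Phi(\cdot,\mathbf y)}\,d\mu^0_N$ by $Z_N=\int e^{-\Phi(\cdot,\mathbf y)}\,d\mu^0_N$, pass to the limit in numerator and denominator using $\mu^0_N\rightharpoonup\mu^0$ and the bound $0<e^{-\Phi(\cdot,\mathbf y)}\leq e^{C_{\mathbf y}}$, and note $Z>0$. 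Two of your side remarks deserve emphasis rather than apology. First, the continuity of $\Phi(\cdot,\mathbf y)$ that you import from the standing assumptions of \cite{Stuart2010} is not cosmetic: with only the lower bound $\Phi(\cdot,\mathbf y)\geq -C_{\mathbf y}$ the statement is false (a discontinuous bounded potential reweighting two-point priors $\tfrac12(\delta_{-1/N}+\delta_1)\rightharpoonup\tfrac12(\delta_0+\delta_1)$ already produces posteriors converging to the wrong limit), so at least $\mu^0$-a.s.\ continuity must be counted among the hypotheses; in this paper's application $\Phi(\mathbf u_J,\mathbf y)=\tfrac12\Vert\mathbf E^{-1/2}(\mathbf y-\mathbf H\mathbf u_J)\Vert^2$ is continuous, so your argument applies verbatim. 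Second, your reading of the $\mathbf y_N$ versus $\mathbf y$ discrepancy is the right one: in the paper the data are fixed, and the varying-data version reduces to the fixed-data case exactly as you sketch (local Lipschitz dependence of $\Phi$ on the data plus tightness of $\{\mu^0_N\}$). What the citation to \cite{Lasanen2012} buys over your direct argument is generality -- that reference establishes posterior convergence under weaker and more technical conditions on the noise model and potential -- but for the linear-Gaussian likelihood used in this article your self-contained proof suffices.
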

Especially, the joint prior distribution of 
$(u_0^N,\ldots,u_J^N)$ converge weakly on $C(\mathbb T^d, 
\mathbb R^{J+1})$ to the joint distribution of $(u_0,\ldots, u_J)$. Hence, the corresponding posteriors converge also weakly.

\section{Bayesian inference algorithm}\label{sec:Sampling}
In this section, we will develop a Bayesian inference procedure to sample the Fourier coefficients from a posterior distribution where the prior is given by a multi-layered Gaussian fields. Let us work directly in the Fourier coefficient $\mathbf{u}_J$ of $u_J$, and denote by $\mu^0(d\mathbf{u}_J)=\mathbb{P}(d\mathbf{u}_J)$ and   $\mu^{\mathbf{y}}(d\mathbf{u}_J)  = \mathbb{P}(d \mathbf{u}_J\mid\mathbf{y})$ the prior and the posterior distribution  of $u_J$, the unknown target field, respectively, when the measurement is given by
\begin{align}
\mathbf{y}=\mathbf{H}\mathbf{u}_J+ \mathbf{e}.\label{eq:Linear_measurement}
\end{align}
In this equation, $\mathbf{y}$ is a vector which contains all of the measurements. The elements of the matrix $\mathbf{H}$ correspond to the linear mappings $\{h_k\}$ for the respective Fourier components. With the number of measurements taken is given by $m$,  the measurement noise $\mathbf{e}$ is an $m$-dimensional  Gaussian random vector with zero mean and covariance $\mathbf{E}$. The posterior distribution $\mu^{\mathbf{y}}$ will be absolutely continuous with respect to the prior $\mu^0$, and the density of the posterior with respect to the prior  is given by
\begin{align}
\frac{d\mu^{\mathbf{y}}}{d \mu^0} (\mathbf{u}_J) =& \frac{1}{Z}\exp\left(-\Phi(\mathbf{u}_J,\mathbf{y})\right),
\end{align}	
where $\Phi(\mathbf{u}_J,\mathbf{y}) :=  \frac{1}{2}\left\| \mathbf{E}^{-1/2}(\mathbf{y}-\mathbf{H}\mathbf{u}_J)\right\|^2$ is the potential function and the normalization constant $Z :=  \int \exp(-\Phi(\mathbf{u}_J,\mathbf{y})) \mu^0(d \mathbf{u}_J)$.
The posterior probability $\mu^{\mathbf{y}}(d\mathbf{u}_J)$ can be written in the following form
\begin{equation}
\mu^{\mathbf{y}}(d\mathbf{u}_J) \propto \exp\left(-\Phi(\mathbf{u}_J,\mathbf{y})\right) \mu^0(d\mathbf{u}_J). \label{eq:posterior}
\end{equation}

In the next section, we describe the MCMC algorithm to sample from the posterior distribution. 

\subsection{Non-centered algorithm}
Consider the case of hierarchical prior distribution $\mu(d\mathbf{u}_J)$  with $J$  hyperprior layers which we construct as
\begin{subequations}
	\begin{align}
		\mathbf{L}_0 \mathbf{u}_0 &= \mathbf{w}_0,\\
		\mathbf{L}(\mathbf{u}_{j-1}) \mathbf{u}_{j} &= \mathbf{w}_{j}, \quad j = 1, \cdots, J,
	\end{align}\label{eqs:Lu_relations}		
\end{subequations}
where $\mathbf{u}_j$ contains the Fourier coefficients of the field $u_j$, for $j=0,\ldots,J$. 
We fix $g(x) = \exp(-x)$. Since $u_{-1} = \ln(\kappa_0)$ is a constant, we can write, $\mathbf{L}_0 = \mathbf{L}( \mathbf{u}_{-1} )$, where $k$-th element of $\mathbf{u}_{-1}$ is equal to $\ln(\kappa_0) \delta_{k,0}$, where $\delta_
{k,0}$ is the Kronecker delta. By \eqref{eqs:Lu_relations}, we can define a linear transformation from $\mathbf{w}_j$ to $\mathbf{u}_j$ for $j>0$ as follows: 
	\begin{align}
		\mathbf{u}_j = \tilde{U}(\mathbf{w}_j,\mathbf{u}_{j-1}) &:= \mathbf{L}(\mathbf{u}_{j-1})^{-1}\mathbf{w}_{j}. \label{eq:U_w_u}
	\end{align}

Using \eref{eq:U_w_u} we can define a transformation from $\mathbf{w}$ to $\mathbf{u}$ as follows
\begin{align}
	\mathbf{u} = U(\mathbf{w}) &=  (\tilde{U}(\mathbf{w}_0,\mathbf{u}_{-1}),
	\tilde{U}(\mathbf{w}_1,\cdot) \circ \tilde{U}(\mathbf{w}_0,\mathbf{u}_{-1}),
	\ldots,
	\tilde{U}(\mathbf{w}_{J},\cdot)\circ \ldots \circ \tilde{U}(\mathbf{w}_0,\mathbf{u}_{-1})). \label{eq:U}
\end{align}
The dependence of $\mathbf{u}_{j}$ on $\mathbf{u}_{j-1}$, $j = 1,\ldots,J$, leads us to 
\begin{equation}
\mu^0 (d \mathbf{u}_J) = \mathbb{P} \left(d \mathbf{u}_0\right) \prod_{j=1}^{J} \mathbb{P}
\left(d\mathbf{u}_{j}\mid \mathbf{u}_{j-1}\right). \label{eq:prior}
\end{equation}

When evaluating the posterior function \eqref{eq:posterior}, it is necessary to compute the log determinant of $\mathbf{L}(\mathbf{u}_{j-1})$ in $\mathbb{P}
(d\mathbf{u}_{j}| \mathbf{u}_{j-1})$ for each $1<j\leq J$, respectively. These calculations are expensive in general \cite{dong2017}. There is also a singularity issue if we sample directly from $\mathbb{P}(d\mathbf{u}|\mathbf{y})$ if $N$ approaches infinity \cite{Dunlop2018}. We can avoid these issues by using the reparametrization \eref{eq:U_w_u}, where instead of sampling the Fourier coefficients $\mathbf{u}$,  we sample the Fourier coefficient of the noises $\mathbf{w}$, which is then called \emph{non-centered} algorithm \cite{Papaspiliopoulos2007,Yu2011}. That is, we can write:
	\begin{align}
	\dfrac{d\tilde{\mu}^{\mathbf{y}}}{d \tilde{\mu}^0} (\mathbf{w}) =& \frac{1}{\tilde{Z}}\exp\left(-\tilde{\Phi}(\mathbf{w},\mathbf{y})\right) := \frac{1}{\tilde{Z}}\exp\left(-\Phi(U(\mathbf{w}),\mathbf{y})\right).
	\end{align}\label{eq:non_centered}
In this equation, $\tilde{\mu}^0 := \mathbb{P}(d\mathbf{w})$ and $\tilde{\mu}^{\mathbf{y}}:= \mathbb{P}(d\mathbf{w}|\mathbf{y})$ are the prior and posterior of the $\mathbf{w}$, respectively. The preconditioned Crank-Nicolson (pCN) algorithm \cite{Cotter2013} can be used to sample from $\mathbb{P}(d\mathbf{w}|\mathbf{y})$, and it is well defined even for the case of $N$ goes to infinity, using the fact that the prior for $\mathbf{w}$ is a standard Gaussian distribution. One implementation of the pCN algorithm is given by Algorithm \ref{Noncentered_algorithm}. Due to linearity assumption of the forward model \eref{eq:Linear_measurement}, we can leverage the standard Gaussian regression in addition to the non-centered reparametrization. This procedure has been proposed in \cite{Dunlop2018} for general deep Gaussian fields. Here, we adapt this algorithm for our Galerkin method. The resulting algorithm is a Metropolis within Gibbs type \cite{DaniGamerman2006} where the Fourier coefficients $\{\mathbf{u}_j\}, j=0,\ldots,J-1$ are sampled using a PCN algorithm via reparametrization \eqref{eq:U_w_u}, and the Fourier coefficients for the last layer  $\mathbf{u}_J$ are sampled directly. The detail is given as follows. By marginalization of $\mathbf{u}_{J-1}$, we can write 
$
	\mathbb{P}(d\mathbf{u}_{J}|\mathbf{y}) = \int \mathbb{P}\left(d\mathbf{u}_{J}\mid\mathbf{u}_{J -1},\mathbf{y}\right) \mathbb{P}\left(d\mathbf{u}_{J -1}\mid\mathbf{y}\right).
$ Furthermore, from the standard Gaussian regression, we can sample directly from $\mathbb{P}(d\mathbf{u}_{J}|\mathbf{u}_{J -1},\mathbf{y})$ by using:
\begin{align}
	V(\mathbf{u}_{J -1},\mathbf{y}) :=& \begin{pmatrix}  \mathbf{E}^{-1/2} \mathbf H \\ \mathbf L(\mathbf{u}_{J -1})\end{pmatrix}^\dagger \left(\begin{pmatrix}\mathbf{E}^{-1/2} \mathbf y \\ 0 \end{pmatrix} + \tilde{\mathbf{v}}\right), \label{eq:V_sample}\\
	\mathbf{u}_{J -1} =& \tilde{U}(\mathbf{w}_{J-1},\cdot)\circ \cdots \circ \tilde{U}(\mathbf{w}_0,\mathbf{u}_{-1}),\nonumber\\
	\tilde{\mathbf{v}} \sim& N(0,\mathbf{I}).\nonumber
\end{align} 

 Writing $\mathbf{y} = \mathbf{H}\mathbf{L}(\mathbf{u}_{J -1})^{-1}\mathbf{w}_{J} + \mathbf{e}$, the conditional probability density of $\mathbf{y}$ given $\mathbf{u}_{J -1}$ is given by $p(\mathbf{y}|\mathbf{u}_{J -1}) = {N}(\mathbf{y}| 0,\mathbf{H}\mathbf{L}(\mathbf{u}_{J -1})^{-1}\mathbf{L}(\mathbf{u}_{J -1})^{-\top}\mathbf{H}^\top+\mathbf{E})$. Let $\bar{\mathbf{w}} = (\mathbf{w}_0,\ldots,\mathbf{w}_{J-1})$. To obtain samples from $\mathbb{P}(d\mathbf{u}_{J-1} |\mathbf{y})$ we can use reparametrization \eqref{eq:U_w_u} and  Algorithm \ref{Noncentered_algorithm} to sample from $\mathbb{P}(d\bar{\mathbf{w}} |\mathbf{y})$. The probability distribution $\mathbb{P}(d\bar{\mathbf{w}} |\mathbf{y})$ is given as follows:
\begin{subequations}
	\begin{align}
	\mathbb{P}(d\bar{\mathbf{w}} |\mathbf{y}) \propto& \exp\left(-\tilde{\Psi}(\bar{\mathbf{w}},\mathbf{y})\right) \mathbb{P}(d\bar{\mathbf{w}}),\\
	\tilde{\Psi}(\bar{\mathbf{w}},\mathbf{y}) := \Psi\left(U(\bar{\mathbf{w}}),\mathbf{y}\right) =& \frac{1}{2}\norm{\mathbf{y}}_{\mathbf{Q}}^2 + \frac{1}{2}\log \det(\mathbf{Q}),\\
	\mathbf{Q} =& \mathbf{H}\mathbf{L}(\mathbf{u}_{J -1})^{-1}\mathbf{L}(\mathbf{u}_{J -1})^{-\top}\mathbf{H}^\top+\mathbf{E}.
	\end{align}
\end{subequations}
To sample from $\mathbb{P}(d\bar{\mathbf{w}} |\mathbf{y})$ using Algorithm \ref{Noncentered_algorithm}, we use $J -1$, and $\tilde{\Psi}$ for $J $ and $\tilde{\Phi}$, respectively. 

\begin{algorithm}[h]
	\SetKwData{Layer}{Layer}
	\SetKwData{logRatio}{logRatio}
	\SetKwData{accepted}{accepted}
	\SetKwFunction{draw}{draw}
	\SetKwFunction{sample}{sample}
	\SetKwFunction{logDet}{logDet}
	\SetKwFunction{constructLMatrix}{constructLMatrix}
	\DontPrintSemicolon
   \SetKwInOut{Input}{Input}
   \SetKwInOut{Output}{Output}
   \Input{$J,\mathbf{w},\mathbf{y},\tilde{\Phi}(\mathbf{w},\mathbf{y})$}
	\Output{\accepted, $\mathbf{w}$}
	\accepted = $0$\;
	\draw $\mathbf{w}'\sim N(0,\mathbf{I})$\;
	$\tilde{\mathbf{w}} = \sqrt{1-s^2}\mathbf{w} + s \mathbf{w}'$\;
	\logRatio = $\tilde{\Phi}(\mathbf{w},\mathbf{y})-\tilde{\Phi}(\tilde{\mathbf{w}},\mathbf{y})$\;
	\draw $\omega \sim \text{Uniform}[0,1]$\;
	\If{\logRatio $>\ln(\omega)$}{
	    $\mathbf{w}  = \tilde{\mathbf{w}}$\;
		\accepted = 1\;
	}
	
	\caption{preconditioned Crank-Nicolson algorithm.\label{Noncentered_algorithm}}
\end{algorithm}

\section{Numerical results}\label{sec:Application}
In this section, we present examples of Bayesian inversion using a multi-layer Gaussian prior presented in the previous section. 
Our main focus in this section is to show the effectiveness of the proposed finite-dimensional approximation method for selected examples. Therefore, we will not discuss the properties of the MCMC algorithm used to generate the samples as they are based on the MCMC algorithms described in \cite{Cui2016,Dunlop2018}. We aim at acceptance ratio between 25-50 \%, which is obtained by tuning the pCN step size $s$ in Algorithm \ref{Noncentered_algorithm}.  %
Our experience in the numerical implementations below indicates that the MCMC algorithm based on the pCN and non-centered algorithm is quite robust. For one and two hyperprior layers implementation, the step size $s$ does not need to be extremely small. The step sizes $s$ in the first and second examples are varying around $10^{-1}$ to $10^{-3}$.
	
	\subsection{Continuous-time random processes with finite-time discrete measurements}\label{ssec:OneD}
	In this section, we consider the application of the proposed technique to address the non-parametric denoising of two piecewise smooth signals. The first test signal is a rectangular shape signal  where the value is zero except on interval $[0.2,0.8]$. The second test signal is a combination between a smooth bell shaped signal and a rectangular signal \cite{Roininen2016}:
	\begin{align}
		\upsilon_{rect}(t) =&
		\begin{cases}
			1 , & t \in [0.2,0.8],\\
			0 , & \text{otherwise}.
		\end{cases}, & \upsilon_{bell,rect}(t) =
		\begin{cases}
			\exp\left(4-\frac{1}{2t - 4t^2}\right) , & t \in (0,0.5),\\
			1 , & t \in [0.7,0.8],\\
			-1 , & t \in (0.8,0.9],\\
			0 , & \text{otherwise}.
		\end{cases} 
	\end{align}

	Previously, in \cite{Emzir2019}, for $J=1$ and with the unknown signal  $\upsilon_{bell,rect}(t) $, we have demonstrated that upon increasing the number of the Fourier basis functions, the $L^2$ error between the ground truth and the posterior sample mean decreased significantly.  

	In this section, we will compare the estimation results using one hyperprior and two hyperprior layers respectively. To allow high variation near points of discontinuities, the length-scale of $\upsilon$ is expected to be smaller around the discontinuities than the rest of the domain. 	We take measurement of on one dimensional grid of $2^{8}$ points and set the standard deviation of the measurement noise to be $0.1$. The proposed algorithm is tested with $N = 2^6 -1$. After estimation, we reconstruct the signal using inverse Fourier transform with a finer grid with $2^{8}$ points equally spaced between zero and one. We take ten million samples for each MCMC run.

	To have a fair comparison, we use the same measurement record for each run with different $J$. Figures \ref{figs:SimulationResult1D_box} and \ref{figs:SimulationResult1D} show the reconstructed signals and their respective length-scale estimations. It can be clearly seen that the addition of another hyper prior layer improves the reconstruction result for the unknown signals. For $J =2$,  although there are no sudden drops near the points of discontinuities, the length-scale value is sufficiently high in the a smooth part of $\upsilon$ signal, and substantially low near the points of discontinuities. This variation translates to a better smoothness detection, as can be seen in Figures \ref{fig:vt_3Layer_box}, \ref{fig:vt_3Layer},  \ref{fig:u_1t_3Layer_box}, and \ref{fig:u_1t_3Layer}. In contrast, Figures \ref{fig:vt_2Layer_box} and \ref{fig:vt_2Layer} show that when $J=1$, the posterior sample means are considerably overfitting the data on the smooth part of the signals. Although the length-scale drops suddenly near the points of discontinuities of the ground truth $\upsilon$, the variation of $\ell$ is limited (see \Fref{fig:u_0t_2Layer_box} and \ref{fig:u_0t_2Layer}). This contributes to a decreased smoothness in the smooth region of the sample mean. The quantitative performance is given in Table \ref{tbl:MSE_PSNR_1D}.

	\newcommand\BoxNLThreeFolder{result-25-Nov-2019_19_37}
	\newcommand\SPDEBoxNLThreeNFourier{64}
	\newcommand\SPDEBoxNLThreeNExtended{256}
	\newcommand\SPDEBoxNLThreeNSamples{10000000}
	\FPeval{SPDEBoxNLThreeMSE}{round(0.003319353651622125:3)}
	\FPeval{SPDEBoxNLThreeLTwo}{round(0.9218213139298006:3)}
	\FPeval{PSNRSPDEBoxNLThree}{round(25.605341346780193:3)}
	\FPeval{SPDEBoxNLThreeLTwoFreq}{round(0.029549868808016667:3)}
	\FPeval{PSNRSPDEBoxNLThreeFreq}{round(49.466565200896284:3)}

	\newcommand\BellNLThreeFolder{result-26-Nov-2019_04_37}
	\newcommand\SPDEBellNLThreeNFourier{64}
	\newcommand\SPDEBellNLThreeNExtended{256}
	\newcommand\SPDEBellNLThreeNSamples{10000000}
	\FPeval{SPDEBellNLThreeMSE}{round(0.008501572872881807:3)}
	\FPeval{SPDEBellNLThreeLTwo}{round(1.475263588467411:3)}
	\FPeval{PSNRSPDEBellNLThree}{round(21.80087287178477:3)}
	\FPeval{SPDEBellNLThreeLTwoFreq}{round(0.03356259772835891:3)}
	\FPeval{PSNRSPDEBellNLThreeFreq}{round(48.640554077045095:3)}

	\newcommand\BoxNLTwoFolder{result-26-Nov-2019_17_10}
	\newcommand\SPDEBoxNLTwoNFourier{64}
	\newcommand\SPDEBoxNLTwoNExtended{256}
	\newcommand\SPDEBoxNLTwoNSamples{10000000}
	\FPeval{SPDEBoxNLTwoMSE}{round(0.00425501130865325:3)}
	\FPeval{SPDEBoxNLTwoLTwo}{round(1.0436871633852896:3)}
	\FPeval{PSNRSPDEBoxNLTwo}{round(24.3253575853686:3)}
	\FPeval{SPDEBoxNLTwoLTwoFreq}{round(0.03716689405477204:3)}
	\FPeval{PSNRSPDEBoxNLTwoFreq}{round(47.27303911781477:3)}

	\newcommand\BellNLTwoFolder{result-26-Nov-2019_16_02}
	\newcommand\SPDEBellNLTwoNFourier{64}
	\newcommand\SPDEBellNLTwoNExtended{256}
	\newcommand\SPDEBellNLTwoNSamples{10000000}
	\FPeval{SPDEBellNLTwoMSE}{round(0.009108251289068831:3)}
	\FPeval{SPDEBellNLTwoLTwo}{round(1.5269945415755815:3)}
	\FPeval{PSNRSPDEBellNLTwo}{round(21.603239507383538:3)}
	\FPeval{SPDEBellNLTwoMSEFreq}{round(0.003319353651622125:3)}
	\FPeval{SPDEBellNLTwoLTwoFreq}{round(0.04034508163432716:3)}
	\FPeval{PSNRSPDEBellNLTwoFreq}{round(47.1435773154203:3)}

	\begin{table}[]
		\centering
		\begin{tabular}{|c|l|l|l|l|}
		\hline
		\multicolumn{1}{|l|}{} & \multicolumn{2}{c|}{Rectangle} & \multicolumn{2}{c|}{Bell-Rectangle} \\ \hline
		 $J $ & \multicolumn{1}{c|}{$L^2$ error} & \multicolumn{1}{c|}{PSNR} & \multicolumn{1}{c|}{$L^2$ error} & \multicolumn{1}{c|}{PSNR} \\ \hline
		1 & \SPDEBoxNLTwoLTwo & \PSNRSPDEBoxNLTwo & \SPDEBellNLTwoLTwo & \PSNRSPDEBellNLTwo  \\ \hline
		2 & \SPDEBoxNLThreeLTwo & \PSNRSPDEBoxNLThree & \SPDEBellNLThreeLTwo & \PSNRSPDEBellNLThree  \\ \hline
		\end{tabular} 
		\caption{Quantitative performance comparison of a shallow Gaussian fields prior inversion for one dimensional signal in Section \ref{ssec:OneD}. \label{tbl:MSE_PSNR_1D}}
		\end{table}

	\begin{figure}
		\centering
		\subfloat[]{\label{fig:vt_3Layer_box}\includegraphics[width=0.5\linewidth]{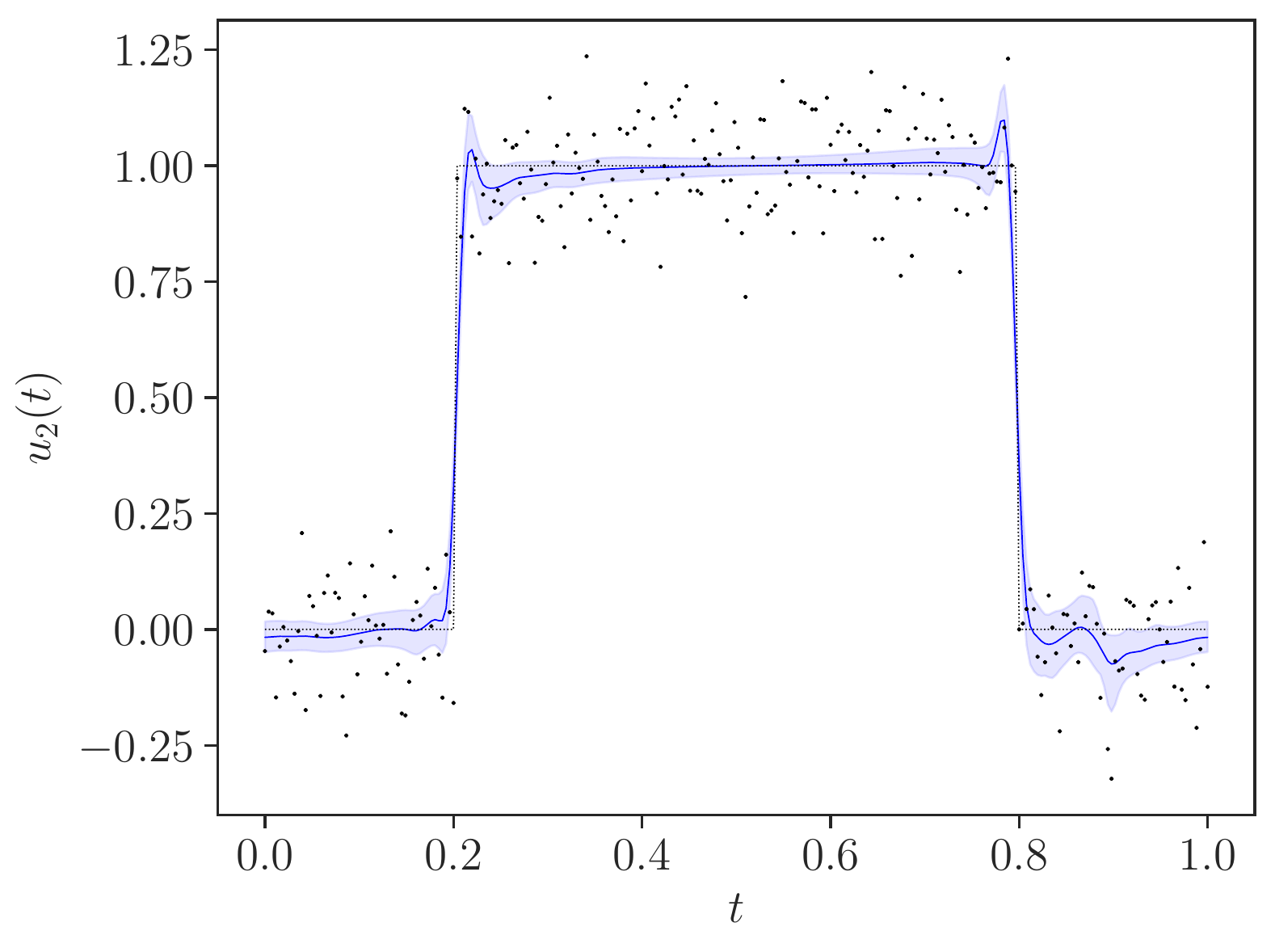}}
		\subfloat[]{\label{fig:vt_2Layer_box}\includegraphics[width=0.5\linewidth]{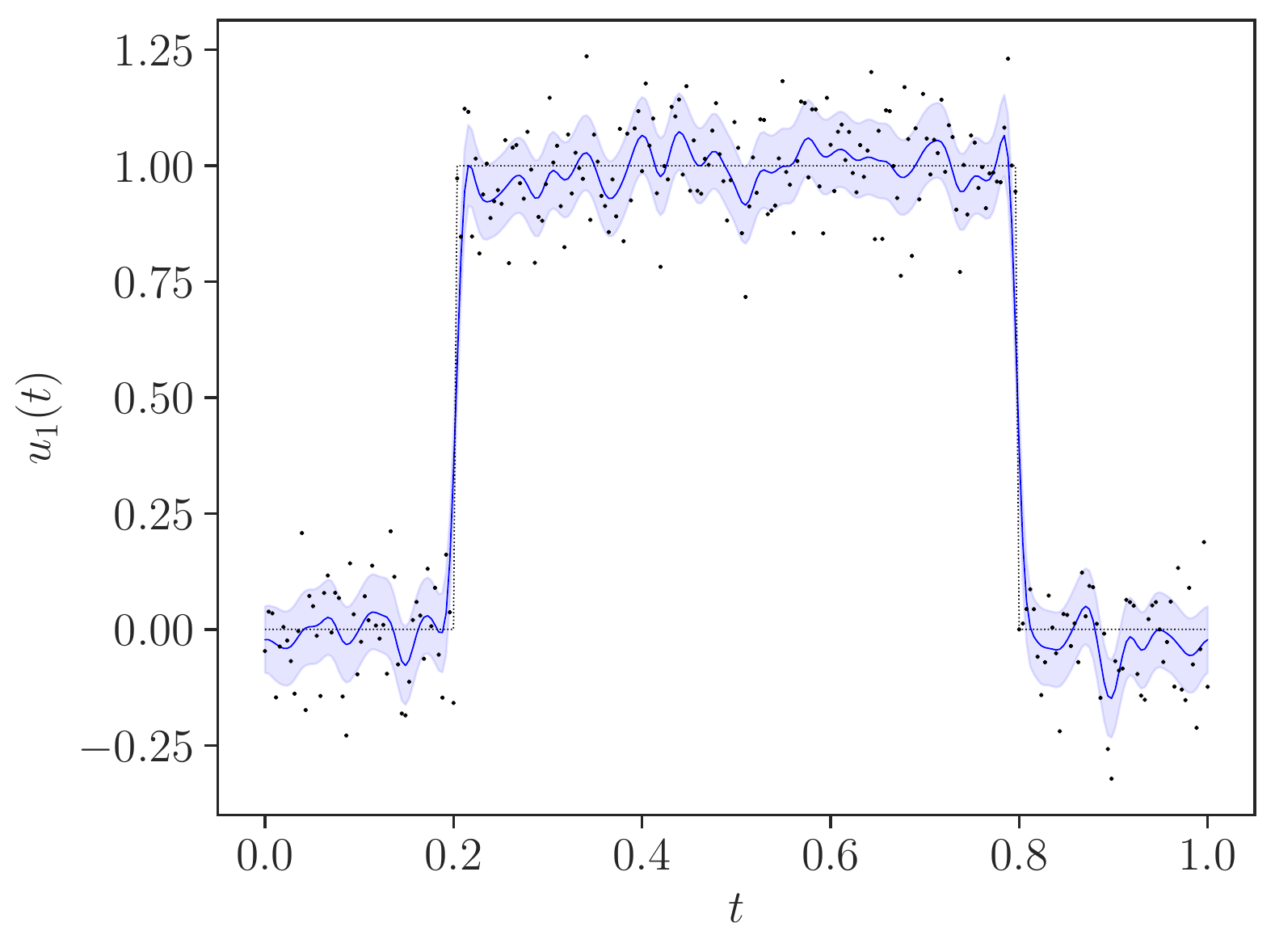}}\\
		\subfloat[]{\label{fig:u_1t_3Layer_box}\includegraphics[width=0.5\linewidth]{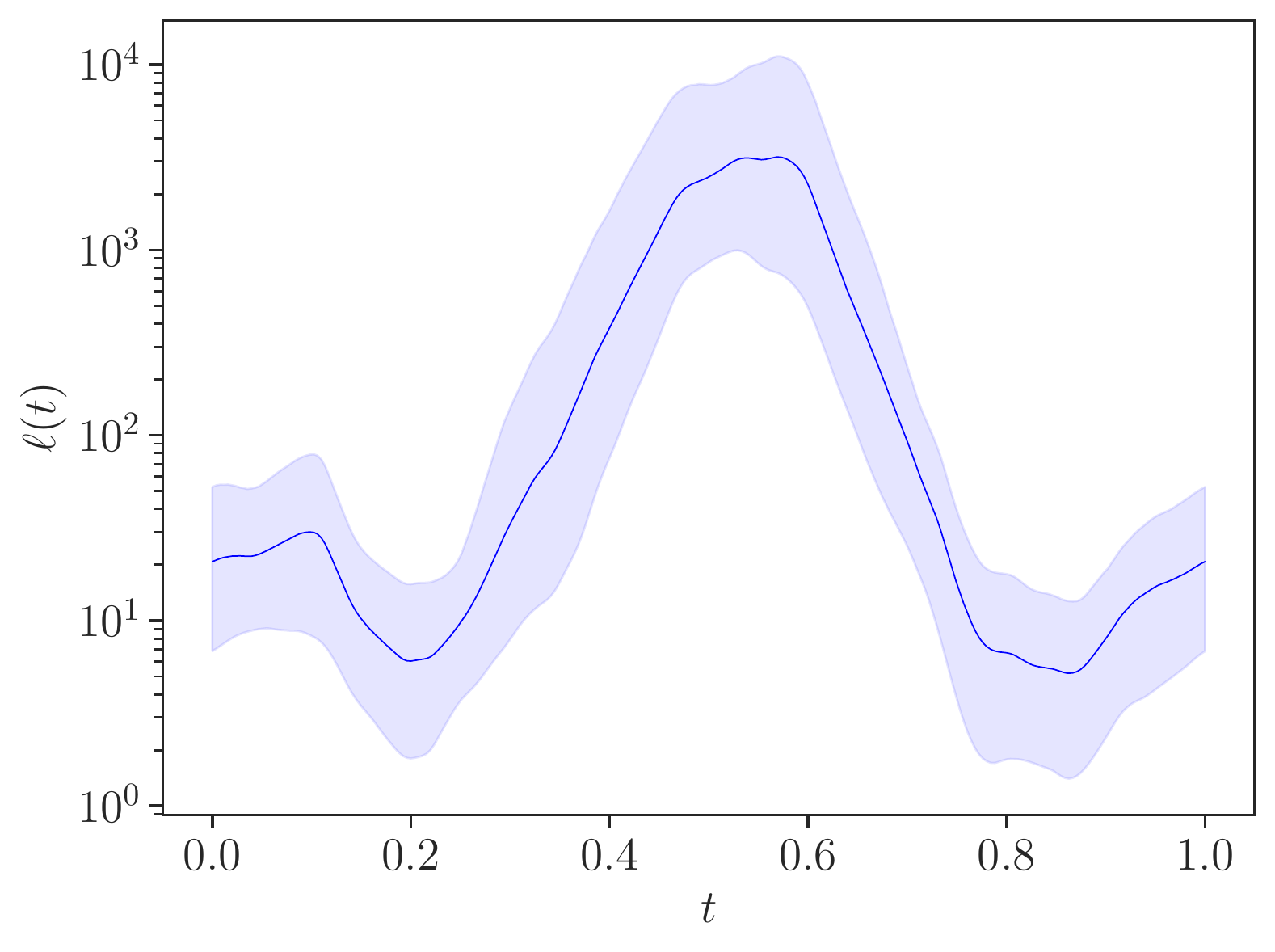}}
		\subfloat[]{\label{fig:u_0t_2Layer_box}\includegraphics[width=0.5\linewidth]{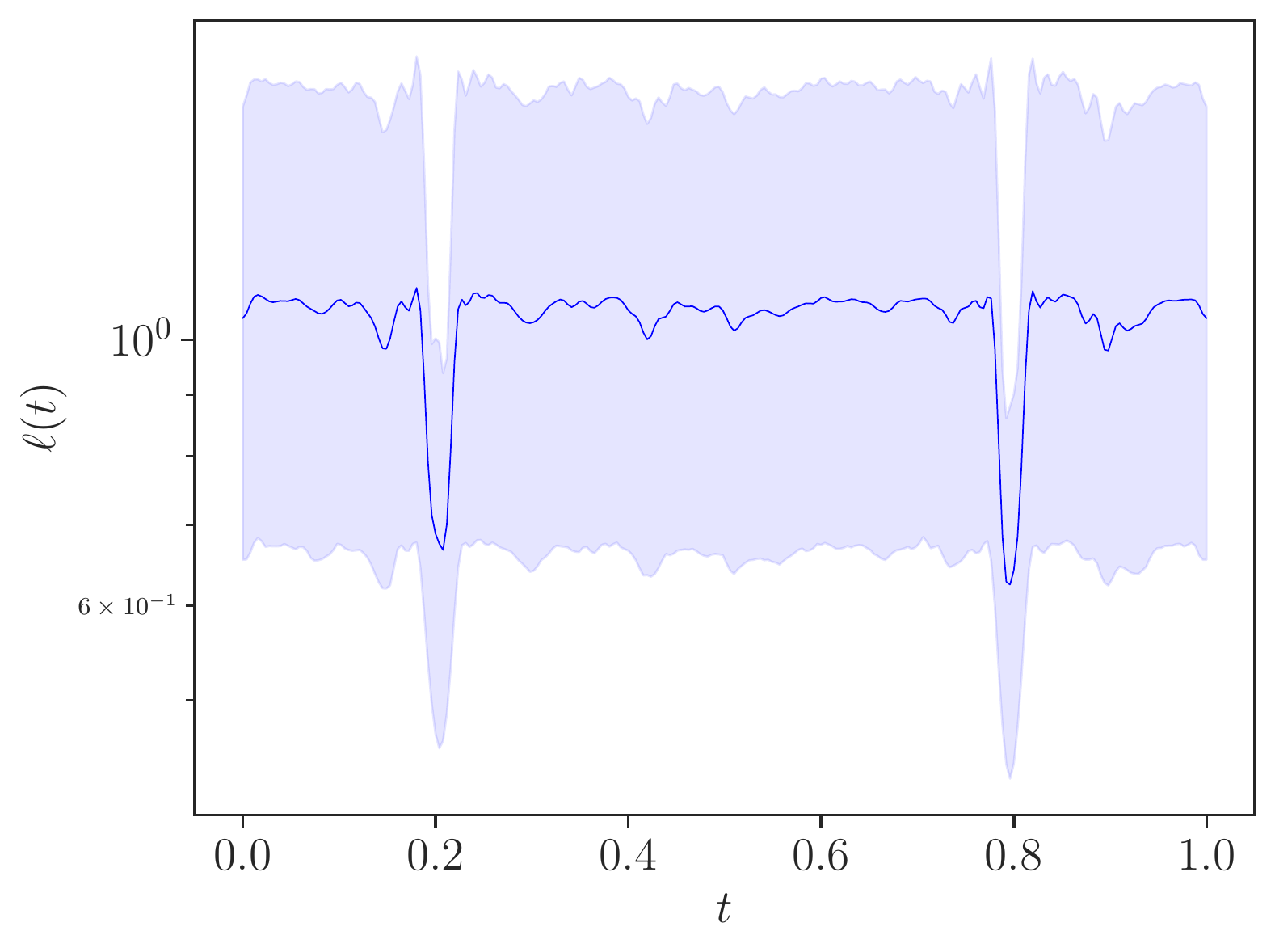}}

		\caption{Simulation results of example in Section \ref{ssec:OneD}. Figures \ref{fig:vt_3Layer_box}, \ref{fig:u_1t_3Layer_box}, \ref{fig:vt_2Layer_box}, and \ref{fig:u_0t_2Layer_box} describe the lowest random fields from $u$ for $J=2$ and $J=1$, and their length-scales. In \Fref{fig:vt_3Layer_box} and \ref{fig:vt_2Layer_box}, the blue and the black lines are the mean Fourier inverse of the samples in the 95 \% confidence shades of the lowest layer and the original unknown signals respectively. The blue line and the shades in the remaining figures are sample means and 95 \% confidence interval of the estimated length-scales.\label{figs:SimulationResult1D_box}}

		\end{figure}

		\begin{figure}
			\centering
			\subfloat[]{\label{fig:vt_3Layer}\includegraphics[width=0.5\linewidth]{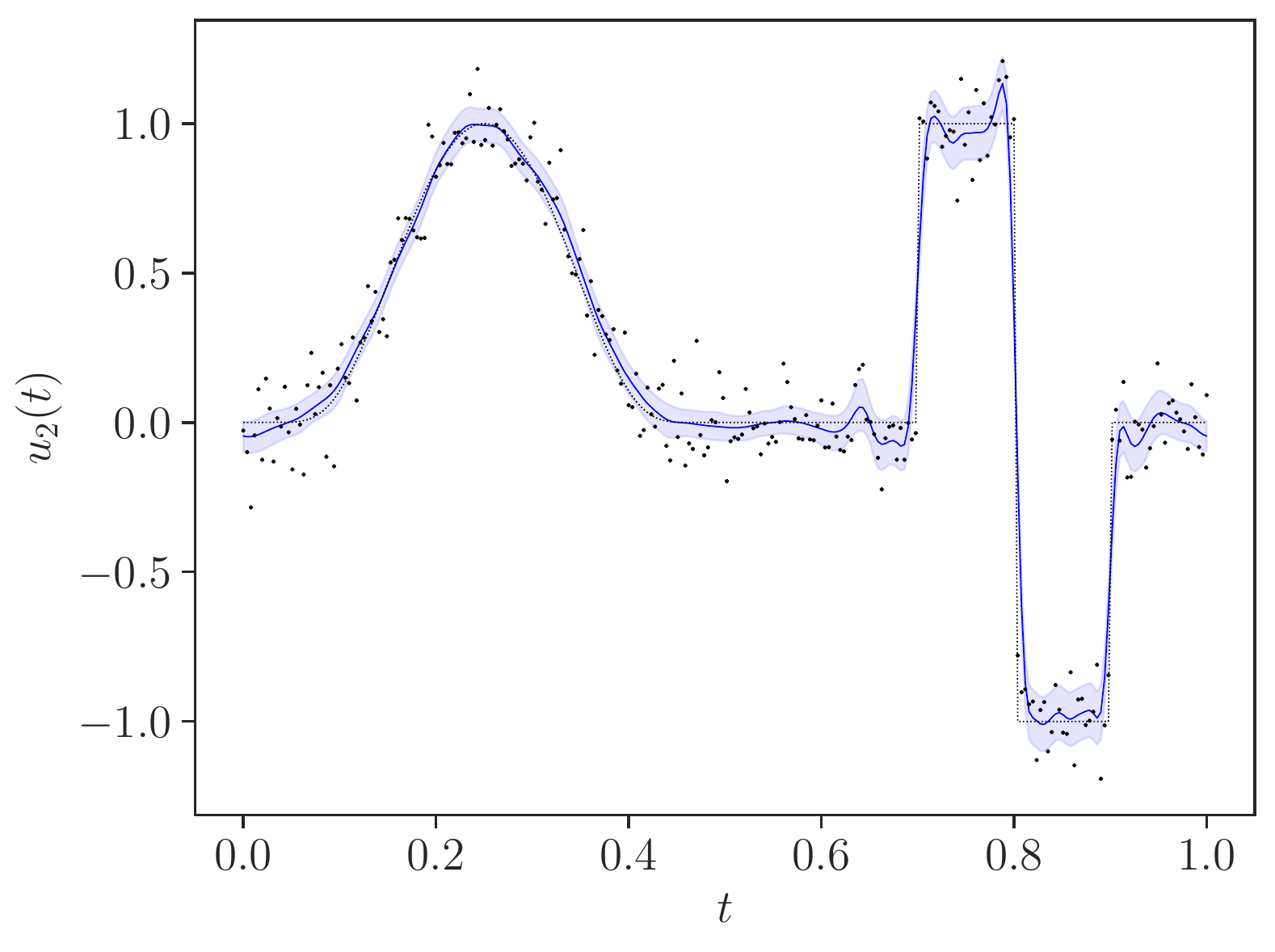}}
			\subfloat[]{\label{fig:vt_2Layer}\includegraphics[width=0.5\linewidth]{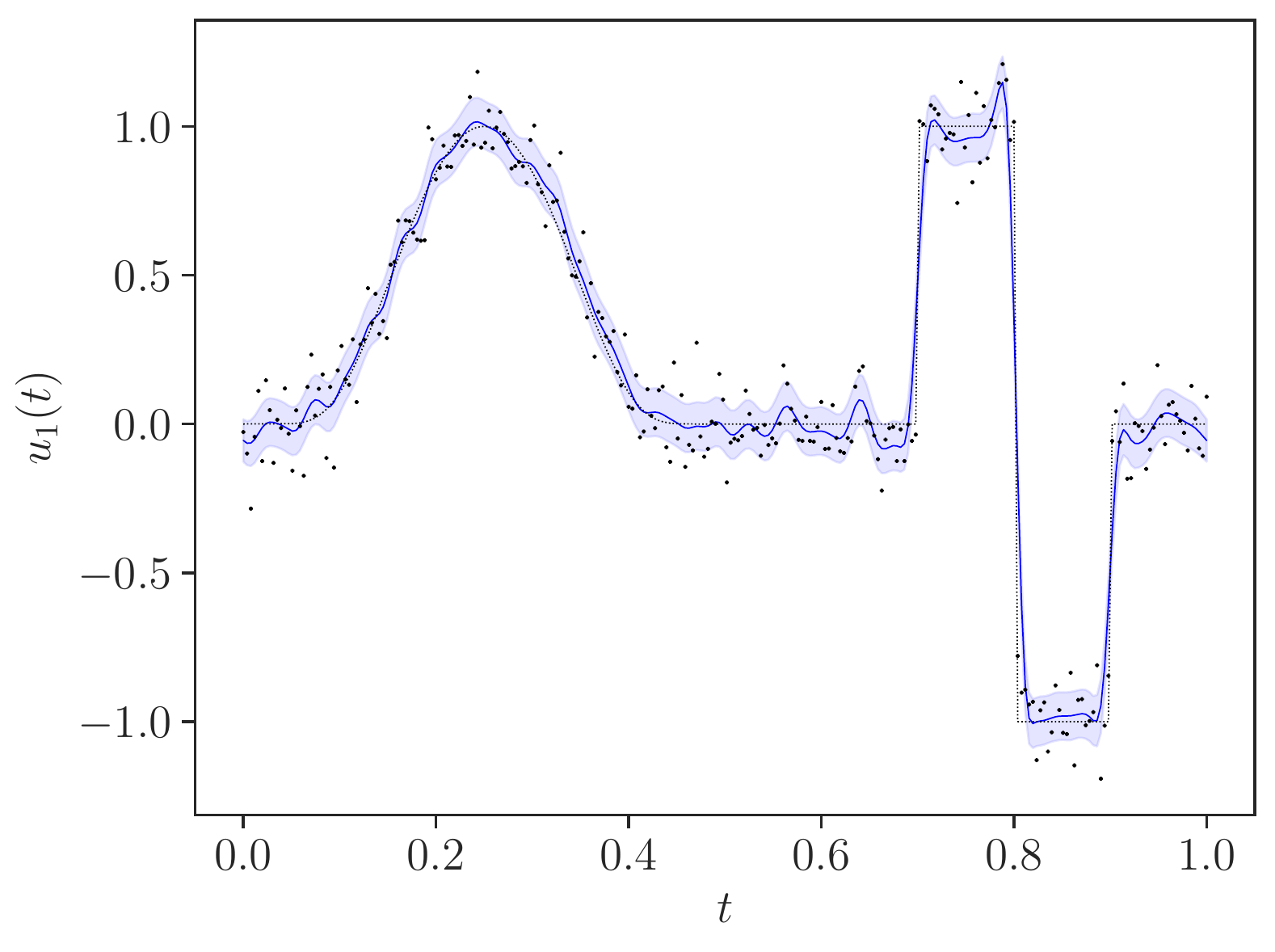}}\\
			\subfloat[]{\label{fig:u_1t_3Layer}\includegraphics[width=0.5\linewidth]{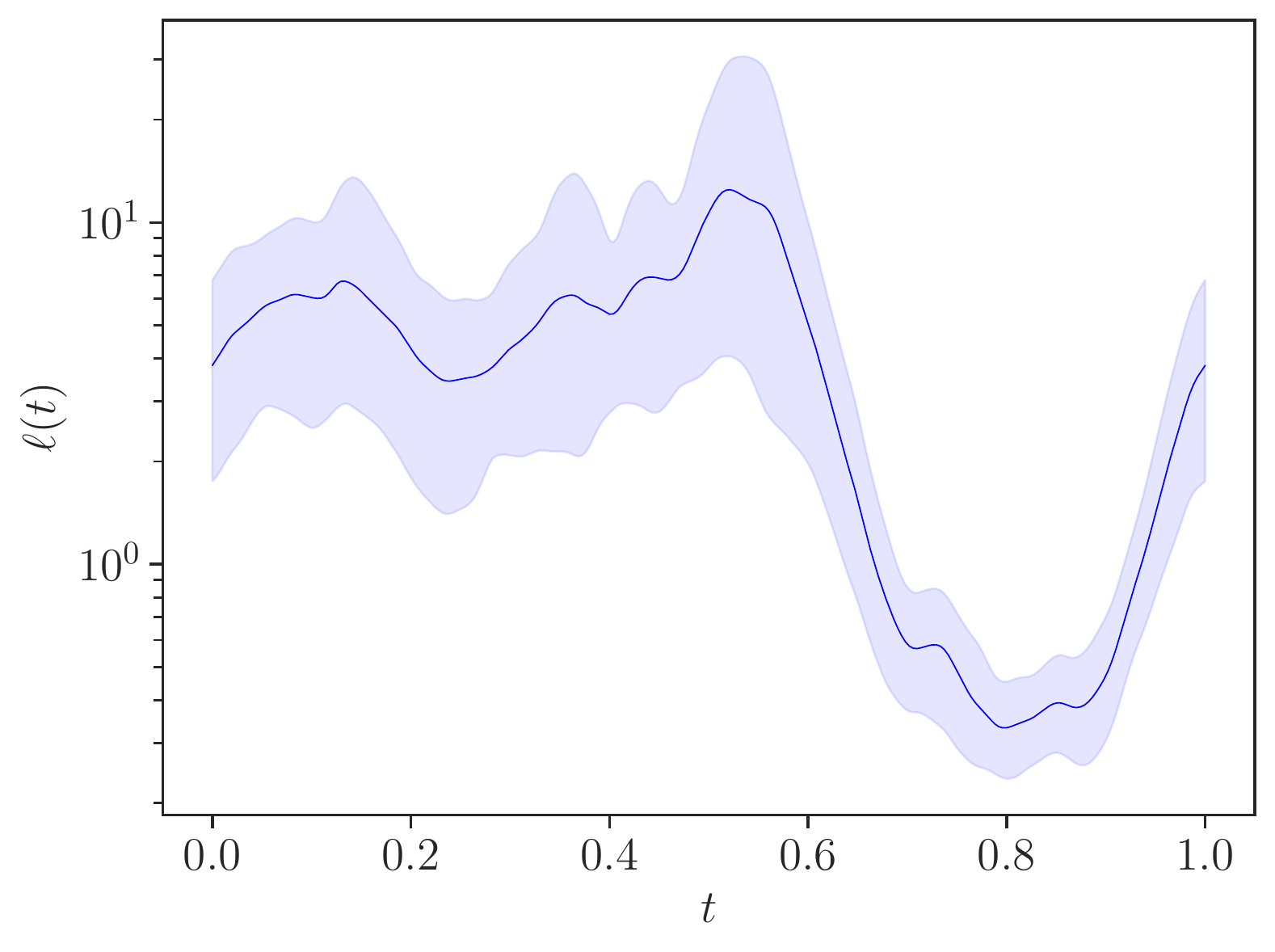}}
			\subfloat[]{\label{fig:u_0t_2Layer}\includegraphics[width=0.5\linewidth]{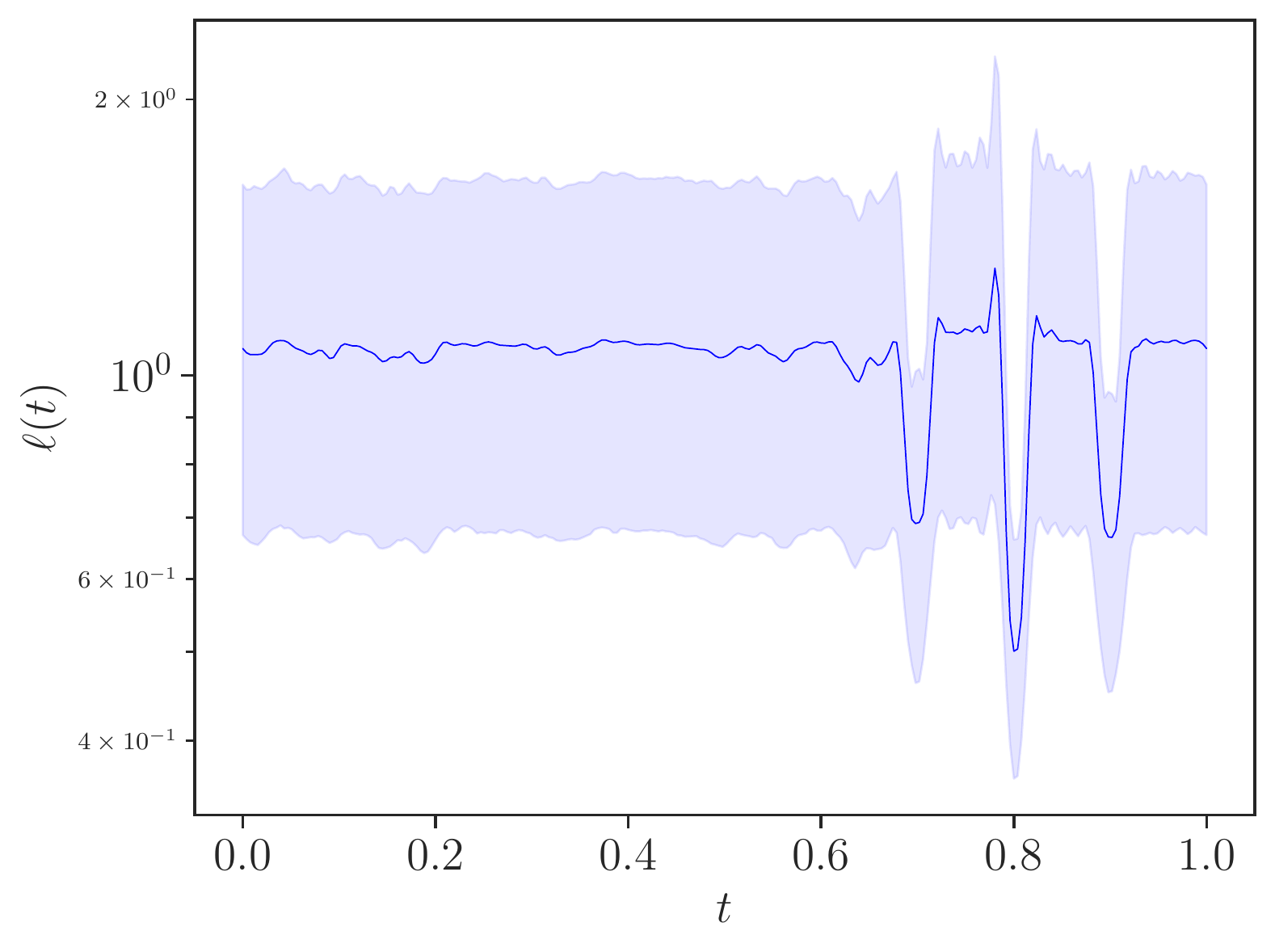}}

			\caption{Similar to \Fref{figs:SimulationResult1D_box}, with the unknown is $v_{bell,rect}$ \label{figs:SimulationResult1D}.}
			\end{figure}

\subsection{X-ray tomography}\label{ssec:Tomography}

In this section, we apply the methods that we have developed to the X-ray tomography reconstruction problem. For the tomography problem, the linear functional is given by a line integration known as the Radon transform \cite{Deans1983,Natterer2001}.%

Assume that the field of interest has support in the a circle with center at $(1/2,1/2)$ and radius equal to half in the two dimensional Euclidean space. Also recall that at a distance $r$ with detection angle $\theta$, we can write the Radon transform of the Fourier basis $\phi_{\mathbf{k}} = \exp\left(i 2 \pi \mathbf{k}^\top \mathbf{x}\right)$ as below
\begin{align*}
	\expval{\phi_{\mathbf{k}},H_{r,\theta}} = \int_{\Omega} \chi(x-\frac{1}{2},y-\frac{1}{2}) \exp\left(i 2 \pi \mathbf{k}^\top \mathbf{x}\right) \delta(r-((x-\frac{1}{2})\cos \theta+(y-\frac{1}{2}) \sin \theta)) d\mathbf{x},
\end{align*}
where $\chi(x,y)$ is an indicator function with support in $(x^2+y^2<\frac{1}{4})$.
Introducing a rotation matrix $\mathbf{R}_{\theta}$, and $\mathbf{p} = [p\;q]^\top$, and $x' = x-\frac{1}{2}$,$y' = y-\frac{1}{2}$, $\mathbf{p} =  	\mathbf{R}_{\theta} \mathbf{x}$
we can write
\begin{align*}
	\expval{\phi_{\mathbf{k}},H_{r,\theta}}
	=& \exp(i \pi (k_x+k_y)) \int_{\Omega} \chi(x',y')  \exp\left(i 2 \pi (\mathbf{R}_{\theta}\mathbf{k})^\top \mathbf{p}\right) \delta(r-p) d\mathbf{p}.
\end{align*}
Using the assumption we have mentioned and $\tilde{\mathbf{k}} =[\tilde{k}_x\;\tilde{k}_y]^\top = \mathbf{R}_{\theta}\mathbf{k}$, we end up with
$
	\expval{\phi_{\mathbf{k}},H_{r,\theta}} 
	 = \exp(i \pi (k_x+k_y)) \exp(i 2\pi \tilde{k}_x r) \frac{1}{\pi \tilde{k}_y}[\sin(2 \pi \tilde{k}_y \sqrt{\frac{1}{4}-r^2} )].
$
Notice when $\tilde{k}_y =0$ we replace the above equation with its limit, that is,
$
	\lim_{\tilde{k}_y \rightarrow 0} \expval{\phi_{\mathbf{k}},H_{r,\theta}} 
	= 2 \sqrt{\frac{1}{4}-r^2} \exp(i \pi (k_x+k_y)) \exp(i 2\pi \tilde{k}_x r).
$
\newcommand\SmallTomographyImage{0.5\linewidth}
\newcommand\PhantomImagePixelLength{511}
\newcommand\DenseTomographySimulationFolder{result-11-Mar-2020_13_35_33}
\newcommand\SPDEDenseNFourier{31}
\newcommand\SPDEDenseNTheta{45}
\newcommand\SPDEDenseNSamples{1000000}
\newcommand\SPDEDenseStepSize{0.018858515}
\FPeval{SPDEDenseLTwo}{round(44.79520552714318:3)}
\FPeval{FBPDenseLTwo}{round(47.15639562030507:3)}
\FPeval{TikhoDenseLTwo}{round(42.145298:3)}
\FPeval{PSNRSPDEDense}{round(22.2461162251573:3)}
\FPeval{PSNRFBPDense}{round(23.398876988985933:3)}
\FPeval{PSNRTikhoDense}{round(21.673435293984053:3)}

\newcommand\SparseTomographySimulationFolder{result-11-Mar-2020_13_46_21}
\newcommand\SPDESparseNFourier{31}
\newcommand\SPDESparseNTheta{18}
\newcommand\SPDESparseNSamples{3000000}
\newcommand\SPDESparseStepSize{0.29353017}
\FPeval{SPDESparseLTwo}{round(124.234237501093:3)}
\FPeval{FBPSparseLTwo}{round(242.01880034542657:3)}
\FPeval{PSNRSPDESparse}{round(18.85348100523729:3)}
\FPeval{PSNRFBPSparse}{round(21.179139070469787:3)}

\begin{table}[!h]
	\centering
	\begin{tabular}{|c|l|l|l|}
	\hline
	  & Shallow-GP & FBP & Tikhonov \\ \hline
	PSNR & \PSNRSPDEDense & \PSNRFBPDense & \PSNRTikhoDense \\ \hline
	$L^2$ Error &\SPDEDenseLTwo & \FBPDenseLTwo & \TikhoDenseLTwo \\ \hline
	\end{tabular} 
	\caption{Quantitative performance comparison of a shallow Gaussian field Bayesian inversion for tomography application in Section \ref{ssec:Tomography}. \label{tbl:MSE_PSNR_Tomography}}
	\end{table}

We modify the MCMC implementation used for the previous one-dimensional example to suit for a GPU architecture. %
For our test comparison in X-ray tomography, the Shepp-Logan phantom with $\PhantomImagePixelLength \times \PhantomImagePixelLength$ resolution is used (see \Fref{fig:target_image_dense_shepp}). We will use this phantom to evaluate the proposed method. 

We take $\SPDEDenseNTheta$ sparsely full projections out of $180$. 
The measurement is corrupted by a white Gaussian noise with standard deviation $0.2$. Due to the restriction in GPU memory, $J $ is set to one. To excel the speed Fourier transform and inverse Fourier transform computations, we use FFT and IFFT routine from CUPY \cite{Okuta2017}. For a performance comparison, we perform the filtered back projection (FBP) on sinogram using iradon routine from Skimage \cite{Walt2014}. A Tikhonov regularization is also used to reconstruct the Fourier coefficients of the unknown field $\mathbf{u}_J$ \cite{Mueller2012}. The Tikhonov regularization parameter $\lambda$ is selected to be $5\times 10^{-2}$ based on the best $L^2$ error and PSNR performance. We set the Fourier basis number $n$ to $\SPDEDenseNFourier$. The total number of parameter for each layer is 1985, which makes the total number of parameters for all layers 3970. The total number of parameters in this example is greatly reduced compared to \cite{suuronen2020enhancing} where each pixel in the target image count as a parameter, that is, for our example it translates to $261121$ parameters.

\Fref{figs:Tomography_dense_Shepp} shows that compared to the FBP and Tikhonov regularization  reconstructions, the posterior sample mean of our MCMC method resulted in an image with less streak artifact and noise. The features of the phantom appear much more clear compared to those on the FBP and Tikhonov reconstructions. As examples, examine the mouth parts, dark circles between eyes and at the forehead, and the two eyes are both relatively much more clear than the other two. Nonetheless, since we set $n$ only $\SPDEDenseNFourier$, the edge of phantom face which has very high values is not fully recovered as much as the FBP reconstruction. The posterior mean produces a lower $L^2$ error ($\SPDEDenseLTwo$) compared to the FBP reconstruction ($\FBPDenseLTwo$), but higher than Tikhonov regularization method, ($\TikhoDenseLTwo$). However, it has a slightly  higher PSNR, $\PSNRSPDEDense$ compared to Tikhonov regularization method, $\PSNRTikhoDense$. Ideally we could double $n$ in our proposed Bayesian method to get a much better reconstruction. However, it is not possible to accomplish this within our current setup due to a restriction on the GPU memory. 
We can fairly conclude that the use of a shallow Gaussian field prior with $n=31$ resulted in a highly reduced amount of artifact at the expense of light blur at the edge. As in the one dimensional example, adding another Gaussian field layer might help to increase the sharpness of the edge in the X-ray tomography application.

\begin{figure}[!h]
	\centering
	\subfloat[]{\label{fig:target_image_dense_shepp}\includegraphics[width=\SmallTomographyImage]{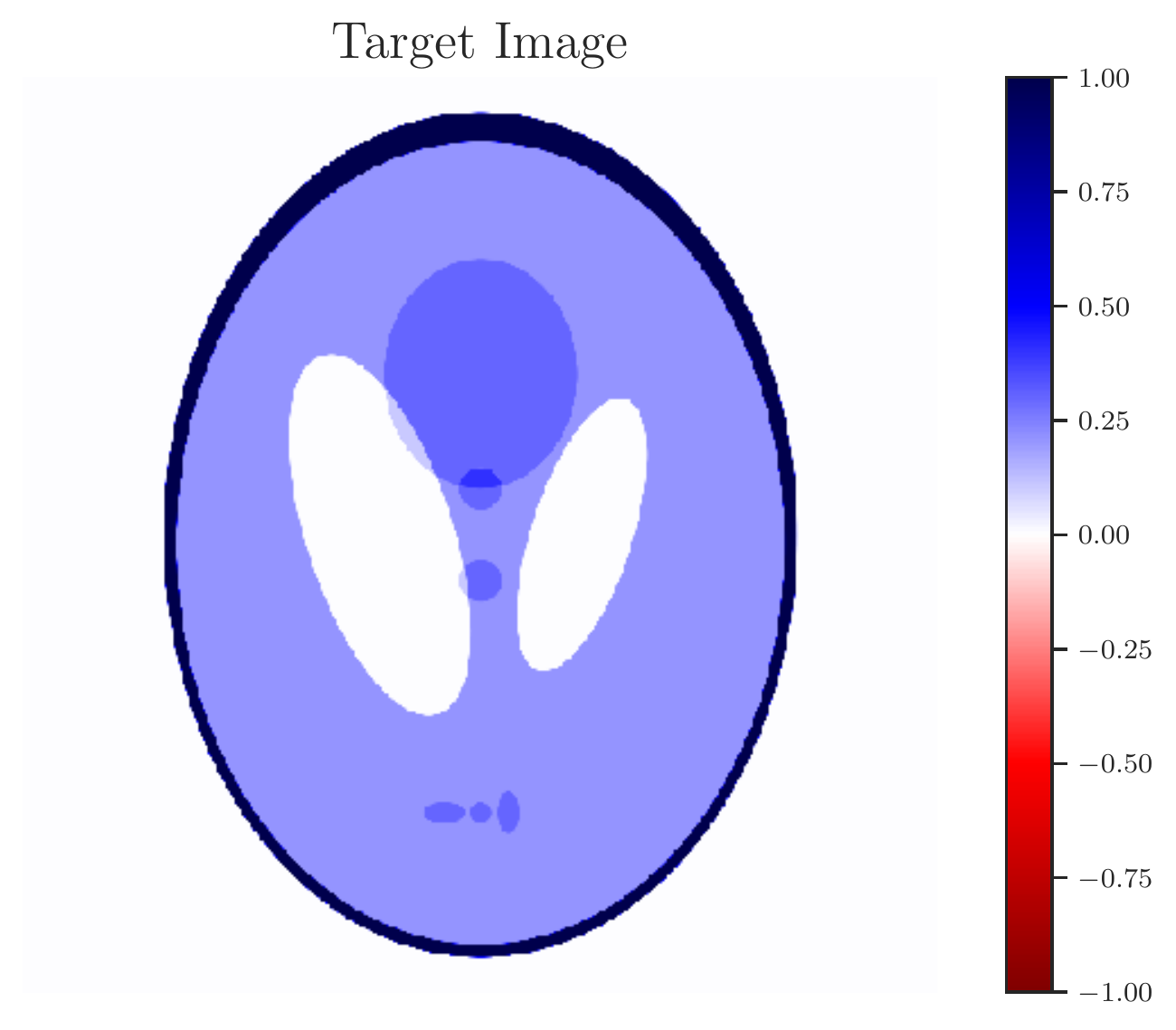}}	
	\subfloat[]{\label{fig:ri_n_dense_shepp}\includegraphics[width=\SmallTomographyImage]{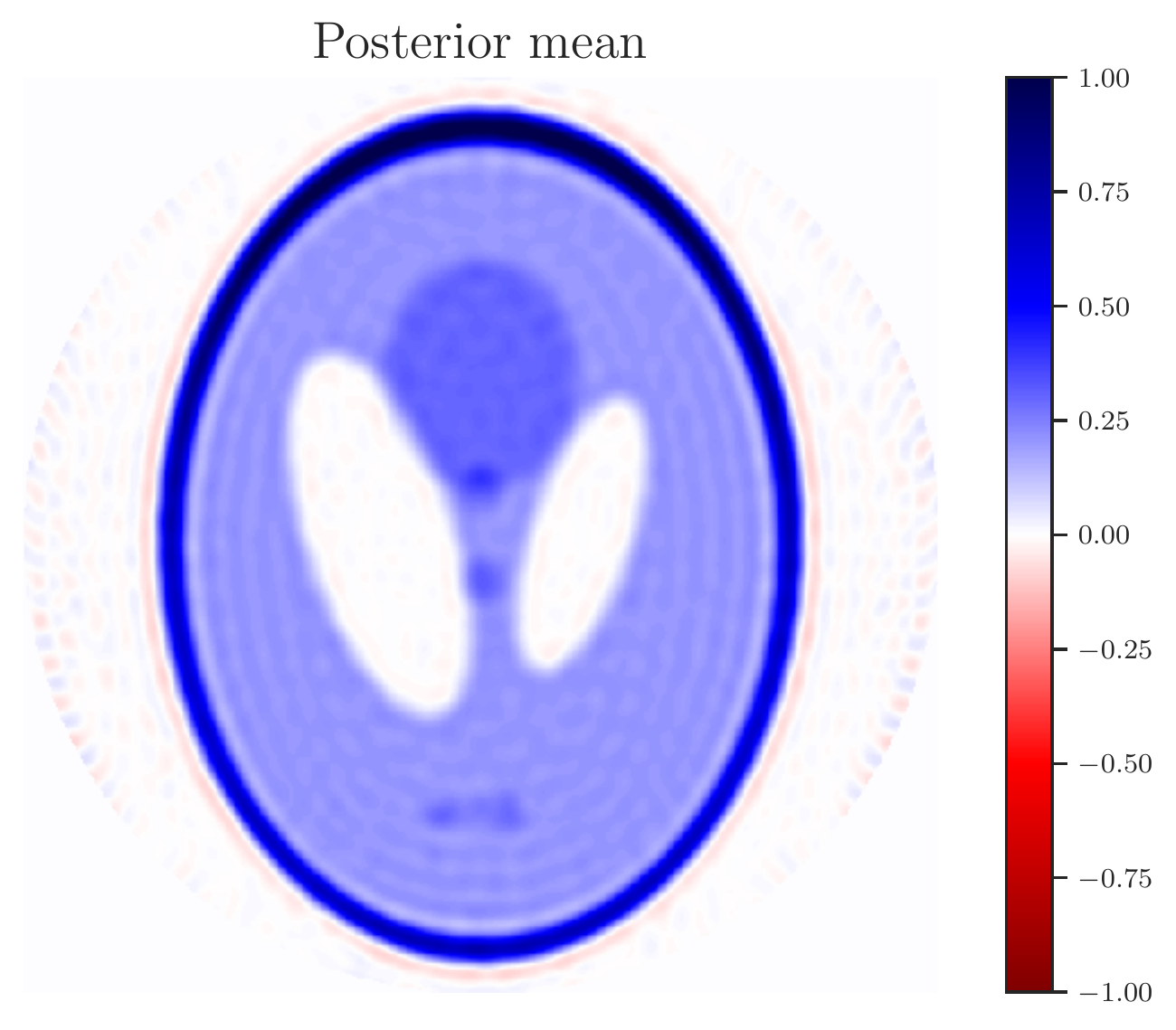}}\\
	\subfloat[]{\label{fig:ri_compare_dense_shepp}\includegraphics[width=\SmallTomographyImage]{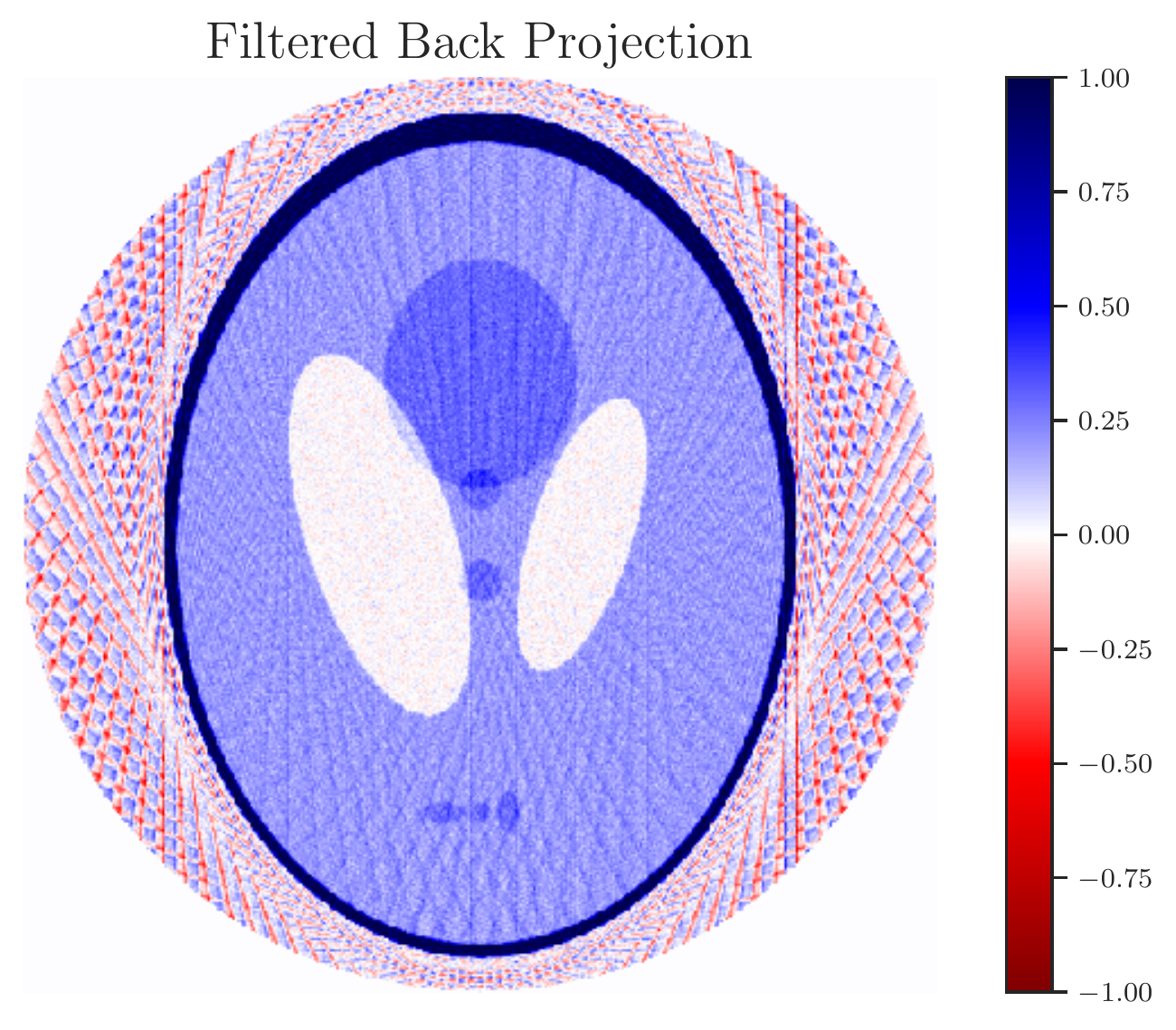}}
	\subfloat[]{\label{fig:ri_Tikhonov_2_dense_shepp}\includegraphics[width=\SmallTomographyImage]{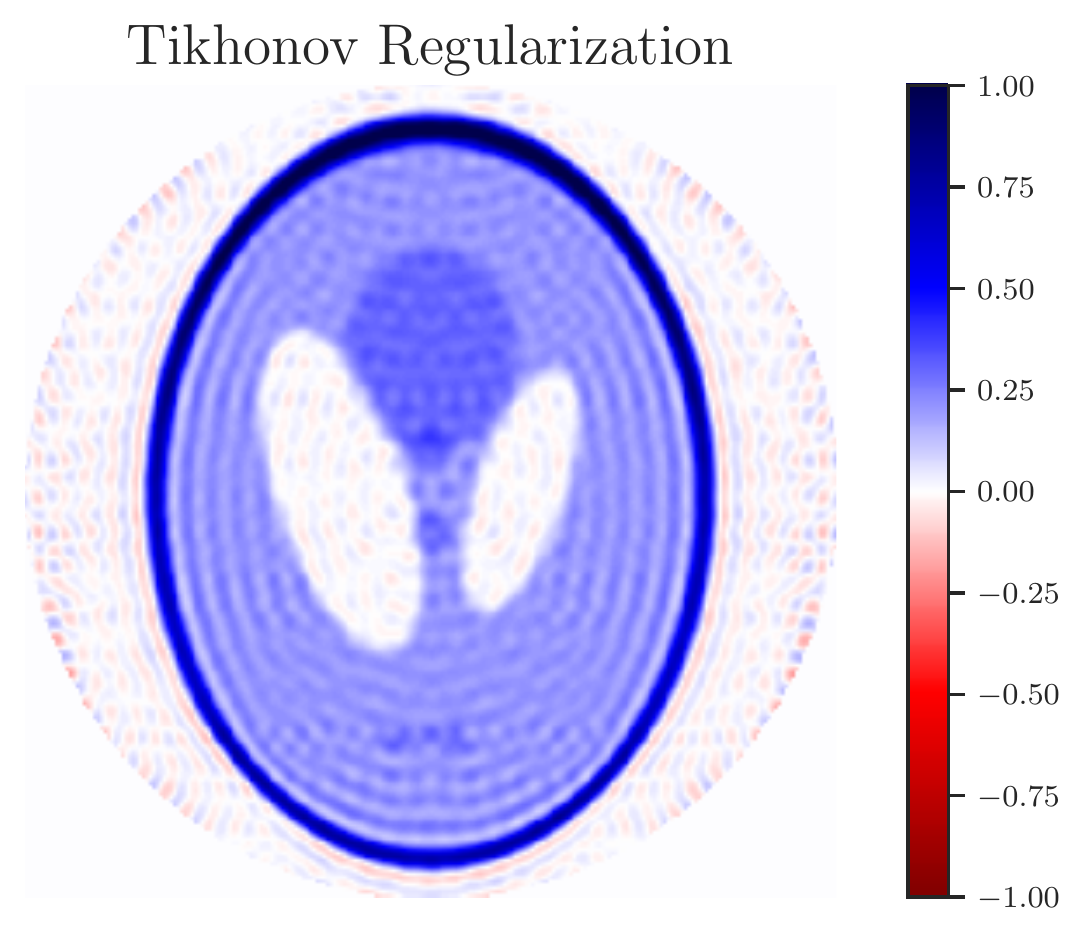}}
	
	\caption{
	Simulation result of a shallow SPDE with $J=1$ and $n = \SPDEDenseNFourier$, where the number of projections is $\SPDEDenseNTheta$. Figures \ref{fig:ri_n_dense_shepp} and \ref{fig:ri_Tikhonov_2_dense_shepp} show posterior means of field $\upsilon$ and the Tikhonov regularization result using $\lambda=5\times 10^{-2}$, while \Fref{fig:ri_compare_dense_shepp} shows the FBP reconstruction. 
	\label{figs:Tomography_dense_Shepp}}
	\end{figure}

\section{Conclusion}\label{sec:conclusions}
We have presented a multi-layered Gaussian-field Bayesian inversion using a Galerkin method. We have also shown that our approach enjoys a nice convergence-in-probability property to the weak solution of the forward model. We also showed that it implies weak convergence of the joint posterior distribution of the Gaussian field. This gives an assurance that our proposed method is well defined and robust upon increasing the number of Fourier basis functions. Using the non-centred version of the preconditioned Crank-Nicolson algorithm, we have shown that for a one-dimensional denoising problem, by using two hyperprior layers we could achieve a smoothing preserving and edge detection of the unknown at the same time. For the X-ray tomography problem, with a single hyper-prior layer and a very small number of Fourier basis ($n = 31$), the posterior sample mean of our proposed approach gives an image with less streak artifact and noise compared to the FBP and Tikhonov regularization reconstructions. Although traces of streak artefact and edge blurring still present, the $L^2$ error and PSNR of our proposed method sit in the middle of those from the FBP and Tikhonov regularization. Furthermore, adding another Gaussian field layer might help to increase the sharpness of the edge in the X-ray tomography application. One of future outlook is to apply the method in real data.

\ack
The authors would like to thank Academy of Finland for financial support.	

\section*{References}
\bibliographystyle{ieeetr}
\bibliography{ReferenceAbbrvBibLatex_UTF8}

\begin{thebibliography}{10}

\bibitem{Kaipio2004}
J.~Kaipio and E.~Somersalo, {\em Statistical and Computational Inverse
  Problems}.
\newblock Springer, 2004.

\bibitem{CarlEdwardRasmussen2005}
C.~E. Rasmussen and C.~K.~I. Williams, {\em Gaussian Processes for Machine
  Learning}.
\newblock MIT Press Ltd, 2005.

\bibitem{Heaton2018}
M.~J. Heaton, A.~Datta, A.~O. Finley, R.~Furrer, J.~Guinness, R.~Guhaniyogi,
  F.~Gerber, R.~B. Gramacy, D.~Hammerling, M.~Katzfuss, F.~Lindgren, D.~W.
  Nychka, F.~Sun, and A.~Zammit-Mangion, ``A case study competition among
  methods for analyzing large spatial data,'' {\em J. Agr. Biol. Envir. St.},
  vol.~24, pp.~398--425, dec 2018.

\bibitem{Paciorek2004}
C.~J. Paciorek and M.~J. Schervish, ``Nonstationary covariance functions for
  {Gaussian} process regression,'' in {\em Adv. Neur. Inf. Proc. Sys.},
  pp.~273--280, 2004.

\bibitem{Fuglstad2015}
G.-A. Fuglstad, D.~Simpson, F.~Lindgren, and H.~Rue, ``Does non-stationary
  spatial data always require non-stationary random fields?,'' {\em Spat.
  Stat.}, vol.~14, pp.~505--531, nov 2015.

\bibitem{Paciorek2003}
C.~J. Paciorek, {\em Non-Stationary {Gaussian} processes for regression and
  spatial modelling}.
\newblock PhD thesis, 2003.

\bibitem{snelson2004warped}
E.~Snelson, Z.~Ghahramani, and C.~E. Rasmussen, ``Warped {Gaussian}
  processes,'' in {\em Adv. Neur. Inf. Proc. Sys.}, pp.~337--344, 2004.

\bibitem{Lindgren2011}
F.~Lindgren, H.~Rue, and J.~Lindstr\"{o}m, ``An explicit link between
  {Gaussian} fields and {Gaussian} {Markov} random fields: the stochastic
  partial differential equation approach,'' {\em J. Royal Stat. Soc. B},
  vol.~73, no.~4, pp.~423--498, 2011.

\bibitem{Roininen2016}
L.~Roininen, M.~Girolami, S.~Lasanen, and M.~Markkanen, ``Hyperpriors for
  {Mat\'{e}rn} fields with applications in {Bayesian} inversion,'' {\em Inverse
  Probl. {\&} Imaging}, vol.~13, no.~1, pp.~1--29, 2019.

\bibitem{Monterrubio_G_mez_2020}
K.~Monterrubio-G{\'{o}}mez, L.~Roininen, S.~Wade, T.~Damoulas, and M.~Girolami,
  ``Posterior inference for sparse hierarchical non-stationary models,'' {\em
  Comp. Stat. {\&} Data Anal.}, p.~106954, mar 2020.

\bibitem{Damianou2013}
A.~C. Damianou and N.~D. Lawrence, ``Deep {Gaussian} processes,'' {\em Artif.
  Intelli. \& Stat.}, 2013.

\bibitem{Duvenaud2014}
D.~Duvenaud, O.~Rippel, R.~P. Adams, and Z.~Ghahramani, ``Avoiding pathologies
  in very deep networks,'' {\em Artif. Intelli. \& Stat.}, 2014.

\bibitem{Dunlop2018}
M.~M. Dunlop, M.~A. Girolami, A.~M. Stuart, and A.~L. Teckentrup, ``How deep
  are deep {Gaussian} processes?,'' {\em J. Mach. Learn. Res.}, vol.~19,
  no.~54, pp.~1--46, 2018.

\bibitem{Stuart2010}
A.~M. Stuart, ``Inverse problems: A {Bayesian} perspective,'' {\em Acta
  Numerica}, vol.~19, pp.~451--559, 2010.

\bibitem{Dashti2017}
M.~Dashti and A.~M. Stuart, ``The {Bayesian} approach to inverse problems,'' in
  {\em Handbook of Uncertainty Quantification}, pp.~311--428, Springer
  International Publishing, 2017.

\bibitem{Cotter2013}
S.~L. Cotter, G.~O. Roberts, A.~M. Stuart, and D.~White, ``{MCMC} methods for
  functions: Modifying old algorithms to make them faster,'' {\em Statist.
  Sci.}, vol.~28, no.~3, pp.~424--446, 2013.

\bibitem{Law2014}
K.~J.~H. Law, ``Proposals which speed up function-space {MCMC},'' {\em J. Comp.
  \& Appl. Math.}, vol.~262, pp.~127--138, 2014.

\bibitem{Beskos2017}
A.~Beskos, M.~Girolami, S.~Lan, P.~E. Farrell, and A.~M. Stuart, ``Geometric
  {MCMC} for infinite-dimensional inverse problems,'' {\em J. Comp. Phys},
  vol.~335, pp.~327--351, 2017.

\bibitem{Rudolf2018}
D.~Rudolf and B.~Sprungk, ``On generalization of the preconditioned
  {Crank-Nicolson Metropolis} algorithm,'' {\em Found. Comp. Math}, vol.~18,
  pp.~309--343, 2018.

\bibitem{Hairer2014}
M.~Hairer, A.~M. Stuart, and S.~J. Vollmer, ``Spectral gaps for a
  {Metropolis-Hastings} algorithm in infinite dimensions,'' {\em Ann. Appl.
  Probab.}, vol.~24, pp.~2455--2490, dec 2014.

\bibitem{Chen2018}
V.~Chen, M.~M. Dunlop, O.~Papaspiliopoulos, and A.~M. Stuart,
  ``Dimension-robust {MCMC} in {Bayesian} inverse problems,''

\bibitem{Papaspiliopoulos2007}
O.~Papaspiliopoulos, G.~O. Roberts, and M.~Sk\"{o}ld, ``A general framework for
  the parametrization of hierarchical models,'' {\em Stat. Sci.}, vol.~22,
  pp.~59--73, feb 2007.

\bibitem{Hu_2017}
Z.~Hu, Z.~Yao, and J.~Li, ``On an adaptive preconditioned {Crank-Nicolson}
  {MCMC} algorithm for infinite dimensional bayesian inference,'' {\em J. Comp.
  Phys.}, vol.~332, pp.~492--503, mar 2017.

\bibitem{Emzir2019}
M.~Emzir, S.~Lasanen, Z.~Purisha, and S.~S\"{a}rkk\"{a}, ``Hilbert-space
  reduced-rank methods for deep {G}aussian processes,'' in {\em Proceeding of
  {IEEE} International Workshop on Machine Learning for Signal Processing
  ({MLSP}).}, 2019.

\bibitem{Solin2019}
A.~Solin and S.~S\"arkk\"a, ``Hilbert space methods for reduced-rank {Gaussian}
  process regression,'' {\em Stat. Comput.}, vol.~30, pp.~419--446, aug 2019.

\bibitem{Lassas2004}
M.~Lassas and S.~Siltanen, ``Can one use total variation prior for
  edge-preserving {B}ayesian inversion?,'' {\em Inverse Problems}, vol.~20,
  pp.~1537--1563, aug 2004.

\bibitem{Kaipio2007}
J.~Kaipio and E.~Somersalo, ``Statistical inverse problems: Discretization,
  model reduction and inverse crimes,'' {\em J. Comp. \& Appl. Math.},
  vol.~198, pp.~493--504, jan 2007.

\bibitem{Natterer2001}
F.~Natterer, {\em The Mathematics of Computerized Tomography (Classics in
  Applied Mathematics)}.
\newblock SIAM: Society for Industrial and Applied Mathematics, 2001.

\bibitem{Tarantola2005}
A.~Tarantola, {\em Inverse Problem Theory and Methods for Model Parameter
  Estimation}.
\newblock Society for Industrial and Applied Mathematics, 2005.

\bibitem{Li2013}
D.~Li, J.~Svensson, H.~Thomsen, F.~Medina, A.~Werner, and R.~Wolf, ``Bayesian
  soft {X-ray} tomography using non-stationary {Gaussian} processes,'' {\em
  Rev. Sci. Inst.}, vol.~84, p.~083506, aug 2013.

\bibitem{Plagemann2008}
C.~Plagemann, K.~Kersting, and W.~Burgard, ``Nonstationary {Gaussian} process
  regression using point estimates of local smoothness,'' in {\em Machine
  Learning and Knowledge Discovery in Databases European Conference, ECML PKDD
  2008 Antwerp, Belgium, September 15-19, 2008 Proceedings, Part II}, 2008.

\bibitem{Purisha2019}
Z.~Purisha, C.~Jidling, N.~Wahlstr\"{o}m, T.~B. Sch\"{o}n, and
  S.~S\"{a}rkk\"{a}, ``Probabilistic approach to limited-data computed
  tomography reconstruction,'' {\em Inverse Probl.}, vol.~35, p.~105004, sep
  2019.

\bibitem{Frese2002}
T.~Frese, C.~Bouman, and K.~Sauer, ``Adaptive wavelet graph model for
  {Bayesian} tomographic reconstruction,'' {\em {IEEE} Trans. Imag. Proc.},
  vol.~11, pp.~756--770, jul 2002.

\bibitem{Chen2011}
Y.~Chen, Y.~Li, W.~Yu, L.~Luo, W.~Chen, and C.~Toumoulin, ``Joint-{MAP}
  tomographic reconstruction with patch similarity based mixture prior model,''
  {\em Multiscale Modeling {\&} Simulation}, vol.~9, pp.~1399--1419, oct 2011.

\bibitem{Antun_2020}
V.~Antun, F.~Renna, C.~Poon, B.~Adcock, and A.~C. Hansen, ``On instabilities of
  deep learning in image reconstruction and the potential costs of {AI},'' {\em
  PNAS}, p.~201907377, may 2020.

\bibitem{Evans2014}
L.~C. Evans, {\em An Introduction to Stochastic Differential Equations}.
\newblock American Mathematical Society, 2014.

\bibitem{Lasanen2018}
S.~Lasanen, L.~Roininen, and J.~M. Huttunen, ``Elliptic boundary value problems
  with {Gaussian }white noise loads,'' {\em Stoch. Proc. \& Appl.}, vol.~128,
  no.~11, pp.~3607--3627, 2018.

\bibitem{Brenner2008}
S.~C. Brenner and L.~R. Scott, {\em The mathematical theory of finite element
  methods}, vol.~15 of {\em Texts in Applied Mathematics}.
\newblock Springer, New York, third~ed., 2008.

\bibitem{Bogachev_Weak}
V.~I. Bogachev, {\em Weak convergence of measures}, vol.~234 of {\em
  Mathematical Surveys and Monographs}.
\newblock American Mathematical Society, Providence, RI, 2018.

\bibitem{Kallenberg}
O.~Kallenberg, {\em Foundations of modern probability}.
\newblock Probability and its Applications (New York), Springer-Verlag, New
  York, second~ed., 2002.

\bibitem{Rudin1976}
W.~Rudin, {\em Principles of Mathematical Analysis}, vol.~39 of {\em
  International series in pure and applied mathematics}.
\newblock McGraw-Hill, 1976.

\bibitem{Marcus2006}
M.~B. Marcus and J.~Rosen, {\em Markov processes, {G}aussian processes, and
  local times}, vol.~100 of {\em Cambridge Studies in Advanced Mathematics}.
\newblock Cambridge University Press, Cambridge, 2006.

\bibitem{Bogachev_Gaussian}
V.~I. Bogachev, {\em Gaussian measures}, vol.~62 of {\em Mathematical Surveys
  and Monographs}.
\newblock American Mathematical Society, Providence, RI, 1998.

\bibitem{Lasanen2012}
S.~Lasanen, ``Non-{G}aussian statistical inverse problems. {P}art {II}:
  {P}osterior convergence for approximated unknowns,'' {\em Inverse Probl.
  Imaging}, vol.~6, no.~2, pp.~267--287, 2012.

\bibitem{dong2017}
K.~Dong, D.~Eriksson, H.~Nickisch, D.~Bindel, and A.~G. Wilson, ``Scalable log
  determinants for {Gaussian} process kernel learning,'' in {\em Advances in
  Neural Information Processing Systems}, pp.~6327--6337, 2017.

\bibitem{Yu2011}
Y.~Yu and X.-L. Meng, ``To center or not to center: That is not the question-
  an ancillarity-sufficiency interweaving strategy ({ASIS}) for boosting {MCMC}
  efficiency,'' {\em J. Comp. \& Graph. Stat.}, vol.~20, pp.~531--570, jan
  2011.

\bibitem{DaniGamerman2006}
H.~F.~L. Dani~Gamerman, {\em Markov Chain Monte Carlo}.
\newblock Taylor \& Francis Inc, 2006.

\bibitem{Cui2016}
T.~Cui, K.~J. Law, and Y.~M. Marzouk, ``Dimension-independent
  likelihood-informed {MCMC},'' {\em J. Comp. Phys.}, vol.~304, pp.~109--137,
  jan 2016.

\bibitem{Deans1983}
S.~R. Deans, {\em The Radon transform and some of its applications}.
\newblock Wiley, 1983.

\bibitem{Okuta2017}
R.~Okuta, Y.~Unno, D.~Nishino, S.~Hido, and C.~Loomis, ``{CuPy}: A
  {NumPy}-compatible library for {NVIDIA} {GPU} calculations,'' in {\em
  Proceeding {MM} '17 Proceedings of the 25th {ACM} international conference on
  Multimedia}, pp.~1217--1220, 2017.

\bibitem{Walt2014}
S.~van~der Walt, J.~L. Sch\"onberger, J.~Nunez-Iglesias, F.~Boulogne, J.~D.
  Warner, N.~Yager, E.~Gouillart, and T.~Yu, ``scikit-image: image processing
  in {P}ython,'' {\em {PeerJ}}, vol.~2, p.~e453, jun 2014.

\bibitem{Mueller2012}
J.~Mueller and S.~Siltanen, {\em Linear and nonlinear inverse problems with
  practical applications}.
\newblock Philadelphia: Society for Industrial and Applied Mathematics, 2012.

\bibitem{suuronen2020enhancing}
J.~Suuronen, M.~Emzir, S.~Lasanen, S.~S{\"a}rkk{\"a}, and L.~Roininen,
  ``Enhancing industrial {X-ray} tomography by data-centric statistical
  methods,'' {\em arXiv preprint arXiv:2003.03814}, 2020.

\end{thebibliography}

\end{document}